\let\OLDthebibliography\thebibliography
\renewcommand\thebibliography[1]{
  \OLDthebibliography{#1}
  \setlength{\parskip}{0pt}
  \setlength{\itemsep}{0pt plus 0.2ex}
}
\tikzset{font=\small}
\tikzset{main node/.style={circle,fill=white,draw,minimum size=0.1cm,inner sep=0pt},}
\newtheorem{theorem}{Theorem} [section]
\newtheorem{lemma}[theorem]{Lemma}
\newtheorem{corollary}[theorem]{Corollary}
\newtheorem{proposition}[theorem]{Proposition}
\theoremstyle{definition}
\newtheorem{definition}[theorem]{Definition}
\newtheorem{remark}[theorem]{Remark}
\newtheorem{example}[theorem]{Example}
\newcommand{\mx}[1]{#1^{\mathfrak{m}}}
\newcommand{\Cay}{\operatorname{Cay}}
\title[The free $F$-restriction semigroups]{The free $F$-restriction semigroups}
\numberwithin{equation}{section}
\def\l@subsection{\@tocline{2}{0pt}{2.5pc}{2.5pc}{}}
\begin{document}

\author{Ganna Kudryavtseva}
\address{G. Kudryavtseva: University of Ljubljana,
Faculty of Mathematics and Physics, Jadranska ulica 19, SI-1000 Ljubljana, Slovenia / Institute of Mathematics, Physics and Mechanics, Jadranska ulica 19, SI-1000 Ljubljana, Slovenia}
\email{ganna.kudryavtseva\symbol{64}fmf.uni-lj.si}
\author{Ajda Lemut Furlani}
\address{A. Lemut Furlani: Institute of Mathematics, Physics and Mechanics, Jadranska ulica 19, SI-1000 Ljubljana, Slovenia/ Faculty of Mathematics and Physics, Jadranska ulica 19, SI-1000 Ljubljana, Slovenia}
\email{ajda.lemut\symbol{64}imfm.si}

\thanks{Ganna Kudryavtseva was partially supported by the ARIS grants  P1-0288 and J1-60025. Ajda Lemut Furlani was supported by the ARIS grant P1-0288.}

\sloppy

\begin{abstract}
We provide a geometric model for the free $X$-generated $F$-restriction semigroup in the extended signature $(\cdot\,, ^+,\mx{},\lambda)$, where the unary operation $\mx{}$  maps an element $a$ to the maximum element $\mx{a}$ of its $\sigma$-class, and the constant $\lambda$ is the unique left identity. This model is based on a certain quotient of the Cayley graph expansion of the free monoid $X^*$ with respect to the extended set of generators $X\cup \overline{X^*}$, where the generators from $\overline{X^*}$ are in a bijection with the free monoid $X^*$ and serve to capture the maximum elements of $\sigma$-classes of the quotient. We also provide models for the free  $X$-generated strong and perfect $F$-restriction semigroups 
in the same extended signature. The constructed models enable us to  solve the word problems for all the free objects under consideration.

\vspace{0.1cm}

{\em Keywords:} Restriction semigroup; $F$-restriction semigroup; Free object; Cayley graph of a monoid; Cayley graph expansion of a monoid; Closure operator; Left ample semigroup; Word problem.
\vspace{0.1cm}

{MSC2020:} 20M05, %Free semigroups, generators and relations, word problems
  08B20, % Free algebras
  20M07, %Varieties and pseudovarieties of semigroups
 20M10. %General structure theory for semigroups
\end{abstract}

\maketitle

\section{Introduction}
The present paper continues the program of research dedicated to the study of $F$-inverse monoids and their non-regular generalizations applying geometric methods. The interest to this topic is primarily motivated by the recent solution of the finite $F$-inverse cover problem \cite{ABO25} and the modification of the construction of the Margolis-Meakin group expansion \cite{MM89} to study $F$-inverse group expansions and their quotients \cite{AKSz21,DK25,KLF24,Sz24} using Cayley graphs of groups.

Restriction and birestriction semigroups\footnote{Restriction semigroups are also referred to as {\em left restriction semigroups} or {\em weakly left $E$-ample semigroups}, while birestriction semigroups are also known as {\em two-sided restriction semigroups} or {\em weakly $E$-ample semigroups}.} are the most well-studied generalizations of inverse semigroups, see, e.g. \cite{CDEGZ23,F91,FGG09,GG00,G05,G96,GG99,GHSz17, GH09,Kud15,Jones16,Kud19,Kud25a,KLF25}.\footnote{For an overview of the early developments of the restriction semigroup theory, we refer the reader to the survey \cite{H09}} and the introduction of \cite{FGG09}. Recent works revealing their connection with  \'etale and bi\'etale topological categories \cite{CG21,Kud25,Kud25a,KL17,MC24} make restriction and birestriction semigroups a part of the active and broad research stream devoted to the study of algebras of \'etale topological groupoids and their generalizations. Because of the importance of partial functions in theoretical computer science, restriction and birestriction categories are an active avenue of research in category theory \cite{CL02,CL24,CG21,HL21}.\footnote{For an overview of the developments in category theory, which led to introducing restriction categories, we refer the reader to \cite{CL02}.}
It is therefore a natural question to investigate the structure of generalizations of $F$-inverse monoids in the class of restriction and birestriction semigroups. 

The study of special classes of $F$-restriction semigroups traces back to the works of Fountain and Gomes \cite{FG90} and of Fountain, Gomes and Gould \cite{FGG09}. $F$-birestriction monoids originated in the works of the first named author \cite{Kud15} and Jones \cite{Jones16}, while the subclass of strong $F$-birestriction monoids was, according to our search results, first considered by Fountain, Gomes and Gould in \cite{FGG09}.\footnote{What we refer to as strong $F$-birestriction monoids in \cite{KLF25} is called {\em $FA$ monoid} in \cite{FGG09}.}
Generalizing the pivotal observation by Kinyon \cite{K18} for $F$-inverse monoids (see \cite{AKSz21}), we observed in our previous paper \cite{KLF25} that $F$-birestriction monoids as well as their strong and perfect analogues form varieties of algebras in the enriched signature $(\cdot\,,^+,\mx{},1)$ where $\mx{a}$ is the maximum element of the $\sigma$-class of $a$. Appropriately adapting the methods of \cite{Kud19, KLF24}, we described the structure of the free objects of these varieties and solved the word problem for them in \cite{KLF25}. The purpose of this paper is to treat similar questions for the free $F$-restriction semigroups.
It is worth highlighting that, unlike $F$-inverse or $F$-birestriction semigroups, $F$-restriction semigroups are not necessarily monoids, but they possess a unique left identity element, $\lambda$, which coincides with the maximum projection (see Proposition \ref{prop:n25} and Example \ref{ex:ex1}). Upon adding $\lambda$ to the signature, we treat $F$-restriction semigroups as algebraic structures in the signature $(\cdot\,, ^+, \mx{}, \lambda)$ of type $(2,1,1,0)$. 

We provide a model for the free $F$-restriction semigroup ${\mathsf{FFR}}(X)$ in the extended signature $(\cdot\,, ^+, \mx{}, \lambda)$, which is based on certain subgraphs of the Cayley graph $\Cay(X^*, X\cup \overline{X^*})$ of the free monoid $X^*$ with respect to the extended set of generators $X\cup \overline{X^*}$, where $\overline{X^*}$ is a disjoint bijective copy of $X^*$.
Our model parallels the model for the free $F$-inverse monoid,  based on the Cayley graph of the free group ${\mathsf{FG}}(X)$ with respect to extended generators $X\cup \overline{{\mathsf{FG}}(X)}$, see \cite{KLF24}. The role of  the Margolis-Meakin group expansion  is taken over by its monoid analogue, which we refer to as the {\em Gould expansion} of a monoid, after Gould who first studied this construction in \cite{G96}. For models of the free objects in some related varieties, we refer the reader to \cite{AKSz21, F91, FGG09, K11, K11a, KLF24, KLF25}.

We show that ${\mathsf{FFR}}(X)$ is canonically isomorphic to the quotient of the Gould expansion $M(X^*,X\cup \overline{X^*})$ of the free monoid $X^*$ with respect to  the generators $X\cup \overline{X^*}$ by the coarsest congruence $\rho$  which makes the quotient an $X$-generated $F$-restriction semigroup with elements of $\overline{X^*}$ being the maximum elements of $\sigma$-classes. We then show that the $\rho$-class of $(A,s) \in M(X^*,X\cup \overline{X^*})$ contains a unique representative $(\Gamma, s)$, where 
$\Gamma$ is a certain $\rho$-closed finite accessible subgraph of $\Cay(X^*, X\cup \overline{X^*})$ which is obtained from $A$ by adding to it finitely many edges using a geometric construction which encodes application of the defining relations of the congruence $\rho$. The model for ${\mathsf{FFR}}(X)$ then consists of all the pairs $(\Gamma, s) \in M(X^*,X\cup \overline{X^*})$ where $\Gamma$ is $\rho$-closed. In the strong case, the model is obtained similarly by an appropriate modification of the congruence $\rho$ and the geometric construction. In the perfect case, we work with the Gould expansion $M(X^*,X\cup \overline{X})$ and, adapting the ideas of \cite{AKSz21}, base our model on finite but not necessarily connected subgraphs of $\Cay(X^*,X\cup \overline{X})$ without isolated vertices.

The paper is organized as follows. 
Section~\ref{sec:prelim} provides the necessary preliminaries.
In Section~\ref{sec:varieties}, we introduce $F$-restriction semigroups and show 
that they form a variety of algebras in the signature $(\cdot\,, ^+, \mx{}, \lambda)$.  We also identify the subvarieties of strong $F$-restriction semigroups and perfect $F$-restriction monoids.
In Section~\ref{sec:F-restr} we provide a model for the free $F$-restriction semigroup ${\mathsf{FFR}}(X)$  (see Propositions~\ref{prop:mod1}~and~\ref{prop:model}) and solve the word problem for it (see Theorem~\ref{thm:word}). 
In Section~\ref{sec:strong} we adapt the approach of the previous section and obtain a model for the free strong $F$-restriction semigroup ${\mathsf{FFR}}_s(X)$ (see  Propositions~\ref{prop:mod2} and \ref{prop:models}) and solve the word problem for it (see Theorem~\ref{thm:words}). Further, in Section~\ref{sec:perfect} we obtain the model and solve the word problem for the free perfect $F$-restriction monoid ${\mathsf{FFR}}_p(X)$, see Theorems~\ref{thm:modelp} and \ref{thm:wordp}. All the word problems are solved in at most cubic time on the complexity of the input terms.
Finally, Section~\ref{sec:ample} shows that ${\mathsf{FFR}}(X)$ and its strong and perfect analogues are left ample (see Proposition~\ref{prop:d4a}). 

\section{Preliminaries}\label{sec:prelim}
\subsection{\texorpdfstring{$X$}{X}-generated algebras}
For undefined notions in universal algebra we refer the reader to \cite{BS81, MMT87}.
By an  {\em algebra} we understand an ordered pair $(A; S)$,
where $A$ is a nonempty set and $S$ is a collection of finitary operations on $A$ (see \cite[Definition 1.1]{MMT87}).
For an algebra $(A; f_1,\ldots,f_n)$, we call $(f_1,\ldots,f_n)$ the {\em signature} of this algebra and $(\rho(f_1),\ldots,\rho(f_n))$, where $\rho(f_i)$ is the arity of $f_i$ for each $i$,  the {\em type} of this algebra.  
Whenever there is no confusion, we abbreviate $(A; S)$ simply by $A$. Morphisms, subalgebras and congruences of algebras are considered with respect to their signature. For clarity, we will sometimes refer to morphisms (resp. congruences) between algebras of signature $(f_1,\ldots,f_n)$ as $(f_1,\ldots,f_n)$-morphisms (resp. $(f_1,\ldots,f_n)$-congruences).
If $\rho$ is a congruence on an algebra $S$, we set $\rho^\natural: S \to S/\rho$ to be the natural quotient morphism.

Let $X$ be a non-empty set and $(A;S)$ an algebra. 
We say that $A$ is {\em $X$-generated} if there exists a map, called the {\em assignment map}, 
$\iota_A:X \to A$,
such that $\iota_A(X)$ generates $A$ in the signature $S$. An {\em $X$-canonical} (or simply {\em canonical} if $X$ is understood) morphism $\varphi:A\to B$ between two $X$-generated algebras of the same type additionally satisfies
$\varphi\circ\iota_A = \iota_B.$
Such a morphism is necessarily surjective and for every two $X$-generated algebras $A$ and $B$ there is at most one canonical morphism between them. The class of all $X$-generated algebras of a given type, together with canonical morphisms, forms a category which admits an initial object, {\em the absolutely free $X$-generated algebra}, $\mathbb{T}(X)$. Elements of $\mathbb{T}(X)$ are usually called {\em terms}. For
each term $t\in \mathbb{T}(X)$, its {\em value} $[t]_A$ (or simply $[t]$ when $A$ is understood) in an algebra $A$ is its image under the canonical morphism $\mathbb{T}(X)\to A$.

We recall the standard notation
$X^+$ and $X^*$ for the free $X$-generated semigroup and the free $X$-generated monoid, respectively. We denote the identity element of $X^*$ by $\lambda$.

\subsection{Restriction semigroups}
The next definition is taken from Gould and Hollings \cite{GH09} and first appeared in  \cite{JS01}.\footnote{In \cite{JS01}, restriction semigroups are called {\em (left) twisted $C$-semigroups}, and in \cite{GH09} they are also called {\em left restriction semigroups}.}
\begin{definition} (Restriction semigroup)
{\em A restriction semigroup} is an algebra $(R; \cdot\,, ^+)$ of type $(2,1)$ such that $(R;\cdot)$ is a semigroup and the following identities hold:
\begin{equation}\label{eq:id_rest}
x^+x=x,  \quad x^+y^+=y^+x^+,  \quad  (x^+y)^+=x^+y^+,  \quad  xy^+=(xy)^+x.
\end{equation}
\end{definition}

Let $R$ be a restriction semigroup. It is readily verified that $(s^+)^2 = s^+$ and  $(s^+)^+=s^+$ for all $s\in R$. The set of {\em projections} $P(R)= \{ s^+ \colon s \in R\}$ 
is closed with respect to the multiplication and is a semilattice where $e\leq f$ if and only if $e = ef=fe$.
The definition implies that the following identity holds in $R$:
\begin{equation}\label{eq:aaa1}
(xy)^+=(xy^+)^+.
\end{equation}
Combining this identity with the last identity of \eqref{eq:id_rest}, we obtain the identity:
\begin{equation}\label{eq:aaa2}
xy^+=(xy^+)^+x.
\end{equation}
The {\em natural partial order} on $R$ is given by $s \leq t$ if and only if there exists $e \in P(R)$ such that $s=et$, or, equivalently, $s = s^+t$. Note that the restriction of this partial order to $P(R)$ coincides with the previously defined partial order. For $s,t,u\in R$ and $s \leq t$ it is easy to see that $su \leq tu$, $us \leq ut$ and $s^+ \leq t^+$.

We say that a restriction semigroup is  {\em reduced}, if it has only one projection. A reduced restriction semigroup is necessarily a monoid, whose identity element, $\lambda$, is its only projection.
The congruence $\sigma$ is defined as the minimum congruence on $R$,
which identifies all the projections. That is, for $s,t\in R$ we have that $s \mathrel{\sigma}t$ if and only if there exists some $e \in P(R)$ such that $es=et$. 
The  quotient $R/\sigma$ is the maximum reduced quotient of $R$. The $\sigma$-class of an element $s \in R$ will be denoted by $[s]_\sigma$.

We will make use of the following observation.

\begin{lemma}\label{lem:n1}
Suppose that $s,t\in R$ have a common upper bound. Then $s\mathrel{\sigma} t$.
\end{lemma}

\begin{proof}
Let $u$ be a common upper bound of $s$ and $t$.
Then $s = s^+u$ and $t=t^+u$. Setting $e=s^+t^+ = t^+s^+$,  we have
$es = t^+s^+s =t^+s^+s^+u = t^+s^+u = eu$ and similarly $et =eu$; thus $es=et$ so that $s\mathrel{\sigma} t$.
\end{proof}

\begin{corollary}\label{cor:n2} If $s,t\in R$ are such that $s\leq t$ then $s\mathrel{\sigma} t$.
\end{corollary}

\begin{definition} (Proper restriction semigroup)
A restriction semigroup is said to be {\em proper} if it satisfies the following condition: if $s^+=t^+$ and $s \mathrel{\sigma} t$ then $s=t$. 
\end{definition}

Proper restriction semigroups generalize $E$-unitary inverse semigroups.

\subsection{Cayley graphs of monoids}\label{subsec:graph}
The {\em Cayley graph} $\Cay(S,X)$ of an $X$-generated monoid $S$ is a labeled directed graph with the vertex set $S$ and the edge set
$$
\{(s, x, s[x]_S)\colon s\in S, \, x\in X\},
$$
where for an edge $e=(s, x, s[x]_S)$ its {\em initial} vertex is  $\alpha(e)=s$, its {\em terminal} vertex is $\omega(e)=s[x]_S$ and its label is $l(e)=x$. An edge whose initial and terminal vertices coincide will be sometimes called a {\em loop}.

A non-empty {\em directed path} in a directed graph $\Gamma$ is a sequence of edges $p=(e_1, \dots, e_n)$, where $n\geq 1$,  such that $\alpha(e_{i+1}) = \omega(e_i)$ for all $i\in \{1,\dots, n-1\}$. 
We set $\alpha(p) = \alpha(e_1)$, $\omega(p) = \omega(e_n)$ and say that $p$ {\em starts} at $\alpha(p)$ and {\em ends} at $\omega(p)$. 
The {\em label} of $p$ is defined as
$l(p) = l(e_1) \cdots l(e_n)$ and the {\em length} of $p$ is the number $n$ of its edges. If $u$ is a vertex, there is the {\em empty path} at $u$ of length $0$, denoted by $\varepsilon_u$. We set $\alpha(\varepsilon_u)=\omega(\varepsilon_u)=u$. By $V(\Gamma)$ and $E(\Gamma)$ we denote the sets of vertices and edges of $\Gamma$.
If $p=( e_1,\dots, e_n)$ and $q=( f_1,\dots, f_k)$ are paths in $\Gamma$ such that $\omega(p) = \alpha(q)$, their {\em concatenation} $pq$ is the path $pq=( e_1,\dots, e_n, f_1,\dots, f_k)$. 

A \emph{subgraph} of $\Cay(S,X)$ is a labeled directed graph  $\Gamma$ satisfying $$V(\Gamma)\subseteq S, \quad E(\Gamma) \subseteq E(\Cay(S,X))$$
and $\alpha(e), \omega(e) \in V(\Gamma)$ for  all $e \in E(\Gamma)$. A vertex $v \in V(\Gamma)$ will be called a {\em sink} if $\alpha(e) \neq v$ for all $e\in E(\Gamma)$.
The subgraph $\langle p\rangle$
{\em spanned} by a non-empty directed path $p = ( e_1, \dots, e_n)$ is defined by
$$
V(\langle p\rangle) = \{\alpha(e_1), \ldots, \alpha(e_n), \omega(e_n)\} \quad {\text{and}} \quad
E(\langle  p  \rangle) = \{e_1, \ldots, e_n \}.
$$
The subgraph spanned by the empty path $\varepsilon_s$ has one vertex, $s$, and no edges.  
For subgraphs $\Gamma_1$ and $\Gamma_2$ of $\Cay(S,X)$, 
their {\em union} $\Gamma_1 \cup \Gamma_2$ is determined by
$V(\Gamma_1 \cup \Gamma_2) = V(\Gamma_1) \cup V(\Gamma_2)$ and 
$E(\Gamma_1 \cup \Gamma_2) = E(\Gamma_1) \cup E(\Gamma_2)$.
If $\Gamma$ is a subgraph of $\Cay(S,X)$ and $s \in S$,  the subgraph $s\Gamma$ of $\Cay(S,X)$ is defined by
$
V(s\Gamma) =\{ st \colon  t\in V(\Gamma)\}$ and $E(s\Gamma) =\{ (su,x,sv)\colon (u,x,v)\in E(\Gamma)\}$.

\subsection{The Gould expansion of a monoid}\label{sec:graph_expansion}
Let $S$ be an $X$-generated monoid, whose identity element is denoted by $\lambda$.\footnote{ We refrained from using the standard notation, $1$, for the identity element of a monoid, since $\lambda$ will also denote the only left identity in $F$-restriction semigroups, and denoting the latter by $1$ would be confusing.} 
We say that a subgraph of $\Cay(S,X)$ is {\em accessible}\footnote{In \cite{GG00,G96,Cornock_phd}, accessible subgraphs are called {\em $1$-rooted subgraphs.} Our terminology is inspired by automata theory.} if it contains the origin and, for each vertex, there exists a directed path from the origin to that vertex. By $\Gamma_{\lambda}$ we denote the subgraph of $\Cay(S,X)$ with only one vertex $\lambda$ and no edges. Let ${\mathcal X}^{\lambda}_{X}$ be the set of all finite accessible subgraphs of $\Cay(S,X)$.  It is a monoid semilattice with the identity element $\Gamma_{\lambda}$ and with $A\leq B$ if and only if $A\supseteq B$.
The meet operation of this semilattice is thus given by the union of subgraphs.
We also set ${\mathcal X}_X = {\mathcal X}^{\lambda}_X\setminus \Gamma_{\lambda}$ to be the subsemilattice of ${\mathcal X}_X^{\lambda}$ obtained by removing its top element $\Gamma_{\lambda}$. 
We define the set
$$
M^{\lambda}(S,X)  = \{(\Gamma,s)\colon \Gamma \in {\mathcal X}^{\lambda}_{X},\,\, s\in {\mathrm{V}}(\Gamma)\}
$$
with the operations $\cdot$ (usually denoted by juxtaposition) and $^+$ given by 
\begin{equation}\label{eq:operations}
(A,s)(B,t) = (A\cup sB,st), \quad (A,s)^+ = (A,\lambda).    
\end{equation}
Then $M^{\lambda}(S,X)$ is a restriction monoid with the identity element $(\Gamma_{\lambda},\lambda)$  (see \cite[Proposition 9.1.2]{Cornock_phd}). The set
$$
M(S,X)=M^{\lambda}(S,X) \setminus \{(\Gamma_{\lambda},\lambda)\}
$$ 
is closed with respect to the $\cdot$ and $^+$ operations and is a $(\cdot\,, ^+)$-subalgebra of $M^{\lambda}(S,X)$.

The construction of $M^{\lambda}(S,X)$ originated in \cite{G96},\footnote{In \cite{G96}, $M^{\lambda}(S,X)$ is denoted by $\mathcal{M}(X,f,S)$, where $S$ is the $X$-generated monoid via the assignment map $f$.} where it was considered for right cancellative monoids. It was then extended to unipotent monoids (that is, monoids which have only  one idempotent) in \cite{GG00} and to arbitrary monoids in \cite{Cornock_phd}. The proof of the following statement for $M^{\lambda}(S,X)$ can be found in \cite[Proposition 3.3]{G96} (for the case where $S$ is right cancellative), see also \cite{GG00,Cornock_phd}. The adaptation to $M(S,X)$ is clear.

\begin{proposition}\label{prop:properties}
Let $(A,s), (B,t)$ be elements of  $M^{\lambda}(S,X)$ or of $M(S,X)$. Then: 
\begin{enumerate}
\item $(A,s)$ is a projection if and only if $s=\lambda$;
\item $(A,s)\leq (B,t)$ if and only if $s=t$ and $A\supseteq B$;
\item $(A,s) \mathrel{\sigma} (B,t)$ if and only if $s=t$;
\item $M^{\lambda}(S,X)$ is a proper restriction monoid and $M(S,X)$ is a proper restriction semigroup.
\end{enumerate}
\end{proposition}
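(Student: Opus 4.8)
The plan is to read all four items directly off the definitions of the two operations in \eqref{eq:operations}, together with the descriptions of projections, of the natural partial order, and of the congruence $\sigma$ recorded in the preliminaries. Since $M^{\lambda}(S,X)$ is already known to be a restriction monoid, no semigroup identities need to be reverified; I only have to identify which pairs $(A,s)$ play each distinguished role. The argument for $M(S,X)$ will be the very same computation, once I verify that the auxiliary elements I construct remain inside $M(S,X)$ rather than landing on the removed identity $(\Gamma_\lambda,\lambda)$.

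For item (1), every projection has the form $(A,s)^+=(A,\lambda)$ by the second formula in \eqref{eq:operations}, so its second coordinate is $\lambda$; conversely, since $\lambda\in V(A)$ for every accessible subgraph $A$, the element $(A,\lambda)$ is defined and satisfies $(A,\lambda)^+=(A,\lambda)$, hence is a projection. For item (2) I would use the characterization $s\le t \iff s=s^+t$ of the natural partial order. Computing $(A,s)^+(B,t)=(A,\lambda)(B,t)=(A\cup \lambda B,\lambda t)$, and noting that $\lambda B=B$ (left translation by the monoid identity fixes all vertices and edges) and $\lambda t=t$, this equals $(A\cup B,t)$. Thus $(A,s)\le (B,t)$ iff $(A,s)=(A\cup B,t)$, which holds precisely when $s=t$ and $A=A\cup B$, i.e. $A\supseteq B$.

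For item (3) I would invoke the definition $s\mathrel\sigma t \iff es=et$ for some projection $e$. If $s=t$, I take $e=(A\cup B,\lambda)$; the point requiring a moment's care is that the union of two finite accessible subgraphs is again finite and accessible (the origin lies in both, and a directed path from the origin to each vertex persists in the union), so $e$ is a legitimate element. A direct computation gives $e(A,s)=(A\cup B,s)=(A\cup B,t)=e(B,t)$, whence $(A,s)\mathrel\sigma(B,t)$. Conversely, if $(C,\lambda)(A,s)=(C,\lambda)(B,t)$ for some projection $(C,\lambda)$, then $(C\cup A,s)=(C\cup B,t)$, and comparing second coordinates gives $s=t$. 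Item (4) then follows at once: if $(A,s)^+=(B,t)^+$ and $(A,s)\mathrel\sigma(B,t)$, the first equality yields $(A,\lambda)=(B,\lambda)$, hence $A=B$, while the second gives $s=t$ by item (3), so $(A,s)=(B,t)$; this is precisely properness.

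The content here is entirely routine, so the only genuine subtlety—the \emph{main obstacle}, such as it is—concerns passing to $M(S,X)$ in item (3): one must check that the witnessing projection $(A\cup B,\lambda)$ does not coincide with the deleted identity $(\Gamma_\lambda,\lambda)$. This is automatic, because any $(\Gamma,s)\in M(S,X)$ satisfies $\Gamma\neq\Gamma_\lambda$ (otherwise $s\in V(\Gamma_\lambda)=\{\lambda\}$ forces $(\Gamma,s)=(\Gamma_\lambda,\lambda)$), so $A\cup B$ strictly contains $\Gamma_\lambda$ and the witness stays in $M(S,X)$; the computations for (1), (2) and (4) transfer verbatim.
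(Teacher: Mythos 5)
Your argument is correct in every detail, including the two points that actually require care: that the witnessing projection $(A\cup B,\lambda)$ in item (3) is a legitimate element (finite accessible union) and that nothing constructed along the way degenerates to the deleted identity $(\Gamma_\lambda,\lambda)$ when one passes to $M(S,X)$. The paper itself gives no proof of this proposition --- it cites Gould's Proposition 3.3 and Cornock's thesis and declares the adaptation to $M(S,X)$ clear --- and your direct verification from the formulas in \eqref{eq:operations} is exactly the routine computation those sources carry out, so there is nothing to criticise.
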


For each $x \in X$, let $\Gamma_x$ denote the subgraph of $\Cay(S,X)$ with $V(\Gamma_x)=\{\lambda,[x]_S\}$ and $E(\Gamma_x)=\{(\lambda,x,[x]_S)\}$, see Figure \ref{fig:aa1}.
\begin{figure}[!ht]
\centering
\begin{tikzpicture}[scale=0.5]
\begin{scope}[every node/.style={circle,fill,inner sep=1.5pt}]
\node[label= {[label distance=-0.7cm]:$\lambda$}] (A) at (0,0) {};
\node[label= {[label distance=-0.9cm]:{$[x]_S$}}] (B) at (5,0) {}; 
\end{scope}

\begin{scope}[>={Stealth[black]}, every node/.style={fill=white,circle,scale=0.7}, every edge/.style={draw, thick}]
\path [->] (A) edge[font=\large,bend left=25] node[above=0.5pt] {$x$} (B);
\end{scope}
\end{tikzpicture}
\caption{The graph $\Gamma_x$.} \label{fig:aa1}
\end{figure}
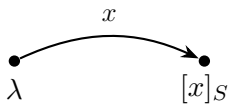

The following is proved as Proposition 9.2.6 of Cornock's  PhD thesis \cite{Cornock_phd}, and its special cases have appeared in  \cite[Proposition 3.4]{G96} and \cite[Proposition 3.4]{GG00}. We include the proof, since it provides a way of representing elements of $M(S,X)$ in terms of generators from (the image of) $X$ which will be useful in the sequel.

\begin{proposition}\label{prop:M_generators}
The restriction semigroup $M(S,X)$ and the restriction monoid $M^{\lambda}(S,X)$ are $X$-generated via the assignment  map  $\iota\colon x\mapsto (\Gamma_x,[x]_S)$. 
\end{proposition}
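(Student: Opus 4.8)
The plan is to write an arbitrary element $(\Gamma,s)\in M(S,X)$ as a product of the generators $(\Gamma_x,[x]_S)$, $x\in X$, and their $^+$-images, thereby exhibiting a normal form that will also serve later. The cornerstone is a path formula: I would prove by induction on $n$ that for every word $x_1\cdots x_n\in X^+$,
\[
(\Gamma_{x_1},[x_1]_S)\cdots(\Gamma_{x_n},[x_n]_S)=(\langle p\rangle,[x_1\cdots x_n]_S),
\]
where $p$ is the directed path at $\lambda$ with label $x_1\cdots x_n$. The inductive step is immediate from the product rule in \eqref{eq:operations}: right-multiplying $(\langle p'\rangle,[x_1\cdots x_{n-1}]_S)$ by $(\Gamma_{x_n},[x_n]_S)$ adjoins precisely the translate $[x_1\cdots x_{n-1}]_S\,\Gamma_{x_n}$, namely the last edge of $p$, while updating the distinguished vertex to $[x_1\cdots x_n]_S$. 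Repetitions of vertices or edges, possible in a general monoid, are absorbed harmlessly by the union.

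Two further consequences of \eqref{eq:operations} would then be recorded. Applying $^+$ yields $(\langle p\rangle,[x_1\cdots x_n]_S)^+=(\langle p\rangle,\lambda)$, so the projection of any spanned path-subgraph onto the origin is generated; and, since $\lambda B=B$ for every subgraph $B$, one has $(A,\lambda)(B,\lambda)=(A\cup B,\lambda)$, so finite products of such projections realise arbitrary finite unions of path-subgraphs distinguished at $\lambda$. The geometric ingredient needed to exploit this is that every accessible subgraph $\Gamma\neq\Gamma_\lambda$ satisfies $\Gamma=\bigcup_{e\in E(\Gamma)}\langle p_e\rangle$, where $p_e$ is a directed path from $\lambda$ to $\alpha(e)$ inside $\Gamma$ extended by $e$; such a path exists by accessibility of $\Gamma$. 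This union visibly contains every edge of $\Gamma$, and also every vertex, since each non-origin vertex is the terminus of some edge and $\lambda$ begins every $p_e$.

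To finish, I would pick a directed path $q$ from $\lambda$ to $s$ inside $\Gamma$ and compute
\[
\Big(\prod_{e\in E(\Gamma)}(\langle p_e\rangle,\lambda)\Big)(\langle q\rangle,s)=\Big(\bigcup_{e\in E(\Gamma)}\langle p_e\rangle\cup\langle q\rangle,\,s\Big)=(\Gamma,s),
\]
using $\langle q\rangle\subseteq\Gamma$; when $s=\lambda$ the last factor is dropped and the product of projections already delivers $(\Gamma,\lambda)$. By the first two paragraphs every factor is a word in the generators and their $^+$-images, so $(\Gamma,s)$ lies in the subalgebra generated by $\iota(X)$. For $M^\lambda(S,X)$ the only further element is the identity $(\Gamma_\lambda,\lambda)$, which belongs to any $X$-generated submonoid by definition. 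I expect the principal difficulty to be bookkeeping rather than ideas: one must check that the path decomposition reproduces $\Gamma$ exactly, and, above all, that the distinguished vertex is propagated correctly through each product, where the left translation by the first factor's vertex is exactly what makes consecutive edges match up.
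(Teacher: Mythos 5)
Your proposal is correct and follows essentially the same route as the paper: decompose $A$ as the union of the path subgraphs $\langle p_e\rangle$ over the edges $e$ of $A$, realise each $(\langle p_e\rangle,\lambda)$ as the $^+$ of a product of generators via the path formula, take the product of these projections to get $(A,\lambda)$, and then right-multiply by a path from $\lambda$ to $s$. The only difference is presentational — you isolate the path formula as an explicit induction, whereas the paper states it directly in equation \eqref{eq:aaa5} — so there is nothing to add.
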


\begin{proof}
Let $(A,s)\in M(S,X)$. We first aim to express $(A,\lambda)$ via the generators $\iota(X)$
using the operations $\cdot$ and $^+$.  Suppose that $e=(u,x,u[x]_S)$ is an edge of $A$. Since $A$ is accessible, there is a directed path $(e_1,\dots, e_n)$ (possibly empty) from $\lambda$ to $u$. Let $x_1\cdots x_n\in X^*$ be the label of this path. Setting $p_e = (e_1,\dots, e_n,e)$,
the definition of the operations in $M(S,X)$ implies that:
\begin{equation}\label{eq:aaa5} (\langle p_e \rangle,\lambda) = 
\left((\Gamma_{x_1},[x_1]_S)\cdots ( \Gamma_{x_n}, [x_n]_S)(\Gamma_{x},[x]_S)\right)^+.
\end{equation}
Since 
$A= \bigcup_{e\in E(A)} \langle p_e\rangle$, it follows that 
\begin{equation}\label{eq:n26c}
(A,\lambda)= \prod_{e\in E(A)} (\langle p_e\rangle,\lambda).    
\end{equation}
If $s=\lambda$, we are done. Otherwise, let $y_1\cdots y_m \in X^+$ be the label of a directed path from $\lambda$ to $s$. Then we have:
\begin{equation}\label{eq:n26d}
(A,s) = (A,\lambda)( \Gamma_{y_1},[y_1]_S)\cdots ( \Gamma_{y_m},[y_m]_S),    
\end{equation}
which completes the proof of the claim for $M(S,X)$. For $M^{\lambda}(S,X)$ we  additionally consider the element $(\Gamma_{\lambda},\lambda)$. But this element is the identity element, so it is given by the nullary operation of  $M^{\lambda}(S,X)$.
\end{proof}

A special case of the following result for $M^{\lambda}(S,X)$, where $\nu$ is the identity morphism, was proved in \cite[Proposition 9.2.7]{Cornock_phd}, see also \cite[Theorem 4.2]{G96} and \cite[Proposition 3.5]{GG00}. We omit the proof as it requires minimal adjustment of the proof given in \cite[Proposition 9.2.7]{Cornock_phd} and \cite[Theorem 4.2]{G96}.

\begin{theorem} [Universal property of $M^{\lambda}(S,X)$ and $M(S,X)$.] \label{thm:universal_property_M}
Suppose $S$ is an $X$-generated monoid.
\begin{enumerate}
\item Let $R$ be an $X$-generated proper restriction monoid such that there is a canonical $(\cdot\,, ^+,\lambda)$-morphism $\nu : S \to R/\sigma$. Then there is a canonical $(\cdot\,,^+,\lambda)$-morphism  $\varphi: M \to R$ where $M=M^{\lambda}(S,X)$ such that the following diagram commutes:
$$
\begin{tikzcd}
M & R\\
S & {R/\sigma}
\arrow["\varphi", from=1-1, to=1-2]
\arrow[from=1-1, to=2-1]
\arrow[from=1-2, to=2-2]
\arrow["\nu"', from=2-1, to=2-2]\,\,.
\end{tikzcd}
$$
\item Let $R$ be an $X$-generated proper restriction semigroup such that there is a canonical $(\cdot\,, ^+)$-morphism $\nu : S \to R/\sigma$. Then there is a canonical $(\cdot\,,^+)$-morphism  $\varphi: M \to R$ where $M=M(S,X)$ such that the 
diagram above commutes. 
\end{enumerate}
\end{theorem}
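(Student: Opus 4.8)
\section*{Proof proposal}

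The plan is to construct $\varphi$ through the absolutely free algebra. Write $\pi_M\colon \mathbb{T}(X)\to M$ and $\pi_R\colon \mathbb{T}(X)\to R$ for the canonical morphisms of the relevant type ($(\cdot\,,^+,\lambda)$ in part (1), $(\cdot\,,^+)$ in part (2)). A canonical morphism $\varphi\colon M\to R$ with $\varphi\circ\pi_M=\pi_R$ exists exactly when $\ker\pi_M\subseteq\ker\pi_R$, and when it exists it is automatically unique and surjective; so the whole theorem reduces to the implication
\[
[t_1]_M=[t_2]_M \ \Longrightarrow\ [t_1]_R=[t_2]_R \qquad (t_1,t_2\in\mathbb{T}(X)).
\]
For a term $t$ I write $[t]_M=(A_t,s_t)$, where the accessible subgraph $A_t$ and the vertex $s_t$ (the value of $t$ in $S$, regarded as a reduced restriction monoid with $^+$ read as $\lambda$) are computed recursively from \eqref{eq:operations}: a generator $x$ gives $(\Gamma_x,[x]_S)$, a product gives $A_{t_1t_2}=A_{t_1}\cup s_{t_1}A_{t_2}$ and $s_{t_1t_2}=s_{t_1}s_{t_2}$, and $^+$ gives $A_{t_1^+}=A_{t_1}$, $s_{t_1^+}=\lambda$. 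By Proposition~\ref{prop:properties}(2) the hypothesis $[t_1]_M=[t_2]_M$ is equivalent to $A_{t_1}=A_{t_2}$ together with $s_{t_1}=s_{t_2}$.

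Since $R$ is proper, $[t_1]_R=[t_2]_R$ will follow once I verify the two conditions defining properness: that $[t_1]_R\mathrel{\sigma}[t_2]_R$ and that $([t_1]_R)^+=([t_2]_R)^+$. The first is the easy half: the value of $t$ in $R/\sigma$ is $\sigma^\natural([t]_R)$, and since $\nu$ is canonical and factors the reading of terms through $S$, this value equals $\nu(s_t)$; as $s_{t_1}=s_{t_2}$ we obtain $[t_1]_R\mathrel{\sigma}[t_2]_R$. The very same computation, evaluated on the generator $\iota(x)=(\Gamma_x,[x]_S)$, shows that the diagram commutes, since both composites send $x$ to $\iota_{R/\sigma}(x)$; as all maps are morphisms between $X$-generated algebras, agreement on generators suffices.

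The heart of the matter is the second condition: I must show that $([t]_R)^+$ depends only on the subgraph $A_t$. To this end I would introduce a reading map $\Phi$ from finite accessible subgraphs to $P(R)$, sending $A$ to the product, over a spanning tree of paths $p_e$ from $\lambda$ to the edges $e\in E(A)$ as in \eqref{eq:aaa5}, of the projections $([l(p_e)]_R)^+$, and then prove by induction on $t$ that
\[
([t]_R)^+=\Phi(A_t).
\]
The generator and $^+$ steps are immediate, and the product step peels off the $s_{t_1}A_{t_2}$ summand of $A_{t_1t_2}=A_{t_1}\cup s_{t_1}A_{t_2}$ using the identities \eqref{eq:aaa1} and \eqref{eq:aaa2}. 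The main obstacle is that $\Phi(A)$ must first be shown independent of the chosen spanning tree; the key observation driving both this independence and the inductive identity is that along any directed path from $\lambda$ the prefix projections $([x_1\cdots x_i]_R)^+$ form a \emph{descending chain} in the semilattice $P(R)$, so that a product of such projections collapses to the one indexed by the longest prefix. This descending-chain phenomenon is exactly what forces two different path systems to yield the same element of $P(R)$.

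Granting $([t]_R)^+=\Phi(A_t)$, the equality $A_{t_1}=A_{t_2}$ gives $([t_1]_R)^+=([t_2]_R)^+$, and properness of $R$ then yields $[t_1]_R=[t_2]_R$, which produces $\varphi$ and completes part (2). Parts (1) and (2) run in parallel: in the monoid case $\varphi(\Gamma_\lambda,\lambda)$ is the identity of $R$ and the nullary operation $\lambda$ is respected, while for $M(S,X)$ the identity element is absent but the same recursion and the same reading map $\Phi$ apply verbatim on the subalgebra, so no separate argument is needed.
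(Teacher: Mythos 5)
The paper does not actually prove this theorem: it is imported from \cite{G96} and \cite{Cornock_phd}, so your attempt can only be measured against the standard argument, whose architecture you have correctly reproduced — reduce to $\ker\pi_M\subseteq\ker\pi_R$, use properness of $R$ to split the required implication into a $\sigma$-component (which you handle correctly via $\nu$, and which also gives commutativity of the diagram) and a projection component, and for the latter show that $([t]_R)^+$ depends only on the graph $A_t$ by reading $A_t$ into $P(R)$ through a system of paths as in \eqref{eq:aaa5}.

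The gap is in your justification of the crucial step, the independence of $\Phi(A)$ from the chosen path system. The descending-chain observation only compares projections read along a \emph{single} path, i.e.\ projections of words one of which is a prefix of the other. But the whole point of allowing a general $X$-generated monoid $S$ is that $\Cay(S,X)$ need not be a tree: two distinct directed paths from $\lambda$ ending at the same edge have labels $w,w'$ with $[w]_S=[w']_S$, yet $[w]_R$ and $[w']_R$ are merely $\sigma$-related and the projections $([w]_R)^+$, $([w']_R)^+$ are in general incomparable, so no chain collapse relates them. Concretely, let $S$ be the free commutative monoid on $\{a,b,c\}$ and let $A$ have the edges $(\lambda,a,a)$, $(\lambda,b,b)$, $(a,b,ab)$, $(b,a,ab)$, $(ab,c,abc)$; routing the last edge through $a$ or through $b$ gives, after the descending-chain collapse, the two products $([ba]_R)^+([abc]_R)^+$ and $([ab]_R)^+([bac]_R)^+$, and their equality is \emph{not} a consequence of any chain condition — it follows only from properness of $R$, via the lemma that $u\mathrel{\sigma}v$ implies $u^+v^+u=u^+v^+v$ (both sides are $\sigma$-related and have $^+$ equal to $u^+v^+$), whence $v^+(uw)^+=(u^+v^+uw)^+=(u^+v^+vw)^+=u^+(vw)^+$. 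So properness is needed already to make $\Phi$ well defined, not only at the very end where you invoke it; as written, the central identity $([t]_R)^+=\Phi(A_t)$ is not established. Once you insert this lemma and use it to prove path-independence (and to handle the analogous reconciliation in the product step of your induction, where the value $[t_1]_R$ must be compared with the reading of a path to $s_{t_1}$), the rest of your plan goes through.
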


We refer to the restriction monoid $M^{\lambda}(S,X)$ and the restriction semigroup $M(S,X)$ as   the {\em Gould expansions} of $S$, after Gould, who introduced  $M^{\lambda}(S,X)$ and proved its universal property in \cite{G96}.

In what follows we sometimes identify elements of $X$ with their images in $M(S,X)$ or in $M^{\lambda}(S,X)$. For example, saying that $x$ is an element $M(S,X)$ we mean its image $(\Gamma_x,[x]_S)$ under the assignment map $\iota$.

\section{Varieties of \texorpdfstring{$F$}{F}-restriction semigroups}\label{sec:varieties}
We now define the central objects of our study, $F$-restriction semigroups. Special cases of this definition have appeared in \cite{FG90, FGG99, FGG09}.
 
\begin{definition}($F$-restriction semigroup)
A restriction semigroup $R$ will be called {\em $F$-restriction} if every its $\sigma$-class contains a maximum element with respect to the natural partial order. We denote the maximum element of the $\sigma$-class of $s\in R$ by $\mx{s}$.
\end{definition}

Part (1) of the following proposition is similar to \cite[Lemma 5]{Kud15} for $F$-birestriction monoids.

\begin{proposition} \label{prop:n25}
Let $R$ be an $F$-restriction semigroup. Then:
\begin{enumerate}
\item $R$ is proper;
\item  the maximum projection in $R$, denoted by $\lambda$, is the unique left identity in $R$.
\end{enumerate}
\end{proposition}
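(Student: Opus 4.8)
The plan is to prove both parts by leveraging the definition of an $F$-restriction semigroup together with the characterizations of $\sigma$ and the natural partial order recalled in the preliminaries. For part (1), I want to show that if $s^+ = t^+$ and $s \mathrel{\sigma} t$, then $s = t$. Since $s \mathrel{\sigma} t$, both $s$ and $t$ lie in the same $\sigma$-class, which by the $F$-restriction hypothesis has a maximum element $m = \mx{s} = \mx{t}$. Thus $s \leq m$ and $t \leq m$, so there are projections $e, f$ with $s = em$ and $t = fm$. Taking $^+$ and using that $^+$ is order-preserving together with the identities in \eqref{eq:id_rest}, I would compute $s^+ = (em)^+ = e m^+$ and similarly $t^+ = f m^+$ (using $(em)^+ = e^+(m)^+ = e m^+$ since $e$ is a projection). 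From $s^+ = t^+$ I then get $e m^+ = f m^+$, and I expect this to force $s = e m = e m^+ m = f m^+ m = f m = t$, where the middle equalities use $m^+ m = m$. This is the routine-calculation part and should go through cleanly.

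For part (2), I first need to identify a maximum projection. Since $P(R)$ is a semilattice under multiplication with $e \leq f$ iff $e = ef = fe$, and since all projections are $\sigma$-related (they are identified by $\sigma$), they all lie in a single $\sigma$-class; by the $F$-restriction property this class has a maximum element with respect to the natural partial order, and I would argue this maximum is itself a projection (applying $\mx{}$ and the fact that $s^+ \leq t^+$ when $s \leq t$, so the maximum of the class of projections dominates every projection and, being in that class, equals $\mx{\lambda}$). Call it $\lambda$. Then $\lambda$ is the top of the semilattice $P(R)$, so $\lambda e = e \lambda = e$ for every projection $e$. To see $\lambda$ is a left identity on all of $R$, take arbitrary $s \in R$ and use $s^+ s = s$ together with $\lambda s^+ = s^+$: I would show $\lambda s = \lambda s^+ s = s^+ s = s$, possibly routing through identity \eqref{eq:aaa2} or the relation $xy^+ = (xy)^+x$ to handle the multiplication correctly.

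Finally, for uniqueness of the left identity, suppose $\ell$ is any left identity, so $\ell s = s$ for all $s$. Applying this with $s = \lambda$ gives $\ell \lambda = \lambda$; combined with $\lambda \ell = \ell$ (since $\lambda$ is a left identity) this would let me compare $\ell$ and $\lambda$. The cleanest route is probably to observe that a left identity must itself be a projection: from $\ell \ell = \ell$ we get $\ell$ idempotent, and I would argue $\ell^+ = \ell$ using the restriction identities, placing $\ell$ in $P(R)$; then since $\lambda$ is the top projection and $\ell$ is a left identity, $\lambda = \ell \lambda$ forces $\ell = \lambda$ by comparing with the semilattice order.

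I expect the main obstacle to be part (2): specifically, rigorously justifying that the maximum element of the $\sigma$-class of projections is again a projection, and then pinning down that a general left identity is forced to be this particular projection. The passage from "top of the projection semilattice acts as identity on projections" to "acts as a left identity on all of $R$" requires care with the asymmetric restriction identities (the paper only gives left-handed axioms), so I would be attentive to using $x^+ x = x$ and $xy^+ = (xy)^+ x$ in the correct order rather than assuming any right-sided behaviour.
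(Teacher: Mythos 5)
Part (1) is correct and is essentially the paper's argument (the paper writes it more compactly as $s=s^+\mx{s}=t^+\mx{t}=t$, using the characterization $s\le t\Leftrightarrow s=s^+t$). Your identification of $\lambda$ and the proof that it is a left identity are also sound: the computation $\lambda s=\lambda s^+s=s^+s=s$ matches the paper's $s=s^+s\le\lambda s\le s$. The claim that the maximum $m$ of the $\sigma$-class containing $P(R)$ is itself a projection is true, but your justification is vague; the clean way is to note that $m^+$ is a projection, hence lies in that class, hence $m^+\le m$, which reads $m^+=(m^+)^+m=m^+m=m$ and forces $m=m^+$. (The paper instead proves the stronger statement that the $\sigma$-class of a projection is exactly $P(R)$, using properness from part (1), and it needs this stronger fact later.)

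The genuine gap is in the uniqueness argument. You propose to show that a left identity $\ell$ is a projection by arguing that $\ell\ell=\ell$ makes $\ell$ idempotent and that the restriction identities then give $\ell^+=\ell$. This step fails: in a restriction semigroup an idempotent need not be a projection, and the identities \eqref{eq:id_rest} give no way to derive $\ell^+=\ell$ from $\ell^2=\ell$. (For instance, any monoid $M$ with a non-identity idempotent $e$, equipped with $x^+=1$ for all $x$, is a reduced $F$-restriction semigroup in which $e$ is idempotent but not a projection.) The correct route, which is the paper's, uses the order rather than idempotency: since $\lambda$ is a projection, identity \eqref{eq:aaa2} gives $\lambda=\ell\lambda=(\ell\lambda)^+\ell\le\ell$; hence $\ell\mathrel{\sigma}\lambda$, so $\ell$ lies in the $\sigma$-class whose maximum is $\lambda$ and therefore $\ell\le\lambda$, which combined with $\lambda\le\ell$ yields $\ell=\lambda$. (The paper phrases this by first concluding that $\ell$ is a projection --- which follows because the $\sigma$-class of a projection equals $P(R)$ --- and then invoking maximality of $\lambda$ among projections; either formulation works, but both hinge on the order inequality $\lambda\le\ell$, not on idempotency.)
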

\begin{proof}
(1) Let $R$ be an $F$-restriction semigroup and  $s,t\in R$ be such that $s\mathrel{\sigma} t$ and $s^+=t^+$. Since $s,t \leq \mx{s}=\mx{t}$, we have $s = s^+\mx{s} = t^+\mx{t} = t$. Hence $R$ is proper.

(2) The definition of $\sigma$ implies that all projections of $R$ are $\sigma$-related. Let $s\in R$ be such that $s\mathrel{\sigma} e$ for some projection $e$. Since $\sigma$ is a congruence, it follows that $s^+\mathrel{\sigma} e$, which yields $s\mathrel{\sigma} s^+$. Since $s^+ = (s^+)^+$ and $R$ is proper, we conclude that $s=s^+$. Therefore, the $\sigma$-class of a projection equals $P(R)$. Given that $R$ is $F$-restriction, it has the maximum projection, $\lambda$. Let $s\in R$, then 
$s = s^+s \leq \lambda s \leq s$, which means that $s=\lambda s$, so that $\lambda$ is a left identity for $R$.

We are left to prove that $\lambda$ is the unique left identity in $R$. Suppose that $\tilde{\lambda}$ is another left identity. Then $\lambda = \tilde{\lambda}\lambda \le \tilde{\lambda}$. In view of Corollary \ref{cor:n2}, it follows that $\lambda \mathrel{\sigma} \tilde{\lambda}$, so that $\tilde{\lambda}$ is a projection.
The maximality of $\lambda$ yields $\lambda \geq \tilde{\lambda}$. Hence $\tilde{\lambda} = \lambda$, as desired.
\end{proof}

It is known that an $F$-birestriction semigroup (and in particular any $F$-inverse semigroup) is necessarily a monoid (see, e.g., \cite[Lemma 5]{Kud15}) where the identity element is the maximum projection. The following example shows that, in the setting of $F$-restriction semigroups, the maximum projection $\lambda$ is not necessarily an identity.

\begin{example}\label{ex:ex1}
Let $S = \{x,\lambda\}$ be the two-element semilattice with $x\leq \lambda$. It is an $X$-generated monoid, where $X=\{x\}$. 
Then the semilattice $\mathcal{X}_{X}$ consists of two graphs, $\Gamma$ and $\Delta$, presented below.

\begin{figure}[ht!]
\centering
\begin{minipage}{0.45\textwidth}
\centering
\begin{tikzpicture}[scale=0.8]
\begin{scope}[every node/.style={circle,fill,inner sep=1.5pt}]
\node[label= {[label distance=-0.7cm]:$\lambda$}] (A) at (0,0) {};
\node[label= {[label distance=-0.9cm]:{$[x]_S$}}] (B) at (5,0) {}; 
\end{scope}

\begin{scope}[>={Stealth[black]}, every node/.style={fill=white,circle,scale=0.7}, every edge/.style={draw, thick}]
\path [->] (A) edge[font=\large, bend left=25] node[above=0.5pt] {$x$} (B);
\end{scope}
\end{tikzpicture}
\caption{The graph $\Gamma$.}
\end{minipage}
\hfill
\begin{minipage}{0.45\textwidth}
\centering
\begin{tikzpicture}[scale=0.8]
\begin{scope}[every node/.style={circle,fill,inner sep=1.5pt}]
\node[label= {[label distance=-0.7cm]:$\lambda$}] (A) at (0,0) {};
\node[label= {[label distance=-0.9cm]:{$[x]_S$}}] (B) at (5,0) {}; 
\end{scope}

\begin{scope}[>={Stealth[black]}, every node/.style={fill=white,circle,scale=0.7}, every edge/.style={draw, thick}]
\path [->] (A) edge[font=\large,bend left=25] node[above=0.5pt] {$x$} (B);
\path[->] (B) edge[ font=\large, out=120,in=30,looseness=20]node[above=1pt] {$x$} (B);
\end{scope}
\end{tikzpicture}
\caption{The graph $\Delta$.}
\end{minipage}
\end{figure}
It is readily seen that $M(S,X)$ has two $\sigma$-classes with two elements each and that $M(S,X)$ is an $F$-restriction semigroup with the left identity   $(\Gamma,\lambda)$. Since $(\Gamma,x)(\Gamma,\lambda) = (\Delta,x)$, we conclude that $(\Gamma,\lambda)$ is not a right identity. Therefore, $M(S,X)$ is not a monoid.
\end{example}

In the following example, we recall the construction of a semidirect product of a semilattice by a monoid from \cite[Proposition 3.1]{GG99}.\footnote{Note that the construction in \cite{GG99} is provided under the assumption that $S$ has only one idempotent, but it works for an arbitrary monoid $S$.} We will then observe in Proposition~\ref{prop:F_semidir} that this construction provides  natural families of $F$-restriction semigroups and monoids.

\begin{example} (Semidirect product of a semilattice by a monoid) \label{ex:semidir} Let $S$ be a monoid with the identity $\lambda$ and $E$ a semilattice with the meet operation $\wedge$. Suppose that $S$ acts on $E$ by endomorphisms, that is:
\begin{enumerate}
\item $(st)\cdot e = s\cdot (t\cdot e)$ for all $s,t\in S$ and $e\in E$.
\item $\lambda\cdot e = e$ for all $e\in E$; 
\item $s\cdot (e \wedge f) = (s\cdot e)\wedge (s\cdot f)$ for all $s\in S$ and $e,f\in E$.
\end{enumerate}
Let $E\rtimes S$ be the set $E\times S$ with the operations $\cdot$ and $^+$ given by:
 $$
(e,s)(f,t) = (e\wedge (s\cdot f), st), \quad (e,s)^+ = (e,\lambda).
$$
The natural partial order is given by $(e,s)\leq (f,t)$ if and only if $s=t$ and $e\leq f$, and the relation $\sigma$ is given by $(e,s) \mathrel{\sigma} (f,t)$ if and only if $s=t$. Moreover, $E\rtimes S$ is a proper restriction semigroup. 

Furthermore, $E\rtimes S$ is a monoid if and only if $E$ is a monoid and the condition
\begin{equation}\label{eq:condition}
\text{for all } s\in S: \quad s\cdot 1 = 1
\end{equation}
holds, where $1$ is the identity of $E$ (which is precisely its maximum element). In the case where  $E\rtimes S$ is a monoid, its identity element  is $(1,\lambda)$.
\end{example}

As a direct consequence, we have the following.

\begin{proposition}\label{prop:F_semidir}
 Let $S$ be a monoid with the identity $\lambda$ and $E$ a monoid semilattice with the top element $1$.  Suppose that $S$ acts on $E$ by endomorphisms. Then $E\rtimes S$  is an $F$-restriction semigroup with 
 $$
 \mx{(e,s)} = (1,s) \quad \text{ for all } (e,s)\in E\rtimes S.
 $$
The maximum projection and the unique left identity is the element $(1,\lambda)$. Furthermore, $E\rtimes S$ is an $F$-restriction monoid if and only if condition  \eqref{eq:condition} holds.
\end{proposition}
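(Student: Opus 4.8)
The plan is to rely on Example \ref{ex:semidir}, which already establishes that $E \rtimes S$ is a proper restriction semigroup and supplies explicit descriptions of its natural partial order, $(e,s) \leq (f,t) \iff s = t \text{ and } e \leq f$, and of its congruence $\sigma$, $(e,s) \mathrel{\sigma} (f,t) \iff s = t$. Granting these, the only genuinely new content to check is the defining property of an $F$-restriction semigroup, namely that each $\sigma$-class has a maximum in the natural partial order; the remaining assertions will then follow formally from the material developed earlier.

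First I would fix $s \in S$ and describe the $\sigma$-class of $(e,s)$. By the description of $\sigma$, this class is $\{(f,s) : f \in E\}$, and restricting the natural partial order to it gives $(f,s) \leq (g,s) \iff f \leq g$ in $E$. Since $E$ is a monoid semilattice with top element $1$, the pair $(1,s)$ both belongs to the class and dominates every member of it, so it is the maximum. This establishes that $E \rtimes S$ is $F$-restriction and at the same time yields the formula $\mx{(e,s)} = (1,s)$.

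It then remains to identify the maximum projection and the unique left identity. From $(e,s)^+ = (e,\lambda)$ the projections are exactly the elements $(e,\lambda)$ with $e \in E$; among these $(1,\lambda)$ is the largest in the natural partial order, hence the maximum projection. Invoking Proposition \ref{prop:n25}(2), which asserts that in any $F$-restriction semigroup the maximum projection coincides with the unique left identity, we conclude that $(1,\lambda)$ is the unique left identity. As a direct check one computes $(1,\lambda)(f,t) = (1 \wedge (\lambda \cdot f),\, t) = (f,t)$, using axiom (2) of the action together with $1$ being the top of $E$. Since each step is an immediate application of the definitions, there is no real obstacle here; the one point worth a moment's attention is verifying that the supremum of each $\sigma$-class is actually attained --- which is clear, since $(1,s)$ itself lies in the class.
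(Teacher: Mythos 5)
Your proof is correct and matches the paper's intent: the paper presents this proposition as a direct consequence of Example \ref{ex:semidir} without writing out a proof, and your argument simply fills in the routine verifications (maximum of each $\sigma$-class via the explicit order and $\sigma$, then Proposition \ref{prop:n25}(2) for the left identity) in exactly the way the paper's phrasing presupposes.
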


The following lemma can be proved similarly to \cite[Proposition 3.1]{AKSz21}.

\begin{lemma}\label{lem:varietyF}
An algebra $(R;\,\cdot \,,^+,\, \mx{})$ of type $(2,1,1)$ is an $F$-restriction semigroup  if and only if the algebra $(R;\, \cdot\,, ^+)$ is a restriction semigroup and the following conditions hold:
\begin{itemize}
\item[(M1)] $\mx{a} \geq a$, for all $a \in R$;
\item[(M2)] $\mx{a}=\mx{(ea)}$, for all $a\in R$ and $e\in P(R)$. 
\end{itemize}
\end{lemma}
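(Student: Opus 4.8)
The plan is to prove the biconditional in Lemma~\ref{lem:varietyF} by establishing each implication separately, using the characterization of $\mx{a}$ as the maximum element of $[a]_\sigma$ together with the description of $\sigma$ via projections.

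First I would prove the forward direction, assuming $(R;\,\cdot\,,^+,\mx{})$ is an $F$-restriction semigroup. Condition (M1) is immediate: by definition $\mx{a}$ is the maximum element of the $\sigma$-class of $a$ with respect to the natural partial order, and $a$ itself lies in that class, so $a \le \mx{a}$. For (M2), fix $a \in R$ and $e \in P(R)$. The key observation is that $ea \mathrel{\sigma} a$, since $\sigma$ identifies all projections and is a congruence, so $ea$ and $a$ share the same $\sigma$-class and hence the same maximum element; therefore $\mx{(ea)} = \mx{a}$. This direction is essentially unwinding definitions.

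The more substantial direction is the converse: assume $(R;\,\cdot\,,^+)$ is a restriction semigroup satisfying (M1) and (M2), and show that each $\sigma$-class has a maximum element, namely $\mx{a}$. The strategy is to show that for any $b \in R$ with $b \mathrel{\sigma} a$, we have $b \le \mx{a}$, and that $\mx{a} \mathrel{\sigma} a$. For the first part I would use that $b \mathrel{\sigma} a$ means there is a projection $f$ with $fb = fa$; I then want to leverage (M2) to replace $a$ inside $\mx{}$ by a suitable product. The natural computation is to show $b \le \mx{b}$ by (M1) and then argue $\mx{b} = \mx{a}$ using (M2), reducing everything to a common prefix: since $fa = fb$, applying (M2) gives $\mx{a} = \mx{(fa)} = \mx{(fb)} = \mx{b}$, where the outer equalities use (M2) with the projection $f$. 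Combined with $b \le \mx{b}$ from (M1), this yields $b \le \mx{a}$, so $\mx{a}$ is an upper bound for the whole $\sigma$-class. Finally, to see $\mx{a}$ itself lies in $[a]_\sigma$, I would apply (M1) to $a$, namely $a \le \mx{a}$, which already forces $a \mathrel{\sigma} \mx{a}$ since $s \le t$ implies $s \mathrel{\sigma} t$ (as $s = s^+ t$ exhibits the required projection $s^+$). Hence $\mx{a}$ is an element of $[a]_\sigma$ and an upper bound for it, so it is the maximum.

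The main obstacle I anticipate is the converse direction, and specifically the chain $\mx{a} = \mx{(fa)} = \mx{(fb)} = \mx{b}$: one must be careful that (M2) is stated for an arbitrary projection $e$, so both outer steps are legitimate applications, while the middle equality is just $fa = fb$. A secondary subtlety is verifying that $\mx{a}$ is genuinely the largest element and not merely maximal, which is why I insist on showing it dominates \emph{every} $\sigma$-related element rather than just being $\sigma$-related and $\ge a$. I would also double-check that the definition of an $F$-restriction semigroup requires no compatibility of $\mx{}$ with the operations beyond what (M1) and (M2) encode, so that the two conditions truly capture the existence of $\sigma$-class maxima; this is where the parallel with \cite[Proposition 3.1]{AKSz21} reassures that the axiomatization is complete.
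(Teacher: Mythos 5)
Your proof is correct and is exactly the standard argument the paper has in mind (the paper omits the proof, deferring to the analogous \cite[Proposition 3.1]{AKSz21}): the forward direction unwinds the definition of $\mx{}$ as the $\sigma$-class maximum, and the converse uses (M2) with a projection $f$ witnessing $fb=fa$ to get $\mx{b}=\mx{(fb)}=\mx{(fa)}=\mx{a}$, together with (M1) and the fact that $a\le\mx{a}$ forces $a\mathrel{\sigma}\mx{a}$. No gaps.
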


It follows that $F$-restriction semigroups form a variety of algebras in the enriched signature $(\cdot\,, ^+, \, \mx{})$.  
By analogy with $F$-inverse and $F$-birestriction monoids we will consider  $F$-restriction semigroups in the enriched signature $(\cdot\,, ^+, \, \mx{},\lambda)$ of type $(2,1,1,0)$, where $\lambda$ is the left identity. These objects form a variety, denoted by ${\mathbf{FR}}$, which is defined by all the identities defining the variety of restriction semigroups with the left identity $\lambda$, along with the identities
\begin{equation}\label{eq:Frestr_id}
x=x^+\mx{x}, \qquad \mx{x}=\mx{(y^+x)}.
\end{equation}
$F$-restriction monoids form the subvariety ${\mathbf{FRM}}$ of the variety ${\mathbf{FR}}$
defined by adding the identity $x\lambda = x$. It follows that free $F$-restriction semigroups and free $F$-restriction monoids exist.

Note that every monoid $S$ can be viewed as an $F$-restriction monoid, that is, in the signature $(\cdot\,,^+,\mx{}, \lambda)$, where $s^+=\lambda$ and $\mx{s} = s$ for all $s\in S$.

\begin{remark} \label{rem:generated}
Let $S$ be an $X$-generated monoid with the identity element $\lambda$
and let $R$ be an $F$-restriction semigroup such that $R/\sigma$ is canonically  $(\cdot\,, ^+,\mx{}, \lambda)$-isomorphic to $S$. Suppose $R$ is 
$X$-generated in the signature $(\cdot\,, ^+,\mx{}, \lambda)$.
For each $s\in S$, let $\overline{s}$ denote the element $\mx{t}$ where $t\in \sigma^{-1}(s)$. Then $s\mapsto \overline{s}$ is a bijection from $S$ to the set $\overline{S} = \{\overline{s}\colon s\in S\} \subseteq R$. Since any $\mx{u}$, where $u\in R$, equals $\overline{\sigma^\natural(u)}$, 
we conclude that
$R$ is $(X\cup \overline{S})$-generated as a restriction semigroup.
\end{remark}

\begin{definition}(Strong and perfect $F$-restriction semigroups)
\label{def:strong_perf}
An $F$-restriction semigroup $R$ will be called {\em strong}, if 
\begin{equation}\label{eq:strong}
\mx{s}\mx{t}=(\mx{s})^+\mx{(st)} \quad \text{ for all }  s,t \in R.
\end{equation}

It will be called {\em perfect} if 
\begin{equation}\label{eq:perfect}
\mx{s}\mx{t}=\mx{(st)} \quad \text{ and }  \quad (\mx{s})^+=\lambda \quad \text{ for all } s,t \in R.
\end{equation}
\end{definition}
By the definition, strong and perfect $F$-restriction semigroups form subvarieties, which we denote by ${\mathbf{FR}}_s$ and ${\mathbf{FR}}_p$, of the variety ${\mathbf{FR}}$. These varieties are defined by the identities which define the variety ${\mathbf{FR}}$ along with the identities in \eqref{eq:strong} or \eqref{eq:perfect}, respectively. Similarly as above, we conclude that free objects in these varieties exist. The free $X$-generated objects in the varieties  ${\mathbf{FR}}$, ${\mathbf{FR}}_s$ and ${\mathbf{FR}}_p$ will be denoted by $\mathsf{FFR}(X)$, $\mathsf{FFR}_s(X)$ and $\mathsf{FFR}_p(X)$, respectively.
\begin{proposition}\label{prop:perfmonoid}
A perfect $F$-restriction semigroup is necessarily a monoid.    
\end{proposition}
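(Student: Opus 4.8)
The plan is to prove that $R$ is a monoid by showing that its maximum projection $\lambda$ — which, by Proposition~\ref{prop:n25}, is the unique left identity — is also a right identity. Thus the whole task reduces to verifying the single identity $s\lambda = s$ for all $s \in R$. The lever for this is the defining identity $s = s^+\mx{s}$ of \eqref{eq:Frestr_id}: multiplying it on the right by $\lambda$ and using associativity gives $s\lambda = s^+(\mx{s}\lambda)$, so it is enough to prove that $\lambda$ acts as a right identity on the maximum elements, that is, $\mx{s}\lambda = \mx{s}$ for every $s$.

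To reach $\mx{s}\lambda = \mx{s}$ I would first record two facts about how $\mx{}$ interacts with $\sigma$. The first is $\mx{\lambda} = \lambda$: since $\lambda$ is a projection, its $\sigma$-class coincides with $P(R)$ (as established in the proof of Proposition~\ref{prop:n25}), and the maximum element of $P(R)$ with respect to the natural partial order is the maximum projection $\lambda$ itself. The second is $\mx{(s\lambda)} = \mx{s}$: because $R/\sigma$ is a reduced restriction semigroup, hence a monoid whose identity is the image of $\lambda$, we have $\sigma^\natural(s\lambda) = \sigma^\natural(s)\sigma^\natural(\lambda) = \sigma^\natural(s)$, so $s\lambda \mathrel{\sigma} s$; lying in a common $\sigma$-class, the elements $s\lambda$ and $s$ have the same maximum, whence $\mx{(s\lambda)} = \mx{s}$.

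With these in place I would instantiate the perfectness relation $\mx{s}\mx{t} = \mx{(st)}$ from \eqref{eq:perfect} at $t = \lambda$. Substituting $\mx{\lambda} = \lambda$ on the left-hand side and $\mx{(s\lambda)} = \mx{s}$ on the right-hand side turns it into exactly $\mx{s}\lambda = \mx{s}$. Plugging this into the reduction of the first paragraph yields $s\lambda = s^+(\mx{s}\lambda) = s^+\mx{s} = s$, so $\lambda$ is a right identity; together with the left-identity property it is a two-sided identity and $R$ is a monoid.

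The only genuinely non-formal steps are the two auxiliary identities $\mx{\lambda} = \lambda$ and $\mx{(s\lambda)} = \mx{s}$, and I expect these to be the place to be careful, since they rest on the description of $\sigma$-classes of projections and on properness (Proposition~\ref{prop:n25}(1)). It is worth noting that the argument uses only the multiplicativity half of \eqref{eq:perfect}; the condition $(\mx{s})^+ = \lambda$ is not invoked for the monoid conclusion, though it offers an alternative derivation of $\mx{\lambda} = \lambda$ (from $(\mx{\lambda})^+ = \lambda = \lambda^+$, $\mx{\lambda}\mathrel{\sigma}\lambda$, and properness).
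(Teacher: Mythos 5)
Your proof is correct and follows essentially the same route as the paper's: both arguments instantiate the multiplicativity condition $\mx{s}\mx{t}=\mx{(st)}$ at $t=\lambda$, use $s\mathrel{\sigma} s\lambda$ and $\mx{\lambda}=\lambda$ to obtain $\mx{s}\lambda=\mx{s}$, and then conclude via $s=s^+\mx{s}$. You merely spell out the two auxiliary identities in more detail than the paper does, and your observation that the second half of \eqref{eq:perfect} is not needed is accurate.
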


\begin{proof}
Let $R$ be a perfect $F$-restriction semigroup. By Proposition \ref{prop:n25}, the maximum projection $\lambda$ is a left identity, so it suffices to show that it is a right identity.
Let $x\in R$. Since $x \mathrel{\sigma} x\lambda$, we have that $\mx{x} = \mx{(x\lambda)} = \mx{x}\mx{\lambda} = \mx{x}\lambda$. It follows that 
$x=x^+\mx{x}= x^+\mx{x}\lambda =  x\lambda$, as desired.
\end{proof}

Proposition  \ref{prop:perfmonoid} implies that the varieties of perfect $F$-restriction semigroups and of perfect $F$-restriction monoids coincide, while we will show in Section \ref{sec:strong} that free strong $F$-restriction semigroups are not monoids, so that strong $F$-restriction monoids form a proper subvariety of the variety ${\mathbf{FR}}_s$.

\begin{theorem} (Structure of perfect $F$-restriction monoids) \label{th:perf_structure}\mbox{}
\begin{enumerate}
\item  Let $S$ be a monoid with the identity $\lambda$ and $E$ a monoid semilattice with the top element $1$.  Suppose that $S$ acts on $E$ by endomorphisms. Then the $F$-restriction semigroup $E\rtimes S$ is strong if and only if it is perfect if and only if 
condition \eqref{eq:condition} holds. 
\item Suppose that $R$ is a perfect $F$-restriction monoid and let $S=R/\sigma$. For each $s\in S$ let $\overline{s}\in R$ be the maximum element of the $\sigma$-class $\sigma^{-1}(s)$. Then $S$ acts on $P(R)$ by endomorphisms via the assignment 
\begin{equation}\label{eq:endo}
s\cdot e = (\overline{s}e)^+\quad    \quad \text{ for all } s\in S \text{ and } e\in P(R).
\end{equation}
Furthermore, this action satisfies 
condition \eqref{eq:condition} and the map $R\to P(R)\rtimes S$ given by
\begin{equation}\label{eq:isom1}
r\mapsto (r^+, \sigma^{\natural}(r))
\end{equation}
is a $(\cdot\,,^+,\mx{},\lambda)$-isomorphism.
\end{enumerate}
\end{theorem}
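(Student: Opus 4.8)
The plan is to treat the two parts separately: part (1) reduces to a direct computation inside the semidirect product, and part (2) to verifying that the natural candidate map is a bijective homomorphism. For part~(1), recall from Proposition~\ref{prop:F_semidir} that $E\rtimes S$ is already $F$-restriction with $\mx{(e,s)}=(1,s)$ and with $(1,\lambda)$ as left identity and maximum projection. Writing $a=(e,s)$ and $b=(f,t)$, I would compute $\mx a\,\mx b=(1,s)(1,t)=(s\cdot 1,st)$ (using that $1$ is the top of $E$), while $\mx{(ab)}=(1,st)$ and $(\mx a)^+=(1,\lambda)$; since $(1,\lambda)$ is a left identity, $(\mx a)^+\mx{(ab)}=(1,st)$. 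Hence both the strong condition \eqref{eq:strong} and the first equation of \eqref{eq:perfect} amount to $(s\cdot 1,st)=(1,st)$, i.e.\ to $s\cdot 1=1$ for all $s$, which is \eqref{eq:condition}, while the second equation of \eqref{eq:perfect}, namely $(\mx a)^+=\lambda$, holds automatically. This yields strong $\iff$ \eqref{eq:condition} $\iff$ perfect.

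For part~(2), I would first record the basic facts about the elements $\overline s$. By Proposition~\ref{prop:perfmonoid} the semigroup $R$ is a monoid with identity $\lambda$, so $S=R/\sigma$ is a monoid with identity $1_S=\sigma^\natural(\lambda)$, the maximum projection $\lambda$ is the top of the semilattice $P(R)$ (so $1=\lambda$), and $\overline{1_S}=\lambda$. The perfectness conditions \eqref{eq:perfect} translate into $\overline s\,\overline t=\overline{st}$ and $(\overline s)^+=\lambda$ for all $s,t\in S$. I would then check that \eqref{eq:endo} defines an action of $S$ on $P(R)$ by endomorphisms: the identity axiom is immediate from $\overline{1_S}=\lambda$; the associativity axiom $(st)\cdot e=s\cdot(t\cdot e)$ follows from $\overline s\,\overline t=\overline{st}$ together with \eqref{eq:aaa1}, which gives $(\overline s\,\overline t\,e)^+=(\overline s(\overline t e)^+)^+$; and the distributivity axiom reduces to the general identity $(aef)^+=(ae)^+(af)^+$ valid for $e,f\in P(R)$ in any restriction semigroup, which I would obtain by computing $(ae)^+(af)^+a=aef$ from the axioms \eqref{eq:id_rest} and then applying $^+$. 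Finally, condition \eqref{eq:condition} holds since $s\cdot\lambda=(\overline s\lambda)^+=(\overline s)^+=\lambda$.

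With the action in hand, $P(R)\rtimes S$ is by part~(1) and Proposition~\ref{prop:F_semidir} a perfect $F$-restriction monoid, and I would show that $\varphi\colon r\mapsto(r^+,\sigma^\natural(r))$ from \eqref{eq:isom1} is a $(\cdot\,,^+,\mx{},\lambda)$-isomorphism onto it. Preservation of $^+$, of $\mx{}$ and of $\lambda$ is immediate once one notes that $\sigma^\natural$ maps every projection to $1_S$, that $(\mx r)^+=\lambda$, and that $\mx{(e,s)}=(1,s)=(\lambda,s)$. Preservation of multiplication is the one genuine computation: using $r=r^+\mx r$ from \eqref{eq:Frestr_id} together with \eqref{eq:aaa1} and \eqref{eq:id_rest}, one verifies that the first coordinate $r^+(\mx r\,t^+)^+$ of $\varphi(r)\varphi(t)$ equals $(rt)^+$. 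Injectivity then follows from properness (Proposition~\ref{prop:n25}(1)), since $\varphi(r)=\varphi(t)$ forces $r^+=t^+$ and $r\mathrel{\sigma}t$, whence $r=t$; and for surjectivity, given $(e,s)$ I would take $r=e\,\overline s$ and check $\sigma^\natural(r)=s$ and, via \eqref{eq:aaa1} and $(\overline s)^+=\lambda$, that $r^+=e$. As a bijective homomorphism of algebras, $\varphi$ is an isomorphism.

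I expect the main obstacle to be the distributivity (endomorphism) axiom for the action, i.e.\ the identity $(aef)^+=(ae)^+(af)^+$: this is the only step that is not a one-line manipulation and requires the right sequence of applications of the restriction axioms. The multiplication-preservation of $\varphi$ is of comparable difficulty but follows the same pattern, so once the key restriction-semigroup identities are isolated, the remaining verifications are routine.
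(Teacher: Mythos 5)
Your proposal is correct and follows essentially the same route as the paper: part (1) by the same direct computation in $E\rtimes S$ reducing all three conditions to $s\cdot 1=1$, and part (2) by verifying the same three action identities (the distributivity one being the paper's Lemma~\ref{lem:n26b}) and then checking that $r\mapsto(r^+,\sigma^{\natural}(r))$ is a bijective morphism. The only difference is that you spell out the ``direct calculation'' for the isomorphism (multiplicativity, injectivity via properness, surjectivity via $r=e\,\overline{s}$), which the paper leaves implicit; all of these checks are valid.
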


\begin{proof}
(1)  Observe that $(\mx{(e,s)})^+ = (1,s)^+ = (1,\lambda)$, so that the second condition of \eqref{eq:perfect}
holds. But then the first condition of \eqref{eq:perfect} is equivalent to \eqref{eq:strong}, which proves that $E\rtimes S$ is strong if and only if it is perfect.

We now prove that the condition \eqref{eq:condition}  holds if and only if $E\rtimes S$ is perfect. Suppose that condition \eqref{eq:condition} holds. As already observed, the maximum projection $(1,\lambda)$ is the identity element.  Since
$$
\mx{(e,s)}\mx{(f,t)}=(1,s)(1,t)= (1\wedge (s\cdot 1),st)=(1,st) = \mx{((e,s)(f,t))},
$$ 
both of the conditions in \eqref{eq:perfect}  hold, so that
$E\rtimes S$ is perfect. Conversely, 
suppose that $E\rtimes S$ is perfect. By Proposition \ref{prop:perfmonoid}, it is a monoid. Since $(1,\lambda)$ is the left identity of $E\rtimes S$, it must be the identity. If $s\in S$, we have $(1,s)(1,\lambda) = (1\wedge (s\cdot 1), s) = (s\cdot 1,s)$. Since $(1,\lambda)$ is the identity, it follows that $s\cdot 1 = 1$.

(2) We denote the identity element of $S$ by $\lambda_S$ and the identity element of $R$ by $\lambda_R$. Then we have $\overline{\lambda_S}=\lambda_R$. Since $(\overline{st}e)^+ = (\overline{s}(\overline{t}e)^+)^+$, $\lambda_S\cdot e = (\lambda_R e)^+ = e$ and $(\overline{s}ef)^+ = (\overline{s}e)^+(\overline{s}f)^+$ for all $s,t\in S$ and $e,f\in P(R)$, \eqref{eq:endo} defines an action of $S$ on $P(R)$ by endomorphisms. In addition,  $s\cdot \lambda_R = (\overline{s}\lambda_R)^+ = \overline{s}^+$, which is equal to $\lambda_R$ by the second equality in \eqref{eq:perfect}. Hence the action satisfies condition \eqref{eq:condition}.
 A direct calculation shows that the assignment in \eqref{eq:isom1} is an $(\cdot\,,^+,\mx{},\lambda)$-isomorphism.
\end{proof}

\begin{remark} Theorem \ref{th:perf_structure} is parallel to the fact that perfect $F$-inverse monoids are precisely semidirect products of monoid semilattices by groups, where groups act on monoid semilattices by automorphisms. This fact can be proved adapting the proof of Theorem \ref{th:perf_structure} and is known (though we did not find its explicit mention in the literature).
\end{remark}

Strong $F$-restriction monoids were defined in \cite{FGG09},\footnote{Strong $F$-restriction monoids are termed {\em weakly left $FA$} in \cite{FGG09}.} and some of their special subclasses were treated in \cite{FG90, FGG99}.
Since any $F$-inverse monoid is automatically strong (cf. \cite[Remark 2.10]{KLF25}), both $F$-restriction semigroups and strong $F$-restriction semigroups generalize $F$-inverse monoids. Perfect $F$-restriction monoids generalize perfect $F$-inverse monoids \cite{AKSz21}. If $R$ is a perfect $F$-inverse monoid, then the first condition of \eqref{eq:perfect} implies the second, which is not the case for $F$-restriction semigroups or $F$-birestriction monoids.\footnote{In \cite{KLF25}, we defined perfect $F$-birestriction monoids only by the first condition of \eqref{eq:perfect}. In Definition~\ref{def:strong_perf} we imposed both of the conditions, to ensure that not only the product of two $\sigma$-classes is a whole $\sigma$-class, but also the $^+$ of a $\sigma$-class is again a whole $\sigma$-class. See also Remark \ref{rem:p}.}

\section{The free \texorpdfstring{$X$}{X}-generated \texorpdfstring{$F$}{F}-restriction semigroup}
\label{sec:F-restr}

In this section we provide a model of the free $X$-generated $F$-restriction semigroup $\mathsf{FFR}(X)$ based on the Cayley graph $\Cay(X^*, X\cup \overline{X^*})$ and prove that $\mathsf{FFR}(X)$ has solvable word problem. 

We start with the following fact which is similar to \cite[Lemma 7.1]{KLF25}.
\begin{lemma}\label{lem:free} 
$\mathsf{FFR}(X)/\sigma$ is $X$-canonically isomorphic to $X^*$.  
\end{lemma}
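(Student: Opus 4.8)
The plan is to establish the isomorphism by exhibiting the canonical morphism in each direction and showing they are mutually inverse, which for $X$-generated algebras reduces to checking that a canonical morphism $\mathsf{FFR}(X)/\sigma \to X^*$ exists and is injective. First I would observe that $\mathsf{FFR}(X)/\sigma$ is the maximum reduced quotient of $\mathsf{FFR}(X)$, hence a reduced $F$-restriction semigroup; by the remarks following Proposition~\ref{prop:n25}, a reduced $F$-restriction semigroup is a monoid whose identity is its unique projection, namely the image of $\lambda$. Thus $\mathsf{FFR}(X)/\sigma$ is an $X$-generated monoid, and since $X^*$ is the free $X$-generated monoid, there is a unique canonical monoid morphism $\psi\colon X^* \to \mathsf{FFR}(X)/\sigma$. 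The content of the lemma is that $\psi$ is an isomorphism.

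The key step is to produce a canonical morphism in the opposite direction witnessing that $\mathsf{FFR}(X)/\sigma$ satisfies no more relations than $X^*$. To this end I would endow $X^*$ with the structure of an $F$-restriction semigroup in the signature $(\cdot\,,^+,\mx{},\lambda)$ by declaring every element to be a projection: set $s^+ = \lambda$, $\mx{s} = s$, and let $\lambda$ be the identity of $X^*$. Since $X^*$ is reduced (it has a single projection, $\lambda$), one readily verifies the restriction identities \eqref{eq:id_rest} together with the defining identities \eqref{eq:Frestr_id}, namely $x = x^+\mx{x} = \lambda s = s$ and $\mx{x} = \mx{(y^+x)}$, because $y^+x = \lambda x = x$ and $\mx{x}=x$. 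Hence $X^*$ is an $X$-generated object of the variety ${\mathbf{FR}}$. By freeness of $\mathsf{FFR}(X)$ there is a canonical $(\cdot\,,^+,\mx{},\lambda)$-morphism $\varphi\colon \mathsf{FFR}(X) \to X^*$. Because $X^*$ is reduced, $\varphi$ identifies all projections and therefore factors through $\sigma$; indeed $\sigma$ is the minimum congruence identifying all projections, so $\ker\varphi \supseteq \sigma$ and we obtain an induced canonical morphism $\overline{\varphi}\colon \mathsf{FFR}(X)/\sigma \to X^*$.

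It then remains to check that $\overline{\varphi}$ and $\psi$ are mutually inverse, which follows automatically once both are canonical: each of the composites $\overline{\varphi}\circ\psi$ and $\psi\circ\overline{\varphi}$ is a canonical endomorphism of an $X$-generated algebra, hence the identity, since between two $X$-generated algebras there is at most one canonical morphism. Therefore $\psi$ is an isomorphism and $\mathsf{FFR}(X)/\sigma \cong X^*$.

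I expect the main obstacle to be purely foundational rather than computational: one must be careful to verify that $X^*$, with the trivial projection structure just described, genuinely lies in the variety ${\mathbf{FR}}$ rather than merely in ${\mathbf{FRM}}$ or some larger class, and in particular that the structure respects the role of $\lambda$ as the \emph{left} identity (which here is a two-sided identity, as $X^*$ is a monoid, and this is consistent since reduced $F$-restriction semigroups are monoids). The verification of \eqref{eq:Frestr_id} is immediate, so the only genuine care needed is in the factorization argument: confirming that $\sigma$ on $\mathsf{FFR}(X)$ is exactly the congruence collapsing projections and that $\overline{\varphi}$ is well-defined as a morphism in the full signature, including that it commutes with $\mx{}$ and preserves $\lambda$.
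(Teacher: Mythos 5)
Your proposal is correct and follows essentially the same route as the paper: the paper's proof likewise equips $X^*$ with the trivial $F$-restriction structure ($u^+=\lambda$, $\mx{u}=u$) and then plays the universal property of $\mathsf{FFR}(X)$ against that of $X^*$; you merely spell out the factorization through $\sigma$ and the mutual-inverse check, which the paper leaves implicit. (Minor slip: in your verification of \eqref{eq:Frestr_id} the variables $x$ and $s$ get mixed, but the computation is evidently correct.)
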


\begin{proof}
As we observed earlier, any $X$-generated monoid is $F$-restriction  where $u^+=\lambda$ and $\mx{u}=u$ for all $u\in S$. Let $S$ be an $X$-generated monoid. The universal property of $\mathsf{FFR}(X)$ yields that $S$ is a $(\cdot\,,^+,\mx{},\lambda)$-quotient of $\mathsf{FFR}(X)$. The needed conclusion now follows from the universal property of $X^*$.
\end{proof}

By Remark \ref{rem:generated} we have that $\mathsf{FFR}(X)$ is $(X \cup \overline{X^*})$-generated as a restriction semigroup, via the assignment map:
$$x\mapsto x, \, x\in X; \quad \overline{u} \mapsto \mx{u}, \, u\in X^*.
$$

\begin{proposition}  \label{prop:nov27a}
$\mathsf{FFR}(X)$ is an $(X\cup \overline{X^*})$-canonical $(\cdot\,,^+)$-quotient of $M(X^*, X\cup \overline{X^*})$.  
\end{proposition}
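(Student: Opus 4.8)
The plan is to deduce this directly from the universal property of the Gould expansion, Theorem \ref{thm:universal_property_M}(2), applied with the monoid $S = X^*$ and the target restriction semigroup $R = \mathsf{FFR}(X)$. The point of the statement is that $M(X^*, X\cup\overline{X^*})$ is, by construction, the universal proper restriction semigroup sitting over $X^*$ with the prescribed generating set, so every $(X\cup\overline{X^*})$-generated proper restriction semigroup whose maximum reduced quotient is $X^*$ receives a canonical morphism from it.

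First I would pin down how $X^*$ is to be regarded as an $(X\cup\overline{X^*})$-generated monoid: the assignment map sends $x\mapsto x$ for $x\in X$ and $\overline{u}\mapsto u$ for $u\in X^*$. Since $X$ already generates $X^*$, this is a legitimate $(X\cup\overline{X^*})$-generation. Next I would gather the three hypotheses needed to invoke the universal property for $R=\mathsf{FFR}(X)$: by Proposition \ref{prop:n25}(1), $R$ is proper; by Remark \ref{rem:generated}, $R$ is $(X\cup\overline{X^*})$-generated as a restriction semigroup; and by Lemma \ref{lem:free}, $R/\sigma$ is canonically isomorphic to $X^*$.

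The one step requiring genuine care is checking that the isomorphism $R/\sigma \to X^*$ of Lemma \ref{lem:free} is canonical with respect to the \emph{extended} generating set $X\cup\overline{X^*}$, and not merely with respect to $X$; this is what supplies the required canonical morphism $\nu\colon X^* \to R/\sigma$. For the generators in $X$ the compatibility is immediate. For a generator $\overline{u}$, the definition in Remark \ref{rem:generated} gives $\overline{u} = \mx{t}$ for some $t$ whose $\sigma$-class corresponds to $u$, whence $\sigma^{\natural}(\overline{u}) = \sigma^{\natural}(t)$ corresponds to $u$ as well; this matches the assignment $\overline{u}\mapsto u$ fixed above, so the isomorphism — equivalently its inverse $\nu$ — is indeed $(X\cup\overline{X^*})$-canonical.

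With all hypotheses verified, Theorem \ref{thm:universal_property_M}(2) produces a canonical $(\cdot\,,^+)$-morphism $\varphi\colon M(X^*, X\cup\overline{X^*}) \to \mathsf{FFR}(X)$. Finally, since any canonical morphism between $X$-generated algebras is automatically surjective (as recalled in the preliminaries on $X$-generated algebras), $\varphi$ is surjective, which is exactly the assertion that $\mathsf{FFR}(X)$ is an $(X\cup\overline{X^*})$-canonical $(\cdot\,,^+)$-quotient of $M(X^*, X\cup\overline{X^*})$. I expect the only nontrivial part to be the verification in the third paragraph that the maximum-reduced-quotient isomorphism respects the barred generators; everything else is a bookkeeping application of results already established.
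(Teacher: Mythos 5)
Your proposal is correct and follows exactly the paper's argument: the paper's proof is a one-line invocation of Theorem \ref{thm:universal_property_M}(2), citing that $\mathsf{FFR}(X)$ is a proper, $(X\cup\overline{X^*})$-generated restriction semigroup with maximum reduced quotient $X^*$. Your additional verification that the isomorphism $R/\sigma\simeq X^*$ is canonical with respect to the extended generating set (so that $\nu$ exists) is a detail the paper leaves implicit, and you have filled it in correctly.
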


\begin{proof}
Since $\mathsf{FFR}(X)$ is a proper $(X \cup \overline{X^*})$-generated restriction semigroup with the maximum reduced quotient $X^*$, the statement follows from the universal property of $M(X^*, X \cup \overline{X^*})$, see Theorem~\ref{thm:universal_property_M}.
\end{proof}

Let $\rho$ be the $(\cdot\,, ^+)$-congruence on $M(X^*, X\cup \overline{X^*})$
such that $M(X^*, X\cup \overline{X^*})/\rho$ is $(X\cup \overline{X^*})$-canonically isomorphic to $\mathsf{FFR}(X)$. In view of Lemma \ref{lem:free} and the observation above, we have that $\rho\subseteq \sigma$. 

We now describe a generating set for $\rho$. Since $\rho$ is the coarsest $(\cdot\,, ^+)$-congruence on $M(X^*, X\cup \overline{X^*})$ which makes the elements of $\overline{X^*}$ the maximum elements of their $\sigma$-classes, $M(X^*, X\cup \overline{X^*})/\rho$ must satisfy the following conditions: 
\begin{enumerate}
\item $\overline{\lambda} \geq \overline{u}^+$ for all $\overline{u}\in \overline{X^+}$;
\item $\overline{x}\geq x$ for all $x\in X$;
\item $\overline{uv} \geq \overline{u}\, \overline{v}$ for all $\overline{u},\overline{v}\in \overline{X^+}$.
\end{enumerate}

\begin{lemma}
Let $\overline{u}\in \overline{X^*}$. Then $\overline{\lambda} \geq \overline{u}^+$ holds in $M(X^*, X\cup \overline{X^*})$ if and only if $\overline{u} = \overline{\lambda}\overline{u}$  holds in $M(X^*, X\cup \overline{X^*})$.
\end{lemma}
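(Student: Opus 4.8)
The plan is to derive the equivalence from a general fact that holds in any restriction semigroup $R$: for a projection $f \in R$ and an arbitrary element $a \in R$, one has $f \geq a^+$ if and only if $a = fa$. Applying this with $f = \overline{\lambda}$ and $a = \overline{u}$ yields precisely the claimed equivalence. The first thing I would record is that this application is legitimate, i.e. that $\overline{\lambda}$ is a projection: under the assignment map, $\overline{\lambda}$ is the element $(\Gamma_{\overline{\lambda}}, \lambda)$ of $M(X^*, X \cup \overline{X^*})$, whose second coordinate is $\lambda$, so it is a projection by Proposition~\ref{prop:properties}(1). Naturally $\overline{u}^+$ is a projection as well, being a value of the operation $^+$.

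For the forward implication I would argue by semilattice arithmetic. Assuming $f \geq a^+$, and recalling that the projections of $R$ form a semilattice in which $f \geq a^+$ means $f a^+ = a^+$, I would compute $fa = f(a^+ a) = (f a^+) a = a^+ a = a$, using the restriction identity $a = a^+ a$.

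For the reverse implication, assuming $a = fa$, I would apply the operation $^+$ to both sides and invoke the identity $(xy)^+ = (x y^+)^+$ from \eqref{eq:aaa1} to obtain $a^+ = (fa)^+ = (f a^+)^+$. Since $f$ and $a^+$ are projections, their product $f a^+$ is again a projection and hence equals its own $^+$; thus $a^+ = f a^+$, which is exactly $a^+ \leq f$, i.e. $f \geq a^+$. Applying the general fact to $f=\overline{\lambda}$, $a=\overline{u}$ gives the lemma, since $fa = \overline{\lambda}\,\overline{u}$.

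I do not expect a genuine obstacle here: the argument is short and essentially forced. The only points demanding a little care are the observation that $\overline{\lambda}$ is a projection (which is what makes the general lemma applicable and covers the degenerate case $u = \lambda$ uniformly) and the selection of the identity \eqref{eq:aaa1} in the reverse direction. As a sanity check one could instead verify everything concretely in the graph model: with $\overline{\lambda} = (\Gamma_{\overline{\lambda}}, \lambda)$ and $\overline{u} = (\Gamma_{\overline{u}}, u)$ one finds $\overline{u}^+ = (\Gamma_{\overline{u}}, \lambda)$ and $\overline{\lambda}\,\overline{u} = (\Gamma_{\overline{\lambda}} \cup \Gamma_{\overline{u}}, u)$, so that by Proposition~\ref{prop:properties}(2) both sides of the asserted equivalence reduce to the single containment $\Gamma_{\overline{\lambda}} \subseteq \Gamma_{\overline{u}}$ and are therefore equivalent.
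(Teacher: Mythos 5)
Your proof is correct and follows essentially the same route as the paper's: both directions are short identity computations in the restriction semigroup, with your forward direction using $fa=(fa^+)a=a^+a=a$ in place of the paper's two-inequality squeeze $\overline{u}=\overline{u}^+\overline{u}\le\overline{\lambda}\,\overline{u}\le\overline{u}$, and your reverse direction matching the paper's application of $^+$ to both sides. The concluding verification in the graph model via Proposition~\ref{prop:properties}(2) is a valid independent check but not needed.
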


\begin{proof}
Suppose $\overline{\lambda} \geq \overline{u}^+$. Then $\overline{u} = \overline{u}^+\overline{u} \leq \overline{\lambda}\overline{u}$. Since $\overline{\lambda} = (\Gamma_{\overline{\lambda}},\lambda)$ is a projection, we also have $\overline{\lambda}\overline{u}\leq \overline{u}$. Thus $\overline{u} = \overline{\lambda}\overline{u}$.

Conversely, suppose $\overline{u} = \overline{\lambda}\overline{u}$. Then $\overline{u}^+ = (\overline{\lambda}\overline{u})^+ = \overline{\lambda}\overline{u}^+$, which means that $\overline{\lambda} \geq \overline{u}^+$.
\end{proof}

We consider the following relations on $M(X^*, X\cup \overline{X^*})$:
\begin{align*}
\to_{N_1} & =\{(\overline{u}, \overline{\lambda}\overline{u}) \colon u\in X^+\};\\
\to_{N_2} & =  \{(x,x^+\overline{x})\colon x\in X\};\\
\to_{N_3} & = \{(\overline{u}\,\overline{v}, (\overline{u}\,\overline{v})^+\overline{uv})\colon 
u,v\in X^+\}.
\end{align*}
and we denote the union of $\to_{N_1}$, $\to_{N_2}$ and $\to_{N_3}$ by $\to_N$.
Since all the relations $\to_{N_i}$ are clearly contained in the congruence $\rho$, the relation $\to_N$ is also contained in  $\rho$. Therefore, the congruence $\rho_N$, generated by  $\to_N$, is also contained in $\rho$. As usual, we call elements of $\to_N$ {\em rewriting rules}.
Applying induction, conditions (2) and (3) imply that for all $n\geq 2$ and for all $u_1,\dots, u_n, u\in X^*$ the quotient of $M(X^*, X\cup \overline{X^*})$ by $\rho_N$ satisfies the conditions:
\begin{equation}\label{eq:n26a}
 \overline{u_1\cdots u_n} \;\geq\; \overline{u_1}\cdots \overline{u_n}\quad\text{and}\quad \overline{u}
\;\geq\;u.   
\end{equation}

The following statement can be proved similarly as \cite[Lemma 4.1]{G96}.

\begin{lemma} \label{lem:generation}
Let $R$ be an $X$-generated restriction semigroup and $u\in R$. 
\begin{enumerate}
\item If $u$ is not a projection, it can be written as $u=ev$ where $e$ is a projection and $v$ is written via generators from $X$ using only the product operation.
\item If $u$ is a projection, it can be written as $u= u_1^+\cdots u_n^+$ where $n\geq 1$ and each $a_i$ is written via generators from $X$ using only the product operation.
\end{enumerate}
\end{lemma}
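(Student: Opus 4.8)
The plan is to prove both statements simultaneously by structural induction on how the element $u$ is built from the generators $X$ using the restriction-semigroup operations $\cdot$ and $^+$, exploiting the identities in \eqref{eq:id_rest}, \eqref{eq:aaa1} and \eqref{eq:aaa2} to push all projection factors to the left. Recall that every element of $R$ is, by definition of an $X$-generated restriction semigroup, obtainable from the images of $X$ by finitely many applications of $\cdot$ and $^+$, so the induction is on the number of operation symbols in such an expression. The two claimed normal forms are not independent: a projection $u$ has the shape of item (2), while a non-projection $u$ has the shape $u=ev$ of item (1), and the inductive step for one case will feed into the other.

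First I would treat the base case, where $u=[x]_R$ for some $x\in X$; here $u$ is already a product of generators (with the empty projection part, or equivalently $u=x^+x$ if one insists on an explicit projection), so item (1) holds trivially, and item (2) is vacuous unless $x$ happens to be a projection. For the inductive step I would consider the two ways $u$ can be formed. If $u=u_1^+$ is a $^+$ of some element, then $u$ is a projection, and I would apply the inductive hypothesis to $u_1$: writing $u_1=e v$ with $e$ a projection and $v=x_{i_1}\cdots x_{i_k}$ a product of generators, the identity $(ev)^+=e v^+=e(x_{i_1}\cdots x_{i_k})^+$ together with $(x_{i_1}\cdots x_{i_k})^+=(x_{i_1}\cdots x_{i_k}^+)^+$ from \eqref{eq:aaa1}, and the fact that $e$ itself unfolds (by the inductive hypothesis for projections) into a product $u_1^+\cdots u_m^+$, yields the required form of item (2). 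The second way is $u=u_1u_2$, a product. Here I would distinguish whether the result is a projection or not, but the main work is to take the inductive normal forms $u_1=e_1v_1$ and $u_2=e_2v_2$ and commute the projection $e_2$ leftwards past $v_1$.

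The key computational step, and the one I expect to be the main obstacle, is this leftward migration of projections through a product of generators. The tool is identity \eqref{eq:aaa2}, $xy^+=(xy^+)^+x$, which lets one move a projection on the right of a single generator to the left at the cost of producing a new projection; iterating this along the word $v_1$ converts $v_1 e_2$ into $f v_1$ for some projection $f$ (explicitly $f=(v_1 e_2)^+$, using \eqref{eq:aaa1} to fold $e_2$ into a $^+$). Combined with the commutativity of projections (the second identity of \eqref{eq:id_rest}) and the semilattice closure $P(R)=\{s^+\}$, one obtains $u=u_1u_2=e_1 v_1 e_2 v_2=e_1 f\,v_1 v_2=(e_1 f)\,(v_1 v_2)$, which is of the form $ev$ with $e=e_1 f$ a projection and $v=v_1 v_2$ a product of generators — giving item (1) when $u$ is not a projection. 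If $u$ is a projection, then $u=u^+=(e v)^+=e v^+$, and since $e$ and $v^+$ are both projections expressible as products of $w^+$'s (the former by the inductive hypothesis for projections, the latter by unfolding $v^+=(x_{i_1}\cdots x_{i_k})^+$ as repeated $^+$ of generator-products and applying \eqref{eq:aaa1}), item (2) follows. The delicate point throughout is bookkeeping the projections correctly so that the decomposition genuinely terminates in generators under product alone; once the migration identity \eqref{eq:aaa2} is applied systematically this is routine, and the cited similarity to \cite[Lemma 4.1]{G96} confirms that no essentially new idea beyond these manipulations is needed.
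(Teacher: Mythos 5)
Your argument is correct and is essentially the proof the paper has in mind: the paper omits the proof entirely, deferring to \cite[Lemma 4.1]{G96}, whose argument is exactly this induction on an expression for $u$ in the generators, using $ae=(ae)^+a$ (identity \eqref{eq:aaa2}, together with \eqref{eq:aaa1} and Lemma \ref{lem:n26b}) to migrate projections leftwards. The only point you should make explicit is that the statement being proved by induction must be strengthened so that the projection $e$ in item (1) is itself produced in the form $w_1^+\cdots w_m^+$ with each $w_j$ a product of generators --- your appeal to ``the inductive hypothesis for projections'' applied to $e$ is not literally legitimate, since $e$ is an output of the induction rather than a subterm, but carrying the full normal form $w_1^+\cdots w_m^+v$ through the induction repairs this at no cost.
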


\begin{proposition}\label{prop:mod1}
$M(X^*, X\cup \overline{X^*})/\rho_N$ is an $X$-generated $F$-restriction semigroup. Consequently, $\rho_N=\rho$ and thus $M(X^*, X\cup \overline{X^*})/\rho_N$ is $X$-canonically $(\cdot\,, ^+,\mx{},\lambda)$-isomorphic to ${\mathsf{FFR}}(X)$.
\end{proposition}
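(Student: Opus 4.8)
The plan is to show that the quotient $Q = M(X^*, X\cup \overline{X^*})/\rho_N$ is an $X$-generated $F$-restriction semigroup, since this forces $Q$ to be a $(\cdot\,, ^+,\mx{},\lambda)$-quotient of the free object ${\mathsf{FFR}}(X)$; combined with the already-established inclusion $\rho_N\subseteq \rho$ and the defining property of $\rho$ (namely that $M(X^*, X\cup \overline{X^*})/\rho \cong {\mathsf{FFR}}(X)$ canonically), this yields $\rho_N=\rho$ and the claimed isomorphism. Concretely, $\rho_N \subseteq \rho$ gives a canonical surjection $Q \twoheadrightarrow {\mathsf{FFR}}(X)$, while the $F$-restriction property of $Q$ together with the universal property of ${\mathsf{FFR}}(X)$ gives a canonical surjection in the other direction; since canonical morphisms between $X$-generated algebras are unique, the two compositions are identities and $Q\cong {\mathsf{FFR}}(X)$, whence the congruences coincide.

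First I would record that $Q$ is a restriction semigroup (as a quotient of one) which is $X$-generated via the images of the generators $x\in X$: indeed relation $N_2$ identifies $\overline{x}$ with $x^+\overline{x}$, and more importantly the relations let every $\overline{u}$ for $u\in X^+$ be expressed through $X$, so the $(X\cup\overline{X^*})$-generating set collapses to an $X$-generating set in $Q$. Then I would define, for an element represented by $(A,s)$ with $s$ the label of a path of label $u\in X^*$, the candidate maximum $\mx{(A,s)} := \overline{u}$ (the image of $(\Gamma_{\overline{u}},\overline{u}\,)$), and verify via Lemma~\ref{lem:varietyF} that this makes $Q$ an $F$-restriction semigroup. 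The two conditions to check are (M1) $\mx{a}\geq a$ and (M2) $\mx{a}=\mx{(ea)}$ for every projection $e$. Condition (M2) is immediate from the definition since $ea$ and $a$ lie in the same $\sigma$-class (recall $\rho_N\subseteq\sigma$) and hence have the same label.

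The substantive work — and the main obstacle — is condition (M1): I must show that in $Q$ the element $\overline{u}$ dominates \emph{every} element of its $\sigma$-class, equivalently that $\overline{u}\geq (A,s)$ in $Q$ whenever $s$ is a vertex reachable by a path of label $u$. The natural strategy is to use the generation results: by Lemma~\ref{lem:generation} any such element can be written as a product of generators (with projections factored out), and then the inequalities $\overline{u_1\cdots u_n}\geq \overline{u_1}\cdots\overline{u_n}$ and $\overline{u}\geq u$ from \eqref{eq:n26a}, which already hold in $Q$ by construction of $N_1,N_2,N_3$, let me build up the domination edge-by-edge along each spanning path $p_e$ appearing in the decomposition \eqref{eq:n26c}–\eqref{eq:n26d}. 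The delicate point is that domination in the natural partial order must be verified after passing to the quotient, so I cannot simply reason with graphs in $M(X^*, X\cup\overline{X^*})$; I would argue that each generator $x$ is below $\overline{x}$ and each factored projection is absorbed, so that the whole product $(A,s)$ is below the product of the corresponding $\overline{x}$'s, which in turn is below $\overline{u}$ by the first inequality in \eqref{eq:n26a}. Once (M1) and (M2) are in place, Lemma~\ref{lem:varietyF} certifies that $Q$ is $F$-restriction, and the universal-property argument of the first paragraph closes the proof.
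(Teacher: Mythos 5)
Your overall strategy coincides with the paper's: exhibit $Q=M(X^*,X\cup\overline{X^*})/\rho_N$ as an $X$-generated $F$-restriction semigroup with $\mx{(A,s)}=\overline{s}$, then conclude $\rho_N=\rho$ from the universal property and the uniqueness of canonical morphisms; and your treatment of (M1) for non-projections (factor out the projection prefix, replace each generator by the corresponding barred element via $N_2$, collapse the product of barred elements via $N_3$, i.e.\ \eqref{eq:n26a}) is exactly Case~1 of the paper's proof. Two points, however, need repair. First, your justification of $X$-generation is wrong as written. The relation in $N_2$ is $(x,x^+\overline{x})$: it identifies $x$ with $x^+\overline{x}$ (giving $x\le\overline{x}$), not $\overline{x}$ with $x^+\overline{x}$ --- the latter would force $\overline{x}\le x$ and hence $\overline{x}=x$. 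More substantively, the relations do \emph{not} let $\overline{u}$ be expressed through $X$ in the plain $(\cdot\,,{}^+)$ signature, so the generating set does not ``collapse'' at the level of restriction semigroups (in the graph model, $\overline{u}$ is represented by a graph with no $X$-labelled edges, whereas any $(\cdot\,,{}^+)$-word in $X$ produces such edges). The correct claim, which is the one the paper makes, is that $Q$ is $X$-generated \emph{in the enriched signature}, because $\overline{w}=\mx{w}$ for $w\in X^+$ and $\overline{\lambda}=\mx{(x^+)}$; this can only be recorded \emph{after} the $F$-restriction property is established, so this step must come last rather than first.

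Second, your verification of (M1) omits the projection $\sigma$-class. For a projection $u=u_1^+\cdots u_n^+$ (Lemma~\ref{lem:generation}(2)) there is no word $v$ into which the projection factors can be ``absorbed''; one needs the separate chain $u_i^+\le\overline{\sigma^\natural(u_i)}^+\le\overline{\lambda}$, where the last inequality is exactly what the relations $N_1$ provide. This is Case~2 of the paper's proof and is the only place $N_1$ is genuinely used, so the gap, though easily filled, is a real one: without it you have not shown that $\overline{\lambda}$ dominates all projections, and hence not that every $\sigma$-class of $Q$ has a maximum.
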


\begin{proof}
Let $u\in M(X^*, X\cup \overline{X^*})/\rho_N$. We aim to show that its $\sigma$-class has a maximum element. There are two possible cases:

{\em Case 1.} Suppose first that $u$ is not a projection. 
By Lemma \ref{lem:generation}(1) we can write $u=ev$, where $v$ is written in generators from $X\cup \overline{X^*}$ using only the product operation.
Since  $x\leq \overline{x}$ for all factors $x$ in this product, where $x\in X$, 
and $\leq$ is compatible with the multiplication, we have that
$v \leq \overline{v_1}\, \overline{v_2}\, \cdots \overline{v_n}$ holds in $M(X^*, X\cup \overline{X^*})/\rho_N$ for appropriate $v_i \in X^*$. 
Applying \eqref{eq:n26a}, we have that $v \leq \overline{v_1v_2\cdots v_n}$.
Then
$$
u =ev\leq v \leq \overline{v_1v_2\cdots v_n} = \overline{\sigma^{\natural}(u)}.
$$
This shows that $\overline{\sigma^{\natural}(u)}$ is the maximum element in $[u]_\sigma$.

{\em Case 2.} Suppose now that $u$ is a projection.
By Lemma \ref{lem:generation}(2) we have that $u=u_1^+\cdots u_n^+$ where each $u_i$ is a non-empty word written in generators $X\cup \overline{X^*}$ using only the product operation. From what we proved in the previous case, it follows that $u_i\leq \overline{\sigma^{\natural}(u_i)}$ and, therefore, $ u_i^+\leq  \overline{\sigma^{\natural}(u_i)}^+\leq \overline{\lambda}$ for all $i$.  Hence $u \leq \overline{\lambda}$. 

Since $\to_N$ is contained in  $\sigma$, the congruence $\rho_N$ is also contained in $\sigma$. Hence, the quotient of $M(X^*, X\cup \overline{X^*})/\rho_N$ by $\sigma$ is isomorphic to $X^*$. Consequently, the elements $\overline{\sigma^{\natural}(u)}$, where $u\in X^*$, are maximum elements in their $\sigma$-classes in $M(X^*, X\cup \overline{X^*})/\rho_N$. This
finishes the proof of the  claim that $M(X^*, X\cup \overline{X^*})/\rho_N$ is an $F$-restriction semigroup.

Further, $M(X^*, X\cup \overline{X^*})/\rho_N$ is $X$-generated as an $F$-restriction semigroup, because it is $(X\cup \overline{X^*})$-generated as a restriction semigroup and we have that $\overline{w} = \mx{w}$ for all 
$w\in X^+$ and $\overline{\lambda} = \mx{(x^+)}$ for all $x\in X$.

Since $M(X^*, X\cup \overline{X^*})/\rho_N$ is $F$-restriction, it follows that $\rho\subseteq \rho_N$. Given that the opposite inclusion also holds, we conclude that $\rho=\rho_N$, which finishes the proof.
\end{proof}

We now describe a model for $M(X^*, X\cup \overline{X^*})/\rho \simeq {\mathsf{FFR}}(X)$ based on the Cayley graph $\Cay(X^*, X\cup \overline{X^*})$. This model will be based on identifying a canonical representative $(\Gamma^{\wedge}, s)$ in the $\rho$-class of every $(\Gamma, s) \in M(X^*, X\cup \overline{X^*})$. The graph $\Gamma^{\wedge}$ will be obtained from $\Gamma$ by adding to it finitely many edges, using the transformations of types ($T_1$), ($T_2$), ($T_3$), which are presented below.\footnote{On figures, edges labeled by elements of $X$ are represented by solid lines, and  edges labeled by elements of $\overline{X^*}$ are represented by dashed lines.}

($T_1$) Suppose $\Gamma$ has
the edge $(v, \overline{u},vu)$, but does not have the edge $(v,\overline{\lambda},v)$. Define $\Gamma'$ by adding to $\Gamma$ the edge $(v,\overline{\lambda},v)$, see Figure~\ref{fig:c1}.

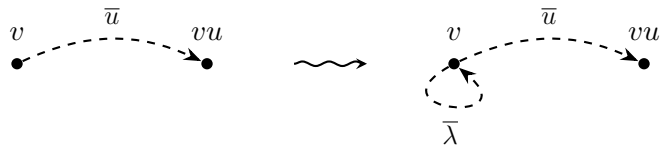
\begin{figure}[!ht]
\centering
\begin{tikzpicture}[scale=0.5]
\begin{scope}[every node/.style={circle,fill,inner sep=1.5pt}]
\node[label= {[label distance=0.1cm]:$v$}] (A) at (0,0) {};
\node[label= {[label distance=0.01cm]:{$v u$}}] (B) at (5,0) {}; 
\end{scope}

\begin{scope}[>={Stealth[black]}, every node/.style={fill=white,circle,scale=0.7}, every edge/.style={draw, thick}]
\path [->] (A) edge[font=\large, dashed, bend left=25] node[above=0.5pt] {$\overline{u}$} (B);
\end{scope}
 
\node (P) at (7, 0) {};
\node (Q) at (9.5, 0) {};
\begin{scope},
\draw[-stealth,thick,decorate,decoration={snake,amplitude=.3mm}] (P) -- (Q) node[above,midway]{};
\end{scope}

\begin{scope}[every node/.style={circle,fill,inner sep=1.5pt},xshift=11.5cm]
\node[label={[label distance=0.1cm]:$v$}] (A) at (0,0) {}; 
\node[label={[label distance=0.01cm]:{$v u$}}] (B) at (5,0) {}; 
\end{scope}

\begin{scope}[>={Stealth[black]}, every node/.style={fill=white,circle,scale=0.7}, every edge/.style={draw, thick}]
\path [->] (A) edge[font=\large, dashed, bend left=25] node[above=0.5pt] {$\overline{u}$} (B);
\path[->] (A) edge[font=\large, dashed, out=210,in=320,looseness=23]node[below=0.5pt] {$\overline{\lambda}$} (A);
\end{scope}
\end{tikzpicture}
\caption{Illustration of ($T_1$).}\label{fig:c1}
\end{figure}

($T_2$) Suppose $\Gamma$ has the edge $(v,x,vx)$,  but does not have the edge $(v, \overline{x}, vx)$. 
We define $\Gamma'$ by adding to $\Gamma$ the edge $(v, \overline{x},v x)$, see Figure~\ref{fig:c2a}.

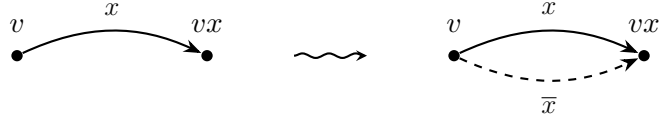
\begin{figure}[!ht]
\centering
\begin{tikzpicture}[scale=0.5]
\begin{scope}[every node/.style={circle,fill,inner sep=1.5pt}]
\node[label= {[label distance=0.1cm]:$v$}] (A) at (0,0) {};
\node[label= {[label distance=0.01cm]:{$v x$}}] (B) at (5,0) {}; 
\end{scope}

\begin{scope}[>={Stealth[black]}, every node/.style={fill=white,circle,scale=0.7}, every edge/.style={draw, thick}]
\path [->] (A) edge[font=\large, bend left=25] node[above=0.5pt] {$x$} (B);
\end{scope}
 
\node (P) at (7, 0) {};
\node (Q) at (9.5, 0) {};
\begin{scope},
\draw[-stealth,thick,decorate,decoration={snake,amplitude=.3mm}] (P) -- (Q) node[above,midway]{};
\end{scope}

\begin{scope}[every node/.style={circle,fill,inner sep=1.5pt},xshift=11.5cm]
\node[label={[label distance=0.1cm]:$v$}] (A) at (0,0) {}; 
\node[label={[
label distance=0.01cm]:{$v x$}}] (B) at (5,0) {}; 
\end{scope}

\begin{scope}[>={Stealth[black]}, every node/.style={fill=white,circle,scale=0.7}, every edge/.style={draw, thick}]
\path [->] (A) edge[font=\large, bend left=25] node[above=0.8pt] {$x$} (B);
\path [->] (A) edge[font=\large, dashed,bend right=25] node[below=0.8pt]{$\overline{x}$} (B);
\end{scope}
\end{tikzpicture}
\caption{Illustration of ($T_2$).}\label{fig:c2a}
\end{figure}

($T_3$) Suppose $\Gamma$ has the path labeled by $\overline{u}\,\overline{v}$ from $w$ to $w uv$, but does not have the edge $(w, \overline{uv},w uv)$. 
We define $\Gamma'$ by adding to $\Gamma$ the edge $(w, \overline{uv},w uv)$, see Figure~\ref{fig:c3a}.

\begin{figure}[!ht]
\begin{tikzpicture}[scale=0.9]
\begin{scope}[every node/.style={circle,fill,inner sep=1.5pt}]
\node[label= {[label distance=0.1cm]:$w$}] (A) at (0,0) {};
\node[label= {[label distance=0.01cm]:{$w u$}}] (B) at (2.5,0) {};
\node[label= {[label distance=-0.1cm]:{$w uv$}}] (C) at (5,0) {};
\end{scope}

\begin{scope}[>={Stealth[black]}, every node/.style={fill=white,circle,scale=0.7}, every edge/.style={draw, thick}]
\path [->] (A) edge[font=\large, dashed,bend left=25] node[above=0.5pt] {$\overline{u}$} (B);
\path [->] (B) edge[font=\large, dashed, bend left=25] node[above=0.5pt] {$\overline{v}$} (C);
\end{scope}

\node (P) at (6, 0) {};
\node (Q) at (8, 0) {};

\begin{scope},
\draw[-stealth,thick,decorate,decoration={snake,amplitude=.3mm}] (P) -- (Q) node[above,midway]{};
\end{scope}
\begin{scope}[every node/.style={circle,fill,inner sep=1.5pt},xshift=9cm]
\node[label={[label distance=0.1cm]:{$w$}}] (A) at (0,0) {}; 
\node[label={[label distance=0.01cm]:{$w u$}}] (B) at (2.5,0) {};
\node[label={[label distance=-0.1cm]:{$w uv$}}] (C) at (5,0) {};
\end{scope}

\begin{scope}[>={Stealth[black]}, every node/.style={fill=white,circle,scale=0.7}, every edge/.style={draw, thick}]
\path [->] (A) edge[font=\large, dashed,bend left=25] node[above=0.5pt] {$\overline{u}$} (B);
\path [->] (B) edge[font=\large, dashed, bend left=25] node[above=0.5pt] {$\overline{v}$} (C);
\path [->] (A) edge[font=\large, dashed, bend right=25] node[below=0.5pt] {$\overline{uv}$} (C);
\end{scope}

\end{tikzpicture}
\caption{Illustration of ($T_3$).}\label{fig:c3a}
\end{figure}
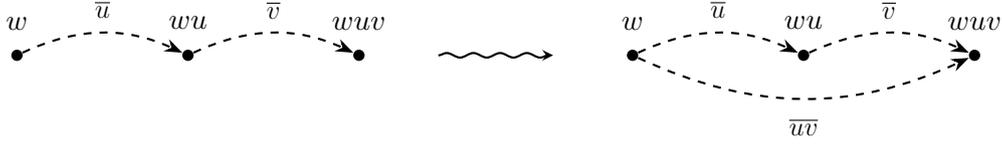

Let $\Gamma$ be a finite  accessible subgraph of $\Cay(X^*,X\cup \overline{X^*})$  with at least one edge. If any of the above described transformations can be applied to it, we (arbitrarily) choose such a transformation and apply it.
We then denote the obtained graph $\Gamma'$ by $\Gamma$ and repeat this step. Since there are at most two edges between any two vertices of $\Cay(X^* , X\cup \overline{X^*})$ and since $\Gamma$ is finite,  the process terminates in finitely many steps once none of the transformations of types ($T_1$), ($T_2$) or ($T_3$) is applicable to $\Gamma$. Moreover, the process is clearly locally confluent. Therefore, by Newman's lemma, it is confluent and terminates
at a finite accessible subgraph  $\Gamma^\wedge \supseteq \Gamma$  of $\Cay(X^* , X\cup \overline{X^*})$.

\begin{remark} \label{rem:closure}
It can be readily verified that the map $\Gamma \mapsto \Gamma^\wedge$ defines a closure operator on the semilattice $\mathcal{X}_{X\cup \overline{X^*}}$ of all finite accessible subgraphs of $\Cay(X^*, X\cup \overline{X^*})$ with at least one edge, that is:
\begin{enumerate}
\item $\Gamma^{\wedge}\supseteq \Gamma$;
\item $(\Gamma^{\wedge})^\wedge = \Gamma^\wedge$;
\item If $\Gamma_1\supseteq \Gamma_2$ then $\Gamma_1^{\wedge}\supseteq \Gamma_2^{\wedge}$.
\end{enumerate}
\end{remark}

Note that  $\Gamma^{\wedge}$ has a loop $(u,\overline{\lambda},u)$ at every its non-sink vertex $u$. Furthermore, the maximum projection of $M(X^*,X\cup\overline{X^*})/\rho$ is $(\Gamma_{\overline{\lambda}}, \lambda)$, where $\Gamma_{\overline{\lambda}}$ is the graph with the only vertex $\lambda$ and the only edge $(\lambda,\overline{\lambda},\lambda)$. 

As an example we describe $\Gamma_x^{\wedge}$ where $x\in X$ and $\Gamma_{\overline{u}}^{\wedge}$ where $u\in  X^+$. The graph $\Gamma_x^{\wedge}$ is obtained from the graph $\Gamma_x$ by adding to it the edges $(\lambda, \overline{x}, x)$ and $(\lambda, \overline{\lambda}, \lambda)$, see Figure \ref{fig:d4a}. The graph $\Gamma_{\overline{u}}^{\wedge}$ is obtained from the graph $\Gamma_{\overline{u}}$ by adding to it the edge $(\lambda, \overline{\lambda}, \lambda)$, see~Figure~\ref{fig:d4b}.

\begin{figure}[ht]
    \centering
    \begin{minipage}{0.45\textwidth}
        \centering
        \begin{tikzpicture}[scale=0.8]
\begin{scope}[every node/.style={circle,fill,inner sep=1.5pt}]
\node[label= {[label distance=-0.7cm]:$\lambda$}] (A) at (0,0) {};
\node[label= {[label distance=-0.7cm]:{$x$}}] (B) at (5,0) {}; 
\end{scope}

\begin{scope}[>={Stealth[black]}, every node/.style={fill=white,circle,scale=0.7}, every edge/.style={draw, thick}]
\path [->] (A) edge[font=\large, bend left=25] node[above=0.5pt] {$x$} (B);
\path [->] (A) edge[font=\large, dashed, bend right=25] node[above=0.5pt] {$\overline{x}$} (B);
\path[->] (A) edge[font=\large, dashed, out=160,in=70,looseness=30]node[above=1pt] {$\overline{\lambda}$} (A);
\end{scope}
\end{tikzpicture}
\caption{The graph $\Gamma_x^{\wedge}$.}\label{fig:d4a}
\end{minipage}
\hfill
\begin{minipage}{0.45\textwidth}
\centering
\begin{tikzpicture}[scale=0.8]
\begin{scope}[every node/.style={circle,fill,inner sep=1.5pt}]
\node[label= {[label distance=-0.7cm]:$\lambda$}] (A) at (0,0) {};
\node[label= {[label distance=-0.7cm]:{$u$}}] (B) at (5,0) {}; 
\end{scope}

\begin{scope}[>={Stealth[black]}, every node/.style={fill=white,circle,scale=0.7}, every edge/.style={draw, thick}]
\path [->] (A) edge[font=\large, dashed, bend left=25] node[above=0.5pt] {$\overline{u}$} (B);
\path[->] (A) edge[font=\large, dashed, out=160,in=70,looseness=30]node[above=1pt] {$\overline{\lambda}$} (A);
\end{scope}
\end{tikzpicture}
\caption{The graph $\Gamma_{\overline{u}}^{\wedge}$.}\label{fig:d4b}
\end{minipage}
\end{figure}

The following statement explains the rationale of the passage from $\Gamma$ to $\Gamma^{\wedge}$.

\begin{proposition}\label{prop:rho}
Let $(A,s), (B,t) \in M(X^*,X\cup\overline{X^*})$. Then
$(A,s) \mathrel{\rho} (B,t)$ if and only if $s=t$ and $A^\wedge = B^\wedge$.    
\end{proposition}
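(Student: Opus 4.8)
The plan is to prove that $\rho$ equals the relation
$$
\theta=\{((A,s),(B,t))\in M\times M : s=t \text{ and } A^{\wedge}=B^{\wedge}\},\qquad M=M(X^*,X\cup\overline{X^*}),
$$
which is exactly the assertion. The requirement $s=t$ on $\rho$-related pairs is immediate: from $N\subseteq\sigma$ we get $\rho=\rho_N\subseteq\sigma$ (as already used in the proof of Proposition~\ref{prop:mod1}), and Proposition~\ref{prop:properties}(3) then forces $s=t$. Hence everything reduces to matching the closures, i.e.\ to $\rho=\theta$. I will also use throughout that the transformations $(T_1)$--$(T_3)$ add edges but never vertices, so $V(\Gamma^{\wedge})=V(\Gamma)$; in particular $A^{\wedge}=B^{\wedge}$ entails $V(A)=V(B)$.

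For the inclusion $\theta\subseteq\rho$ I would first verify that a single application of $(T_1)$, $(T_2)$ or $(T_3)$, turning $\Gamma$ into $\Gamma'$, satisfies $(\Gamma,s)\mathrel{\rho}(\Gamma',s)$ for every $s\in V(\Gamma)$. Each transformation is the localisation at the relevant vertex of one of the generating pairs in $N_1,N_2,N_3$: for $(T_1)$ with edge $(v,\overline u,vu)$ one left-multiplies the pair $(\overline u,\overline\lambda\,\overline u)\in N_1$ by $(\Gamma,v)$ and uses $v\Gamma_{\overline u}\subseteq\Gamma$ to obtain $(\Gamma,vu)\mathrel{\rho}(\Gamma',vu)$, and $(T_2)$, $(T_3)$ are analogous using $N_2$, $N_3$. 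To move the distinguished vertex from $vu$ to an arbitrary $s\in V(\Gamma)$, apply ${}^+$ to reach $(\Gamma,\lambda)\mathrel{\rho}(\Gamma',\lambda)$ and then right-multiply by $(\langle p\rangle,s)$, where $p$ is a directed path from $\lambda$ to $s$ inside $\Gamma$; since $\langle p\rangle\subseteq\Gamma\subseteq\Gamma'$ this yields $(\Gamma,s)\mathrel{\rho}(\Gamma',s)$. Iterating along the sequence of transformations building $\Gamma^{\wedge}$ gives $(A,s)\mathrel{\rho}(A^{\wedge},s)$, whence a $\theta$-related pair $(A,s),(B,t)$, for which $t=s$, satisfies $(A,s)\mathrel{\rho}(A^{\wedge},s)=(B^{\wedge},t)\mathrel{\rho}(B,t)$.

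For the reverse inclusion $\rho\subseteq\theta$ I would show that $\theta$ is a $(\cdot\,,{}^+)$-congruence containing $N$; since $\rho=\rho_N$ is the smallest such congruence, this gives $\rho\subseteq\theta$. That $\theta$ is an equivalence relation is clear, and $N\subseteq\theta$ is a direct check: in each of $N_1,N_2,N_3$ the two graphs $G_1\subseteq G_2$ share the same distinguished vertex, and $G_2\subseteq G_1^{\wedge}$ by the very rule $(T_1)$, $(T_2)$ or $(T_3)$, so monotonicity and idempotency of $\wedge$ (Remark~\ref{rem:closure}) give $G_1^{\wedge}=G_2^{\wedge}$. Compatibility with ${}^+$ is immediate since $(A,s)^+=(A,\lambda)$. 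For multiplication it suffices to treat one-sided products: right multiplication by $(C,r)$ needs $(A\cup sC)^{\wedge}=(B\cup sC)^{\wedge}$ when $A^{\wedge}=B^{\wedge}$, which follows from the purely order-theoretic identity $(Z\cup Y^{\wedge})^{\wedge}=(Z\cup Y)^{\wedge}$, valid for any closure operator, applied with $Y=A$ and $Y=B$; left multiplication by $(C,r)$ needs $(C\cup rA)^{\wedge}=(C\cup rB)^{\wedge}$, and here one additionally invokes translation invariance $(rA)^{\wedge}=r(A^{\wedge})$ to reduce to the same identity.

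The main obstacle is precisely this interaction of $\wedge$ with union and left translation. Translation invariance rests on the fact that the rules $(T_1)$--$(T_3)$ are local and equivariant under the left action of $X^*$ on $\Cay(X^*,X\cup\overline{X^*})$: since $X^*$ is left cancellative, $w\mapsto rw$ is an injective label-preserving map of vertices with $(rw)[y]_{X^*}=r(w[y]_{X^*})$, so a derivation of an edge of $A^{\wedge}$ from $A$ transports verbatim to a derivation of its $r$-translate inside any graph containing $rA$. I would make this rigorous by reading $\Gamma^{\wedge}$ as the least fixed point of the three edge-adding rules applied to an arbitrary finite subgraph (the ``does not have the edge'' clauses in $(T_1)$--$(T_3)$ only suppress redundant steps and do not affect the fixed point), which simultaneously secures well-definedness, the closure-operator axioms of Remark~\ref{rem:closure}, and the translation equivariance required above.
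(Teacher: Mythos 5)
Your proof is correct, and it differs from the paper's in a way worth noting. For the inclusion $\theta\subseteq\rho$ your localisation argument (multiplying a generating pair of $N_i$ on the left by $(\Gamma,v)$, absorbing via ${}^+$, and restoring the distinguished vertex by a path) is essentially an explicit, self-contained rendering of one direction of the paper's Lemma~\ref{lem:n26e}, which asserts the correspondence between applications of relations from $N_i$ and the transformations $(T_i)$; the substance is the same. The genuine divergence is in the inclusion $\rho\subseteq\theta$: the paper invokes the chain description of the congruence generated by $N$ together with the converse direction of Lemma~\ref{lem:n26e}, concluding $A_i^{\wedge}=A_{i+1}^{\wedge}$ along the chain, whereas you bypass the rewriting chains entirely by proving that $\theta$ is itself a $(\cdot\,,{}^+)$-congruence containing $N$ and appealing to minimality of $\rho_N$. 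Your route buys a cleaner logical structure (no need to manipulate factorizations of arbitrary elements into generators), but the price is that you must establish the compatibility of the closure $\wedge$ with the operations of $M(X^*,X\cup\overline{X^*})$ --- in particular the identity $(Z\cup Y^{\wedge})^{\wedge}=(Z\cup Y)^{\wedge}$ and the transport of derivations under left translation by $r\in X^*$. You correctly identify the latter as the delicate point and your least-fixed-point reading of $\Gamma\mapsto\Gamma^{\wedge}$, together with the equivariance of the rules $(T_1)$--$(T_3)$ under the left action of $X^*$, does make it rigorous (and as a by-product it justifies the well-definedness of the multiplication formula in Proposition~\ref{prop:model}, which the paper obtains for free from $\rho$ being a congruence). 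The only cosmetic blemish is the notation $(rA)^{\wedge}$, which is not literally defined since $rA$ need not be accessible; your inclusion $r(A^{\wedge})\subseteq(C\cup rA)^{\wedge}$ is what is actually needed and is what your transport argument delivers.
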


To prove Proposition \ref{prop:rho},
we need the following two lemmas.

\begin{lemma}\label{lem:n26b} For any elements $u,w,v$ of a restriction semigroup we have 
\begin{equation*}
(uw^+v)^+=(uw)^+(uv)^+.
\end{equation*}
\end{lemma}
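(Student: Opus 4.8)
The plan is to reduce everything to the defining identities \eqref{eq:id_rest} by first moving the inner projection $w^+$ to the outside. The key observation is that the fourth identity of \eqref{eq:id_rest}, namely $xy^+=(xy)^+x$, is precisely the tool for converting a factor of the form $w^+$ (sitting to the right of $u$) into the projection $(uw)^+$ (sitting to the left of $u$). Applying it with $x=u$ and $y=w$ gives $uw^+=(uw)^+u$, and multiplying on the right by $v$ yields
\begin{equation*}
uw^+v=(uw)^+uv.
\end{equation*}

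Next I would apply the unary operation $^+$ to both sides, obtaining $(uw^+v)^+=\left((uw)^+uv\right)^+$. The right-hand side now has the shape $(e\,(uv))^+$ where $e=(uw)^+$ is a projection, so it is ready for the third identity of \eqref{eq:id_rest}, which reads $(x^+y)^+=x^+y^+$. Taking $x=uw$ (so that $x^+=(uw)^+$) and $y=uv$, this identity gives
\begin{equation*}
\left((uw)^+uv\right)^+=(uw)^+(uv)^+.
\end{equation*}
Chaining the two displays produces exactly $(uw^+v)^+=(uw)^+(uv)^+$, which is the claimed identity.

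I do not anticipate a genuine obstacle here: the statement is an equational consequence of the restriction-semigroup axioms, and the entire argument is a two-step rewriting. The only point requiring a moment of care is the bookkeeping of which axiom to invoke — in particular, recognizing that the fourth identity should be used to expose $(uw)^+$ before the third identity can finish the job. Everything else is routine substitution, and no appeal to the derived identities \eqref{eq:aaa1} or \eqref{eq:aaa2} is needed, although \eqref{eq:aaa2} offers an equivalent route to the first display if one prefers.
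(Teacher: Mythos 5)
Your proof is correct and follows essentially the same route as the paper: convert the inner projection $w^+$ into a projection on the left of $u$ and then apply the identity $(x^+y)^+=x^+y^+$. The only (cosmetic) difference is that the paper uses the derived identity \eqref{eq:aaa2} to produce $(uw^+)^+$ and then needs \eqref{eq:aaa1} to rewrite it as $(uw)^+$, whereas your direct use of the fourth axiom of \eqref{eq:id_rest} lands on $(uw)^+$ immediately and saves that last step.
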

\begin{proof}
The desired equality follows from the calculation:
\begin{align*}
(uw^+v)^+&= ((uw^+)^+uv)^+ &\text{(by \eqref{eq:aaa2})}\\
&=(uw^+)^+(uv)^+ &\text{(by \eqref{eq:id_rest})}\\
&= (uw)^+(uv)^+&\text{(by \eqref{eq:aaa1})}.
\end{align*}  
\end{proof}

\begin{lemma}\label{lem:n26e}
Suppose that $(A,s), (B,s) \in M(X^*,X\cup\overline{X^*})$ where $B\neq A$. For each $i\in \{1,2,3\}$ we have that 
$(B,s)$ is obtained from $(A,s)$ by an application of a rewriting rule from $\to_{N_i}$ if and only if $B$ is obtained from $A$ by an application of a transformation of type ($T_i$).
\end{lemma}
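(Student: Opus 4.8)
The plan is to analyze a single elementary step of the congruence $\rho_N$ and match it with a single application of the corresponding graph transformation, uniformly in $i$. I would begin with a preliminary computation, carried out once for each $i$, describing the two sides of a relation $(p,q)\in N_i$ directly inside $M(X^*,X\cup\overline{X^*})$ by means of the operations \eqref{eq:operations}. In each case one checks that $q$ has the same $\sigma$-image as $p$ and that the graph of $q$ is obtained from the graph of $p$ by adding exactly one edge, namely the edge that $(T_i)$ inserts, based at the origin of the graph of $p$: for $N_1$ the loop $(\lambda,\overline{\lambda},\lambda)$, for $N_2$ the parallel edge $(\lambda,\overline{x},x)$, and for $N_3$ the chord $(\lambda,\overline{uv},uv)$. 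In particular $q\le p$ by Proposition~\ref{prop:properties}(2), and the extra edge is incident only to vertices already present in the graph of $p$.

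Next I would treat the direction asserting that an application of $(T_i)$ is realized by a relation from $N_i$. Suppose $B$ is obtained from $A$ by $(T_i)$, adding its edge at a vertex $v$. Since $A$ is accessible, I fix a directed path from the origin to $v$ and let $g\in M(X^*,X\cup\overline{X^*})$ be the product of the generators reading its label, so that $g=(\langle p_0\rangle,v)$ with $\langle p_0\rangle\subseteq A$. Using the representation \eqref{eq:n26c}--\eqref{eq:n26d}, I would write $(A,s)$ as a product of factors in which the prerequisite edge(s) of $(T_i)$ at $v$ occur inside a single factor of the form $(g\,p)^+$, where $p$ is the left-hand side of the appropriate relation. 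Replacing that occurrence of $p$ by the right-hand side $q$ defines a unary polynomial $\pi$ with $\pi(p)=(A,s)$, and a direct computation with \eqref{eq:operations} gives $\pi(q)=(B,s)$, the newly created edge being precisely the $v$-translate of the extra edge of $q$, i.e.\ the edge inserted by $(T_i)$. This is one application of a relation from $N_i$.

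For the converse I would invoke the standard description of the congruence generated by a set of pairs (see, e.g., \cite{BS81}): a one-step witness for $(A,s)\mathrel{\rho_N}(B,s)$ is a unary polynomial (translation) $\pi$ built from $\cdot$ and $^+$, in which the variable occurs exactly once, together with a pair $(p,q)\in N_i$, so that $(A,s)=\pi(p)$ and $(B,s)=\pi(q)$. I would then prove, by induction on the construction of $\pi$ from the elementary translations $z\mapsto cz$, $z\mapsto zc$ and $z\mapsto z^+$, that the graph of $\pi(q)$ equals the graph of $\pi(p)$ together with the single translated edge $w\cdot(\text{extra edge of }q)$, where $w\in X^*$ is the product of the second components of the factors standing to the left of the occurrence of $z$; the three elementary cases are handled respectively by left translation by the second component of $c$, by invariance under right translation, and by invariance of the graph under $^+$. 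The same induction shows that the $w$-translate of the graph of $p$ is contained in the graph of $\pi(p)=A$, so that the prerequisite edge(s) of $(T_i)$ are present at the vertex $w$, and that the second components of $\pi(p)$ and $\pi(q)$ coincide. Since $B\ne A$, the translated edge is genuinely new in $A$, whence $(T_i)$ is applicable at $w$ and produces exactly $B$.

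Finally I would remark on orientation: in each relation of $N_i$ the right-hand side carries the strictly larger graph, so reading the relation left-to-right corresponds to the edge-adding transformation $(T_i)$, which is what the biconditional pairs. The main obstacle I anticipate is the inductive bookkeeping in the converse direction---verifying that each elementary translation contributes exactly one translated copy of the extra edge (neither more, since $z$ occurs once and the operations only form unions, nor fewer), and that the left-context word $w$ is computed correctly through nested applications of $^+$. The preliminary per-relation computation and the forward construction are routine consequences of \eqref{eq:operations} and the generation result of Proposition~\ref{prop:M_generators}.
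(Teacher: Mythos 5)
Your proposal is correct and takes essentially the same approach as the paper: both arguments match a one-step application of a relation from $N_i$ with the addition of a single translated edge, by tracking the effect of the rewriting through a decomposition of $(A,s)$ into generators. Your version merely makes explicit --- via the per-relation graph computation and the induction over elementary translations in the converse direction --- the bookkeeping that the paper's proof leaves implicit in the phrase ``this is equivalent to saying that $B$ is obtained from $A$ by an application of ($T_i$)''.
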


\begin{proof} 
Suppose that $(B,s)$ is obtained from $(A,s)$ by applying a  rewriting rule \mbox{$\overline{u}\to_{N_1}\overline{\lambda}\,\overline{u}$.} This means that in a decomposition of $(A,s)$ into generators (see \eqref{eq:aaa5}, \eqref{eq:n26c} and \eqref{eq:n26d}) there is a factor $(\Gamma_{\overline{u}},u)$, whereas replacing this factor by $(\Gamma_{\overline{\lambda}},\lambda)(\Gamma_{\overline{u}},u)$ gives a decomposition of $(B,s)$ into generators. But since $B\neq A$, this is equivalent to saying that $B$ is obtained from $A$ by an application of a transformation of type ($T_1$). For rewriting rules from $\to_{N_2}$ and $\to_{N_3}$ and transformations of types (T2) and (T3) the argument is similar.
\end{proof}

\begin{proof}[Proof of Proposition \ref{prop:rho}]
Let $(A,s), (B,t) \in M(X^*,X\cup\overline{X^*})$ be such that $(A,s) \mathrel{\rho} (B,t)$. 
Since $\rho \subseteq \sigma$, it follows that $s=t$. If $A=B$, then clearly $A^{\wedge} = B^{\wedge}$. So we can suppose that $A\neq B$. Given that the relation $\to_N$ generates $\rho$, there is $n\geq 1$ and a sequence
$$
(A,s)=(A_0,s), (A_1,s),\ldots, (A_n,s)=(B,s),
$$
such that for all $i=0,\dots, n-1$ we have 
$$(A_i,s) \to_N (A_{i+1},s) \quad {\text{ or }} \quad (A_{i+1},s) \to_N (A_{i},s).$$
We can suppose that $A_{i+1}\neq A_i$ for all $i=0,\dots, n-1$. It follows from Lemma \ref{lem:n26e} that for each such $i$ one of the graphs $A_i$ and $A_{i+1}$ is obtained from the other by applying one of the transformations described in ($T_1$), ($T_2$) and ($T_3$). Therefore, $A_i^{\wedge} = A_{i+1}^{\wedge}$ and thus $A^\wedge = B^\wedge$. 

Suppose now that $s=t$ and $A^{\wedge} = B^{\wedge}$. We  show that $(A,s) \mathrel{\rho} (A^{\wedge}, s)$. Since $A^{\wedge}$ is obtained from $A$ by a finite number of applications of  transformations of types ($T_1$), ($T_2$) or ($T_3$), Lemma \ref{lem:n26e} implies that $(A^{\wedge},s)$ is obtained from $(A,s)$ by a finite number of applications of rewriting rules from $\to_N$. Therefore, $(A,s) \mathrel{\rho} (A^{\wedge}, s)$, so that $(A,s) \mathrel{\rho} (A^{\wedge}, s) = (B^{\wedge}, s) \mathrel{\rho} (B, s)$.
\end{proof}

We say that a finite accessible subgraph $\Gamma$ of $\Cay(X^*, X\cup \overline{X^*})$ is {\em $\rho$-closed}, provided that $\Gamma = \Gamma^{\wedge}$.

Proposition \ref{prop:rho} shows that every $\rho$-class of $M(X^*, X\cup \overline{X^*})$ has a unique representative $(\Gamma, s)$ where $\Gamma$ is $\rho$-closed. We obtain the following result, which provides a model for ${\mathsf{FFR}}(X)\simeq M(X^*, X\cup \overline{X^*})/\rho$.

\begin{proposition}\label{prop:model}
Elements of ${\mathsf{FFR}}(X)$ can be identified with pairs 
$$(\Gamma, s) \in M(X^*, X\cup \overline{X^*})$$ where $\Gamma$ is a $\rho$-closed subgraph of $\Cay(X^*, X\cup \overline{X^*})$, with the operations given by 
\begin{equation*}
(\Gamma,s)(\Delta,t) = ((\Gamma \cup s\Delta)^\wedge, st), \qquad
(\Gamma,s)^+ = (\Gamma,\lambda), \qquad
\mx{(\Gamma,s)} = (\Gamma_{\overline{s}}^\wedge,s).
\end{equation*}
\end{proposition}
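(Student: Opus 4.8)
The plan is to establish Proposition~\ref{prop:model} by transporting the algebraic structure on $M(X^*,X\cup\overline{X^*})/\rho$ through the bijection supplied by Proposition~\ref{prop:rho}. That proposition tells us that each $\rho$-class contains a unique pair $(\Gamma,s)$ with $\Gamma$ $\rho$-closed, so the set of $\rho$-closed pairs is in canonical bijection with the elements of the quotient. It therefore suffices to verify that the three operations displayed in the statement are exactly the images, under this bijection, of the operations $\cdot$, $^+$ and $\mx{}$ on the quotient. So first I would fix notation: given a $\rho$-closed pair $(\Gamma,s)$, it represents the $\rho$-class $[(\Gamma,s)]_\rho$, and I must compute the $\rho$-closed representative of the product, the plus, and the maximum on the level of classes.

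For the product, I would start from the formula $(A,s)(B,t)=(A\cup sB,st)$ in $M(X^*,X\cup\overline{X^*})$ from \eqref{eq:operations}. Since $\rho$ is a $(\cdot\,,^+)$-congruence, multiplying the classes of $(\Gamma,s)$ and $(\Delta,t)$ gives the class of $(\Gamma\cup s\Delta,st)$, whose unique $\rho$-closed representative is, by Proposition~\ref{prop:rho}, the pair $((\Gamma\cup s\Delta)^\wedge,st)$; here I would note that $\Gamma\cup s\Delta$ is again a finite accessible subgraph (accessibility of $s\Delta$ as a translate is inherited through the path reaching $s$ inside $\Gamma$), so the closure is defined. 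For $^+$ the computation is immediate: $(\Gamma,s)^+=(\Gamma,\lambda)$ by \eqref{eq:operations}, and $\Gamma$ is already $\rho$-closed by hypothesis, so no further closure is needed and the representative is $(\Gamma,\lambda)$. The operation $\mx{}$ requires identifying the maximum element of the $\sigma$-class of $(\Gamma,s)$ in the quotient; by Proposition~\ref{prop:mod1} this is $\overline{\sigma^\natural((\Gamma,s))}=\overline{s}$, whose preferred preimage in $M(X^*,X\cup\overline{X^*})$ is the generator $(\Gamma_{\overline{s}},s)$, and passing to the $\rho$-closed representative yields $(\Gamma_{\overline{s}}^\wedge,s)$.

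The main technical point to get right is the product formula, specifically that the closure and the union interact well enough for $((\Gamma\cup s\Delta)^\wedge,st)$ to be genuinely the $\rho$-closed representative of the product of classes rather than merely $\rho$-related to it. Here the key facts are that $\rho$ is a congruence, that Proposition~\ref{prop:rho} guarantees uniqueness of the $\rho$-closed representative within a class, and that the closure operator is idempotent and monotone (Remark~\ref{rem:closure}), so $(\Gamma\cup s\Delta)^\wedge$ is $\rho$-closed and $\rho$-equivalent to $\Gamma\cup s\Delta$. I expect this verification to be the main obstacle, since one must be careful that $s\Delta$ is formed using the genuine monoid multiplication in $X^*$ and that the resulting graph lies in $\mathcal{X}_{X\cup\overline{X^*}}$; once that is in place, the equality of the displayed product with the transported product is a direct appeal to Proposition~\ref{prop:rho}.

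Finally, I would assemble these three computations into the statement by invoking the bijection once more: since the operations on the right-hand sides produce $\rho$-closed pairs that represent exactly the classes obtained by applying the quotient operations, the map sending each $\rho$-closed pair $(\Gamma,s)$ to its class is an isomorphism of algebras in the signature $(\cdot\,,^+,\mx{},\lambda)$, where $\lambda$ is carried to the maximum projection $(\Gamma_{\overline{\lambda}},\lambda)$ as recorded in the discussion preceding the proposition. This identification is precisely the assertion of Proposition~\ref{prop:model}, so the verification of the three formulas completes the proof.
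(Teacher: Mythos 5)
Your proposal is correct and follows essentially the same route as the paper, which derives Proposition~\ref{prop:model} directly from the uniqueness of $\rho$-closed representatives established in Proposition~\ref{prop:rho} and transports the operations of $M(X^*,X\cup\overline{X^*})$ (and the $\mx{}$ operation identified in Proposition~\ref{prop:mod1}) through that bijection. The paper in fact states the result without a written proof, treating exactly the verifications you spell out as immediate.
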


From now on, let ${\mathbb T}(X)$ denote the absolutely free algebra of the signature $(\cdot\,,^+,\mx{},\lambda)$. To formulate the result on the decidability of the word problem in $\mathsf{FFR}(X)$, we define the {\em complexity} ${\mathsf{c}}(u)$ of a term $u\in {\mathbb T}(X)$. 
By definition, the elements of $X$ and $\lambda$ (the constant) have complexity~$1$. If $u,v\in {\mathbb T}(X)$ are terms then ${\mathsf{c}}(u\cdot v) = {\mathsf{c}}(u) + {\mathsf{c}}(v) +1$ and ${\mathsf{c}}(\mx{u}) = {\mathsf{c}}(u^+) = {\mathsf{c}}(u)+1$. 

\begin{theorem}\label{thm:word}
The word problem for $\mathsf{FFR}(X)$ is decidable with an algorithm of the time complexity $O(n^3)$ where $n = {\mathrm{max}}\{{\mathsf{c}}(u), {\mathsf{c}}(v)\}$ and $u,v\in {\mathbb T}(X)$ are the input terms.
\end{theorem}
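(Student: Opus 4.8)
The plan is to turn the algebraic model of $\mathsf{FFR}(X)$ from Proposition~\ref{prop:model} into an explicit algorithm and bound its running time. By that proposition, every element of $\mathsf{FFR}(X)$ is uniquely represented by a $\rho$-closed pair $(\Gamma,s)$, and two input terms are equal in $\mathsf{FFR}(X)$ if and only if their canonical representatives coincide. So the word problem reduces to: (i) computing, for a term $u\in\mathbb{T}(X)$ of complexity $n$, its value $[u]=(\Gamma,s)$ in the model; and (ii) comparing the two resulting $\rho$-closed pairs for equality. Step (ii) is cheap once (i) is done, since comparing two finite labeled graphs (as vertex/edge sets of $\Cay(X^*,X\cup\overline{X^*})$, with vertices being words in $X^*$) takes time polynomial in their size; the content is the cost analysis of (i).

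First I would recurse on the structure of $u$, evaluating subterms bottom-up and using the operations of Proposition~\ref{prop:model}. The generators $x$ and the constant $\lambda$ evaluate to $\Gamma_x^{\wedge}$ and $(\Gamma_{\overline{\lambda}},\lambda)$ respectively (constant-size), the operations $^+$ and $\mx{}$ are cheap (for $\mx{}$ one forms $\Gamma_{\overline{s}}^{\wedge}$, which is essentially $\Gamma_{\overline{s}}$ plus the loop $(\lambda,\overline{\lambda},\lambda)$), and the only expensive operation is the product $(\Gamma,s)(\Delta,t)=((\Gamma\cup s\Delta)^{\wedge},st)$. The key quantitative observation is a size bound: I would first show that the number of vertices and edges of the graph underlying $[u]$ is $O(\mathsf{c}(u))$. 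This is because every vertex of $\Gamma$ is a word in $X^*$ of length bounded by the total length of the generators appearing in $u$, and the closure $^{\wedge}$ (by ($T_1$), ($T_2$), ($T_3$)) adds no new vertices and at most one extra $\overline{\lambda}$-loop per vertex, one $\overline{x}$-edge per $x$-edge, and bounded transitive $\overline{uv}$-edges — and since there are at most two edges between any ordered pair of vertices of $\Cay(X^*,X\cup\overline{X^*})$, the edge count is also $O(\mathsf{c}(u))$. Thus each intermediate graph has size $O(n)$.

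The heart of the cost bound is the closure operation. I would argue that computing $(\Gamma\cup s\Delta)^{\wedge}$ from a graph of size $O(n)$ costs $O(n^2)$: forming $\Gamma\cup s\Delta$ (relabeling $\Delta$ by the prefix $s$ and taking the union) is linear, and the closure loop applies ($T_1$)–($T_3$) until saturation. Since each application adds a new edge and the total number of edges is capped at $O(n)$ (at most two edges per ordered vertex pair, $O(n)$ vertices, so $O(n^2)$ possible edges but in fact $O(n)$ realized edges because the graph stays accessible with bounded branching along the word structure), the loop runs $O(n)$ times, and each step scans the current edge set to find an applicable transformation in $O(n)$ time; hence $O(n^2)$ per product. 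Because the term $u$ contains $O(n)$ multiplication nodes, the bottom-up evaluation performs $O(n)$ products, each at $O(n^2)$, giving the claimed $O(n^3)$ bound overall, with the final comparison (ii) absorbed into a cheaper term.

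The main obstacle I anticipate is making the closure cost estimate rigorous rather than heuristic: one must verify that saturation under ($T_1$)–($T_3$) genuinely terminates in $O(n)$ edge-additions on a graph whose vertex set is fixed throughout the closure (no transformation introduces new vertices), and that each of ($T_1$), ($T_2$), ($T_3$) can be \emph{tested for applicability} efficiently — in particular ($T_3$), which requires detecting a length-two dashed path $\overline{u}\,\overline{v}$ with absent shortcut $\overline{uv}$, so one must argue that scanning for such configurations over an $O(n)$-edge graph costs only $O(n)$ per pass and that the confluence noted in Remark~\ref{rem:closure} means the arbitrary choice of transformation order does not inflate the count. Once the size invariant ``$|\Gamma|=O(\mathsf{c}(u))$ is preserved by all operations'' is established and combined with the $O(n)$-addition bound on closure, the three nested linear factors yield $O(n^3)$, which is exactly the target.
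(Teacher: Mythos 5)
Your overall strategy (reduce to the graph model, compute a canonical representative for each term, compare) matches the paper's, but the quantitative core of your argument contains a genuine error: the claim that the closed graph has $O(\mathsf{c}(u))$ edges is false. Take $u = \mx{x}\cdot\mx{x}\cdots\mx{x}$ ($n$ factors), so that $\Gamma_{\tilde{u}}$ is the dashed path $\lambda \to x \to x^2 \to \cdots \to x^n$ labelled by $\overline{x}$ throughout. Saturation under ($T_3$) then forces the edge $(x^i,\overline{x^{j-i}},x^j)$ for \emph{every} pair $i<j$, so $\Gamma_{\tilde{u}}^{\wedge}$ has $\Theta(n^2)$ edges. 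Your inference ``at most two edges between any ordered pair of vertices, hence $O(n)$ edges'' is a non sequitur: two edges per pair over $O(n)$ vertices gives $O(n^2)$, and accessibility does not help. This collapses the chain of estimates you build on it: the closure loop performs up to $\Theta(n^2)$ edge-additions rather than $O(n)$, so your claimed $O(n^2)$ cost per product is unsupported, and your final comparison step is not ``absorbed into a cheaper term'' either --- naively comparing two eagerly closed graphs with $\Theta(n^2)$ edges each costs $O(n^4)$.

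The paper sidesteps both problems by organizing the computation differently. It does not evaluate bottom-up in the quotient model with a closure at every product node; instead it computes the \emph{raw} value $(\Gamma_{\tilde{u}},[\tilde{u}]_{X^*})$ in $M(X^*,X\cup\overline{X^*})$ (a graph with only $O(n)$ edges), takes the closure exactly once at the end (explicitly acknowledging that $\Gamma_{\tilde{u}}^{\wedge}$ has $O(n^2)$ edges, obtained in $O(n^2)$ additions), and then tests equality of the two closed graphs by checking the inclusions $\Gamma_{\tilde{u}}\subseteq\Gamma_{\tilde{v}}^{\wedge}$ and $\Gamma_{\tilde{v}}\subseteq\Gamma_{\tilde{u}}^{\wedge}$ --- which suffice by the closure-operator properties of Remark~\ref{rem:closure} --- so that only $O(n)$ edges are each tested against an $O(n^2)$-edge graph, giving $O(n^3)$. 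If you want to keep your bottom-up scheme, you would need to replace your size invariant by ``$O(n)$ vertices and $O(n^2)$ edges,'' re-do the per-product cost with that bound (or better, carry only the unclosed union through the recursion, which the closure-operator identity $(\Gamma^{\wedge}\cup s\Delta^{\wedge})^{\wedge}=(\Gamma\cup s\Delta)^{\wedge}$ justifies), and adopt the asymmetric inclusion test at the end; as written, the proposal does not establish the $O(n^3)$ bound.
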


\begin{proof} 
Let $u\in {\mathbb T}(X)$ be a term. Suppose that $\mx{v}$, where $v\in {\mathbb T}(X)$, is a subterm of $u$. We view the value $[v]_{X^*}$ of $v$ in $X^*$ as an element of ${\mathbb T}(X)$. Hence $v$ and $[v]_{X^*}$ are terms in ${\mathbb T}(X)$ which have the same image under the canonical morphism to $X^*$. Therefore, these two elements are $\sigma$-related in $\mathsf{FFR}(X)$ and thus an application of the $\mx{}$-operation to both of these elements yields the same result. We conclude that the equality $\mx{v} = \mx{([v]_{X^*})}$ holds in $\mathsf{FFR}(X)$, so that we can simplify $u$ by replacing the subterm 
$\mx{v}$ by the subterm $\mx{([v]_{X^*})}.$ If, for example, $v=xy^+\mx{(\mx{(xz)}y^+)}x^+$ then  $[v]_{X^*} = xxz$. We can thus replace $\mx{(xy^+\mx{(\mx{(xz)}y^+)}x^+)}$ 
by $\mx{(xxz)}$. After several such steps,
we obtain a term $u' \in {\mathbb T}(X)$ such that any its subterm of the form $\mx{t}$ is such that $t\in X^*$.  We then create a new term, $\tilde{u}$, in the signature $(\cdot\,,^+)$ over the alphabet $X\cup \overline{X^*}$ by replacing every occurrence of 
$\mx{v}$, where $v\in X^*$, by $\overline{v}$ and replacing each $\lambda$ by $\overline{\lambda}$. Then we assign to $\tilde{u}$ its value $(\Gamma_{\tilde{u}}, [\tilde{u}]_{X^*})$ in $M(X^*, X\cup \overline{X^*})$. The value of $u$ in $M(X^*, X\cup \overline{X^*})/\rho$ is then the pair $(\Gamma_{\tilde{u}}^{\wedge}, [\tilde{u}]_{X^*})$.  

To check if terms $u$ and $v$ of ${\mathbb T}(X)$ have the same value in ${\mathsf{FFR}}(X)$, we check if  $[\tilde{u}]_{X^*} =  [\tilde{v}]_{X^*}$ and if  $\Gamma_{\tilde{u}}^{\wedge}=\Gamma_{\tilde{v}}^{\wedge}$. We construct $(\Gamma_{\tilde{u}}^{\wedge}, [\tilde{u}]_{X^*})$ from $u$ as is described in the previous paragraph:
\begin{enumerate}
\item Produce the term $u'$ in at most ${\mathsf{c}}(u)$ steps.
\item  Produce the term $\tilde{u}$ in at most ${\mathsf{c}}(u)$  steps. 
\item Produce the element $(\Gamma_{\tilde{u}}, [\tilde{u}]_{X^*})$ in  at most ${\mathsf{c}}(u)$ steps.
\end{enumerate}
The definition of $\Gamma_{\tilde{u}}$ implies that it has at most ${\mathsf{c}}(u)$ vertices. To get the graph $\Gamma_{\tilde{u}}^{\wedge}$ from $\Gamma_{\tilde{u}}$, for each vertex of the latter we possibly add a loop at this vertex, and for each pair of its distinct vertices we possibly add an edge from one of them to the other. This is done in at most ${\mathsf{c}}(u)$ + $\binom{{\mathsf{c}}(u)}{2}$ steps which is quadratic in ${\mathsf{c}}(u)$. It follows in particular that the graph $\Gamma_{\tilde{u}}^{\wedge}$ has $O(n^2)$ edges.

To check if $\Gamma_{\tilde{u}}^{\wedge} = \Gamma_{\tilde{v}}^{\wedge}$, it suffices to check the inclusion $\Gamma_{\tilde{u}}\subseteq \Gamma_{\tilde{v}}^{\wedge}$ and the dual inclusion. For each edge of $\Gamma_{\tilde{u}}$ we use at most $O(n^2)$ steps to check if it is an edge of $\Gamma_{\tilde{v}}^{\wedge}$. It follows that we check if the two graphs are equal in $O(n^3)$ steps. Furthermore, we check if $[\tilde{u}]_{X^*} =  [\tilde{v}]_{X^*}$ in $O(n)$ steps. The statement follows.
\end{proof}

\begin{remark}\label{rem:d1b}
To obtain the model of free $X$-generated $F$-restriction monoid, we must also take into account the relation which says that the generator $\overline{\lambda}$ is the identity element. Closed graphs then have loops labeled by $\overline{\lambda}$ at all of their vertices. To avoid these loops, we can exclude $\overline{\lambda}$ from the set of generators  and work with $M^{\lambda}(X^*, X \cup \overline{X^+})$.\footnote{This parallels the approach of our earlier work \cite{KLF25} about $F$-birestriction monoids.}
\end{remark}

\section{The free \texorpdfstring{$X$}{X}-generated strong \texorpdfstring{$F$}{F}-restriction semigroup}\label{sec:strong}

In this section, we adapt the approach of Section \ref{sec:F-restr} to obtain a model and solve the word problem for the free $X$-generated strong $F$-restriction semigroup $\mathsf{FFR}_s(X)$.

Similarly as in Lemma \ref{lem:free}, one shows that $\mathsf{FFR}_s(X)/\sigma \simeq X^*$. Remark \ref{rem:generated} implies that $\mathsf{FFR}_s(X)$ is  
$(X\cup \overline{X^*})$-generated as a restriction semigroup. Given that $\mathsf{FFR}_s(X)$ is $F$-restriction, it is proper by Proposition \ref{prop:n25}. The universal property of $M(X^*, X\cup \overline{X^*})$ now implies that
$\mathsf{FFR}_s(X)$ is an $(X\cup \overline{X^*})$-canonical $(\cdot\,,^+)$-quotient of $M(X^*, X\cup \overline{X^*})$. Let $\rho_s$ denote the $(\cdot\,,^+)$-congruence on $M(X^*, X\cup \overline{X^*})$ such that the quotient by it is isomorphic to $\mathsf{FFR}_s(X)$. Since $X^*$ is a canonical quotient of $\mathsf{FFR}_s(X)$, we have $\rho_s\subseteq \sigma$. 

By definition, $\rho_s$ is the coarsest $(\cdot\,, ^+)$-congruence on $M(X^*, X\cup \overline{X^*})$ which makes the quotient by it a strong $F$-restriction semigroup with the elements $\overline{u}\in \overline{X^*}$ being the maximum elements of their $\sigma$-classes. 
We consider the relation 
\begin{align*}
\to_{N_4} & =  \{(\overline{u}\,\overline{v}, \overline{u}^+\overline{uv})\colon u,v\in X^+\}
\end{align*}
on $M(X^*, X\cup \overline{X^*})$ and denote the union of the relations $\to_{N_1}$, $\to_{N_2}$ and $\to_{N_4}$ by $\to_{N_s}$.
Let $\rho_{N_s}$ be the congruence on $M(X^*,X\cup \overline{X^*})$ generated by $\to_{N_s}$. Since the latter congruence is contained in $\rho_s$, we have that $\rho_{N_s}\subseteq \rho_s$.

\begin{lemma}
Let $\rho$ be the congruence on $M(X^*,X\cup \overline{X^*})$, such that $M(X^*,X\cup \overline{X^*})/\rho\simeq \mathsf{FFR}(X)$, see Section \ref{sec:F-restr}. Then 
$\rho \subseteq \rho_{N_s}$.
\end{lemma}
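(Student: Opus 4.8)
The plan is to show the containment $\rho \subseteq \rho_{N_s}$ by demonstrating that the quotient $M(X^*, X\cup \overline{X^*})/\rho_{N_s}$ satisfies all the defining conditions that characterize $\rho$, namely that it is an $F$-restriction semigroup (not merely strong) in which the elements $\overline{u}\in\overline{X^*}$ are the maximum elements of their $\sigma$-classes. Since $\rho$ was defined in Section~\ref{sec:F-restr} as the coarsest $(\cdot\,,^+)$-congruence with exactly this property, establishing that $\rho_{N_s}$ also makes the quotient an $F$-restriction semigroup with the $\overline{u}$ being maxima will immediately yield $\rho\subseteq\rho_{N_s}$.

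The concrete route I would take is to recall that $\rho=\rho_N$ where $N = N_1\cup N_2\cup N_3$, as established in Proposition~\ref{prop:mod1}. Since $\rho_{N_s}$ already contains $N_1$ and $N_2$, it suffices to prove that each relation of $N_3$ holds in $M(X^*, X\cup \overline{X^*})/\rho_{N_s}$. Recall $N_3$ consists of pairs $(\overline{u}\,\overline{v}, (\overline{u}\,\overline{v})^+\overline{uv})$ for $u,v\in X^+$, while $N_4$ consists of pairs $(\overline{u}\,\overline{v}, \overline{u}^+\overline{uv})$. So the heart of the matter is to derive, modulo $\rho_{N_s}$, the relation $\overline{u}\,\overline{v} = (\overline{u}\,\overline{v})^+\overline{uv}$ from the relation $\overline{u}\,\overline{v} = \overline{u}^+\overline{uv}$. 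First I would apply the $^+$ operation to the $N_4$ relation to compute $(\overline{u}\,\overline{v})^+ = (\overline{u}^+\overline{uv})^+ = \overline{u}^+(\overline{uv})^+$, using the restriction-semigroup identities (in particular \eqref{eq:aaa1} and the projection identities of \eqref{eq:id_rest}). Then I would compute
\begin{equation*}
(\overline{u}\,\overline{v})^+\overline{uv} = \overline{u}^+(\overline{uv})^+\overline{uv} = \overline{u}^+\overline{uv} = \overline{u}\,\overline{v},
\end{equation*}
where the middle equality uses $e^+e=e$ applied to $e=\overline{uv}$ and the final equality is exactly the $N_4$ relation. This shows every $N_3$ relation is a consequence of $N_4$ together with the restriction identities, hence holds modulo $\rho_{N_s}$, giving $N_3\subseteq\rho_{N_s}$ and therefore $\rho=\rho_{N} = \rho_{N_1\cup N_2\cup N_3}\subseteq\rho_{N_s}$.

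The main obstacle I anticipate is purely bookkeeping: ensuring that the algebraic manipulation $(\overline{u}^+\overline{uv})^+ = \overline{u}^+(\overline{uv})^+$ is justified entirely by the restriction-semigroup identities, so that it is valid in the quotient, and confirming that the relation $\overline{u}\,\overline{v}=\overline{u}^+\overline{uv}$ is indeed the defining form of $N_4$ with its factors in the right order. An alternative, perhaps cleaner, strategy would be to verify directly that $M(X^*,X\cup\overline{X^*})/\rho_{N_s}$ is $F$-restriction by adapting the argument of Proposition~\ref{prop:mod1}: since $N_4$ still forces $\overline{uv}\geq \overline{u}^+\overline{uv}=\overline{u}\,\overline{v}$, and $N_1$, $N_2$ remain in force, the inequalities in \eqref{eq:n26a} continue to hold, so the same Case~1/Case~2 analysis shows each $\sigma$-class has a maximum and the $\overline{u}$ are those maxima; this is exactly the property defining $\rho$ as the coarsest such congruence, yielding $\rho\subseteq\rho_{N_s}$ without isolating $N_3$ at all. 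I would present the first, more direct derivation as the main proof and mention the second as a remark if space permits.
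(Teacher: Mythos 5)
Your proposal is correct and follows essentially the same route as the paper: both reduce the claim to showing $N_3\subseteq\rho_{N_s}$ and then derive each relation $(\overline{u}\,\overline{v},(\overline{u}\,\overline{v})^+\overline{uv})$ from the corresponding $N_4$ relation using the restriction-semigroup identities, differing only in the order in which $x^+x=x$ and $(x^+y)^+=x^+y^+$ are applied. The only cosmetic point is that the first equality in your displayed chain is really a $\rho_{N_s}$-relation (it invokes the $N_4$ relation under $^+$), so it should be written with $\mathrel{\rho_{N_s}}$ rather than $=$.
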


\begin{proof}
Since $\rho$ is generated by $\to_N$, we need to show that $\to_N$ is contained in  $\rho_{N_s}$. Clearly, $\to_{N_1},\to_{N_2}$ are contained in $\to_{N_S}\subseteq\rho_{N_s}$, so it suffices to verify that $\to_{N_3}\subseteq \rho_{N_s}$. 
Suppose $\overline{u}\,\overline{v}\mathrel{\to_{N_3}}(\overline{u}\,\overline{v})^+\overline{uv}$. We calculate:
$$
\overline{u}\,\overline{v} = (\overline{u}\,\overline{v})^+\overline{u}\,\overline{v} \to_{N_4} (\overline{u}\,\overline{v})^+\overline{u}^+\overline{uv}= \overline{u}^+(\overline{u}\,\overline{v})^+\overline{uv}=(\overline{u}^+\overline{u}\,\overline{v})^+\overline{uv} = (\overline{u}\,\overline{v})^+\overline{uv},
$$
so that $\overline{u}\,\overline{v} \to_{N_4} (\overline{u}\,\overline{v})^+\overline{uv}$. It follows that
$(\overline{u}\,\overline{v},(\overline{u}\,\overline{v})^+\overline{uv})\in \rho_{N_s}$, as desired.
\end{proof}

\begin{proposition}\label{prop:mod2}
$M(X^*, X\cup \overline{X^*})/\rho_{N_s}$ is an $X$-generated strong $F$-restriction semigroup. Consequently, $\rho_s=\rho_{N_s}$ and thus $M(X^*, X\cup \overline{X^*})/\rho_{N_s}$ is $X$-canonically $(\cdot\,,^+,\mx{},\lambda)$-isomorphic to ${\mathsf{FFR}}_s(X)$.
\end{proposition}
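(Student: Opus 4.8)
The plan is to mirror the proof of Proposition~\ref{prop:mod1}, inserting one extra step in which the strong law \eqref{eq:strong} is verified.

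\emph{Step 1: $M(X^*, X\cup \overline{X^*})/\rho_{N_s}$ is $F$-restriction.} By the preceding lemma $\rho\subseteq \rho_{N_s}$, so the whole generating set $N=N_1\cup N_2\cup N_3$ of $\rho$ lies in $\rho_{N_s}$; in particular the inequalities recorded in \eqref{eq:n26a} continue to hold modulo $\rho_{N_s}$. The two-case argument of Proposition~\ref{prop:mod1} (Case~1 when $u$ is not a projection, Case~2 when it is, each time applying Lemma~\ref{lem:generation}) therefore goes through unchanged and shows that $\overline{\sigma^{\natural}(u)}$ is the maximum element of the $\sigma$-class of $u$. Since every relation in $N_s$ joins two elements with the same second coordinate, we have $N_s\subseteq\sigma$, hence $\rho_{N_s}\subseteq\sigma$ and the maximum reduced quotient is again $X^*$; thus the elements $\overline{\sigma^{\natural}(u)}$ really do make the quotient an $F$-restriction semigroup.

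\emph{Step 2: the strong law.} Substituting $\mx{u}=\overline{\sigma^{\natural}(u)}$ and writing $a=\sigma^{\natural}(s)$, $b=\sigma^{\natural}(t)$, the law \eqref{eq:strong} becomes the family of identities $\overline{a}\,\overline{b}=\overline{a}^{+}\,\overline{ab}$ indexed by $a,b\in X^*$. When $a,b\in X^+$ this is exactly a defining relation from $N_4$, which lies in $\rho_{N_s}$ by construction, so the generic case is immediate. The boundary instances, where $a$ or $b$ is the empty word, must be dealt with separately using $N_1$ together with the restriction identities $x^+x=x$ and $(\overline{\lambda})^+=\overline{\lambda}$. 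I expect this reduction of the universally quantified law \eqref{eq:strong} to the single generating family $N_4$ — and in particular the careful treatment of the boundary cases, which is exactly where the difference between the varieties $\mathbf{FR}$ and $\mathbf{FR}_s$ is felt — to be the main obstacle in the argument.

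\emph{Step 3: conclusion.} As in Proposition~\ref{prop:mod1}, the quotient is $X$-generated as a strong $F$-restriction semigroup: it is $(X\cup\overline{X^*})$-generated as a restriction semigroup, and $\overline{w}=\mx{w}$ for $w\in X^+$ while $\overline{\lambda}=\mx{(x^+)}$, so the generators from $\overline{X^*}$ are recovered from the signature $(\cdot\,,^+,\mx{},\lambda)$. Being an $X$-generated strong $F$-restriction semigroup, $M(X^*, X\cup \overline{X^*})/\rho_{N_s}$ is a canonical $(\cdot\,,^+,\mx{},\lambda)$-quotient of $\mathsf{FFR}_s(X)=M(X^*, X\cup \overline{X^*})/\rho_s$, whence $\rho_s\subseteq\rho_{N_s}$. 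Combined with the inclusion $\rho_{N_s}\subseteq\rho_s$ noted before the statement, this gives $\rho_s=\rho_{N_s}$ and the asserted canonical $(\cdot\,,^+,\mx{},\lambda)$-isomorphism with $\mathsf{FFR}_s(X)$.
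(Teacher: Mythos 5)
Your proposal follows the same route as the paper: show the quotient is an $X$-generated $F$-restriction semigroup, verify the strong law \eqref{eq:strong}, and conclude from the two inclusions between $\rho_s$ and $\rho_{N_s}$. For your Step~1 the paper is more economical: since $\rho\subseteq\rho_{N_s}$, the quotient $M(X^*,X\cup\overline{X^*})/\rho_{N_s}$ is a $(\cdot\,,^+,\mx{},\lambda)$-quotient of $M(X^*,X\cup\overline{X^*})/\rho\simeq\mathsf{FFR}(X)$ and varieties are closed under quotients, so re-running the two-case argument of Proposition~\ref{prop:mod1} is not needed (though it is harmless).

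The substantive problem is the boundary case you flag in Step~2 and leave open. The case $a=\lambda$ is indeed trivial, since $\overline{\lambda}$ is a projection and $\overline{\lambda}^{+}=\overline{\lambda}$. The case $b=\lambda$ is not closable by the tools you name. It reduces to $\overline{a}\,\overline{\lambda}=\overline{a}^{+}\overline{a}=\overline{a}$ for $a\in X^{+}$, i.e.\ to the claim that $(\overline{a}\,\overline{\lambda},\overline{a})\in\rho_{N_s}$. This fails: by the forward direction of Proposition~\ref{prop:rhos}, the map $(A,s)\mapsto (A^{\wedge_s},s)$ is constant on $\rho_{N_s}$-classes; the graph of $\overline{a}\,\overline{\lambda}$ contains the loop $(a,\overline{\lambda},a)$, whereas none of ($T_1$)--($T_4$) (with $u,v$ ranging over $X^{+}$, as dictated by $N_1$ and $N_4$) ever creates an outgoing edge at the sink vertex $a$ of $\Gamma_{\overline{a}}$, so $\Gamma_{\overline{a}}^{\wedge_s}$ does not contain that loop. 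On the other hand, substituting the constant $\lambda$ for $t$ in \eqref{eq:strong} and using $\mx{\lambda}=\lambda$ and $\mx{(s\lambda)}=\mx{s}$ shows that every strong $F$-restriction semigroup satisfies $\mx{s}\,\lambda=\mx{s}$. Hence this instance of \eqref{eq:strong} genuinely fails in $M(X^*,X\cup\overline{X^*})/\rho_{N_s}$ with $N_s$ as given; repairing the argument requires enlarging $N_s$ by the relations $(\overline{u}\,\overline{\lambda},\overline{u})$ for $u\in X^{+}$ (equivalently, admitting $v=\lambda$ in $N_4$ and in ($T_4$), which forces a loop labelled $\overline{\lambda}$ at the endpoint of every $\overline{X^{+}}$-labelled edge of a closed graph). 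Be aware that the paper's own proof writes the chain $\overline{\sigma^{\natural}(x)}\,\overline{\sigma^{\natural}(y)}\mathrel{\rho_{N_s}}\overline{\sigma^{\natural}(x)}^{+}\,\overline{\sigma^{\natural}(x)\sigma^{\natural}(y)}$ only for the generic case and is silent on $\sigma^{\natural}(y)=\lambda$; your instinct that this is where the real difficulty sits is exactly right, but ``dealt with separately using $N_1$ and the restriction identities'' does not close it.
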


\begin{proof}
Since $\rho \subseteq \rho_{N_s}$, $M(X^*, X\cup \overline{X^*})/\rho_{N_s}$ is an $X$-generated $F$-restriction semigroup and the elements $\overline{u}\in \overline{X^*}$ are the maximum elements of their $\sigma$-classes. We show that this quotient satisfies the identity $\mx{x}\mx{y} = (\mx{x})^+\mx{(xy)}$. We have $\mx{x} = \overline{\sigma^{\natural}(x)}$, $\mx{y} = \overline{\sigma^{\natural}(y)}$ and $\mx{(xy)} = \overline{\sigma^{\natural}(xy)} = \overline{\sigma^{\natural}(x)\sigma^{\natural}(y)}$ for all $x,y\in M(X^*, X\cup \overline{X^*})$. It follows that:
$$
\mx{x}\mx{y}  = \overline{\sigma^{\natural}(x)}\, \overline{\sigma^{\natural}(y)}
\mathrel{\rho_{N_s}} \overline{\sigma^{\natural}(x)}^+ \overline{\sigma^{\natural}(x)\sigma^{\natural}(y)} = (\mx{x})^+\mx{(xy)}.
$$
Therefore,  $M(X^*, X\cup \overline{X^*})/\rho_{N_s}$  is a  strong $F$-restriction semigroup. Hence, $\rho_s\subseteq \rho_{N_s}$, and the statement follows.
\end{proof}

We now describe a  model for $M(X^*, X\cup \overline{X^*})/\rho_s \simeq {\mathsf{FFR}}_s(X)$ based on the Cayley graph $\Cay(X^*, X\cup \overline{X^*})$.
Let $\Gamma$ be a finite accessible subgraph of $\Cay(X^*, X\cup \overline{X^*})$ with at least one edge. We introduce the following transformation of type ($T_4$):

($T_4$) Suppose that $\Gamma$ has the 
edges $(w, \overline{u}, w u)$ and $(w, \overline{uv}, w uv)$, but does not have the edge $(w u, \overline{v},w uv)$. We define
 $\Gamma'$ by adding to $\Gamma$ the edge $(w u, \overline{v},w uv)$, see Figure ~\ref{fig:c4a}.

\begin{figure}[!ht]
\begin{tikzpicture}[scale=0.9]
\begin{scope}[every node/.style={circle,fill,inner sep=1.5pt}]
\node[label= {[label distance=0.1cm]:$w$}] (A) at (0,0) {};
\node[label= {[label distance=0.01cm]:{$w u$}}] (B) at (2.5,0) {};
\node[label= {[label distance=-0.1cm]:{$w uv$}}] (C) at (5,0) {};
\end{scope}

\begin{scope}[>={Stealth[black]}, every node/.style={fill=white,circle,scale=0.7}, every edge/.style={draw, thick}]
\path [->] (A) edge[font=\large, dashed,bend left=25] node[above=0.5pt] {$\overline{u}$} (B);
\path [->] (A) edge[font=\large, dashed, bend right=25] node[below=0.5pt] {$\overline{uv}$} (C);
\end{scope}

\node (P) at (6, 0) {};
\node (Q) at (8, 0) {};

\begin{scope},
\draw[-stealth,thick,decorate,decoration={snake,amplitude=.3mm}] (P) -- (Q) node[above,midway]{
};
\end{scope}
\begin{scope}[every node/.style={circle,fill,inner sep=1.5pt},xshift=9cm]
\node[label={[label distance=0.1cm]:{$w$}}] (A) at (0,0) {}; 
\node[label={[label distance=0.01cm]:{$w u$}}] (B) at (2.5,0) {};
\node[label={[label distance=-0.1cm]:{$w uv$}}] (C) at (5,0) {};
\end{scope}

\begin{scope}[>={Stealth[black]}, every node/.style={fill=white,circle,scale=0.7}, every edge/.style={draw, thick}]
\path [->] (A) edge[font=\large, dashed,bend left=25] node[above=0.5pt] {$\overline{u}$} (B);
\path [->] (B) edge[font=\large, dashed, bend left=25] node[above=0.5pt] {$\overline{v}$} (C);
\path [->] (A) edge[font=\large, dashed, bend right=25] node[below=0.5pt] {$\overline{uv}$} (C);
\end{scope}

\end{tikzpicture}
\caption{Illustration of ($T_4$).}\label{fig:c4a}
\end{figure}
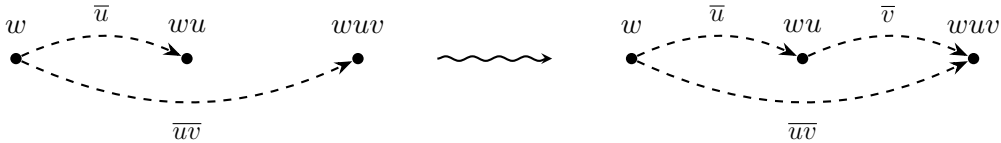

Just as in the previous section, if any of the transformations of type ($T_1$), ($T_2$), ($T_3$) or ($T_4$) can be applied to $\Gamma$, we (arbitrarily) choose such a transformation and apply it. We then denote the obtained graph $\Gamma'$ by $\Gamma$ and repeat this step. Similarly as before, the process is confluent and terminates in finitely many steps at a finite accessible subgraph  $\Gamma^{\wedge_s} \supseteq \Gamma$  of $\Cay(X^* , X\cup \overline{X^*})$.

The following proposition is analogous to Proposition~\ref{prop:rho}.

\begin{proposition}\label{prop:rhos}
Let $(A, a), (B, b) \in M(X^*,X\cup\overline{X^*})$. Then
$(A, a) \mathrel{\rho_s} (B, b)$ if and only if $ a=b$ and $A^{\wedge_s}=B^{\wedge_s}.$    
\end{proposition}

\begin{proof}
Let $(A, a), (B,b) \in M(X^*,X\cup\overline{X^*})$ be such that $(A, a) \mathrel{\rho_s} (B,b)$. As $\rho_{N_s}\subseteq \sigma$ it follows that $a=b$.
As in the proof of Proposition~\ref{prop:rho}, there exists $n \geq 1$ and a sequence
$$(A, a)=(A_0,a),\dots,(A_n, a)=(B, b),$$
such that for all $i=0,\dots, n-1$ we have
$$(A_i,a) \to_{N_s} (A_{i+1},a) \quad {\text{ or }} \quad (A_{i+1},a) \to_{N_s} (A_i,a),
$$ where $A_{i+1}\neq A_i$ for all $i$. 
It suffices to prove that $A_i^{\wedge_s} = A_{i+1}^{\wedge_s}$ for all $i\in\{0,\dots, n-1\}$.
Rewriting rules from $\to_{N_1}$ are treated in the same way as in the proof of Proposition~\ref{prop:rho}, and rewriting rules from $\to_{N_2}$ can be treated similarly. We are left to consider the rewriting rules from $\to_{N_4}$.
Suppose that $(A_{i+1}, a)$ is obtained from $(A_i, a)$ by applying a  rewriting rule $\overline{u}\,\overline{v} \to_{N_4} \overline{u}^+\overline{uv}$, that is, by replacing some factor $(\Gamma_{\overline{u}},u)(\Gamma_{\overline{v}},v)$ in a decomposition of $(A_i, a)$ into generators with $(\Gamma_{\overline{u}},\lambda)(\Gamma_{\overline{uv}},uv)$.
Applying ($T_3$) to the corresponding subgraph of $A_i$ and ($T_4$) to the corresponding subgraph of
$A_{i+1}$ results in the same subgraph, so that $A_i^{\wedge_s} = A_{i+1}^{\wedge_s}$. 
The case where $(A_{i},a)$ is obtained from $(A_{i+1},a)$ by applying a  rewriting rule $\overline{u}\,\overline{v} \to_{N_4} \overline{u}^+\overline{uv}$ is treated similarly.
Therefore, $A_i^{\wedge_s} = A_{i+1}^{\wedge_s}$, as desired.

Suppose now that $a=b$ and $A^{\wedge_s} = B^{\wedge_s}$. To show that $(A, a) \mathrel{\rho_s} (B,a)$, it suffices to show that $(A,a) \mathrel{\rho_s} (A^{\wedge_s},a)$. Since $A^{\wedge_s}$ is obtained from $A$ by finitely many applications of transformations of types ($T_1$), ($T_2$), ($T_3$) or ($T_4$).
Suppose $(A', a)$ for some graph $A'$ is obtained from $(A,a)$ by applying one of such transformations. We show that $(A, a) \mathrel{\rho_s} (A', a)$.
For transformations of types ($T_1$), ($T_2$) and ($T_3$), Lemma \ref{lem:n26e} implies that $(A,a) \mathrel{\rho} (A',a)$. 
Since $\rho \subseteq \rho_s$, we have $(A,a) \mathrel{\rho_s} (A', a)$. We finally suppose that $(A',a)$ is obtained from $(A,a)$ by applying a transformation of type ($T_4$). Then $(A, a)$ can be written via generators (see Proposition \ref {prop:M_generators}), so that the decomposition contains the subterm
$(\alpha\overline{u})^+(\alpha\overline{uv})^+$ = $(\alpha\overline{u})^+(\alpha\overline{uv})^+$ (where $\overline{u}$ stands for its value $(\Gamma_{\overline{u}},u)$ and $\alpha$ is some element of $M(X^*,X\cup\overline{X^*})$). Since this equals $((\alpha\overline{u})^+\alpha\overline{uv})^+ = ((\alpha\overline{u}^+)^+\alpha\overline{uv})^+ = (\alpha\overline{u}^+\overline{uv})^+$, we can assume that $(A, a)$ is written via generators, so that the decomposition contains the subterm $(\Gamma_{\overline{u}},\overline{u})^+(\Gamma_{\overline{uv}}, uv).$
Applying ($T_4$) to the corresponding subgraph of $A$, we get the graph $A'$ such that this subterm of $(A,a)$ gets replaced by the subterm $((\Gamma_{\overline{u}}, u)(\Gamma_{\overline{v}}, v))^+(\Gamma_{\overline{uv}}, uv)$ in $(A',a)$. Bearing in mind that
$$
\overline{u}\,\overline{v} \to_{N_4}\overline{u}^+\overline{uv}, \quad \overline{u}\,\overline{v} \to_{N_3}(\overline{u}\,\overline{v})^+\overline{uv},
$$
and that both $\to_{N_3}$ and $\to_{N_4}$ are contained in $\rho_s$, it follows that $(A, a)\mathrel{\rho_s} (A', a)$.
\end{proof}

Similarly as in Remark \ref{rem:closure}, the map $\Gamma \mapsto \Gamma^{\wedge_s}$ is a closure operator on the semilattice  ${\mathcal X}_{X\cup \overline{X^*}}$ 
of all finite accessible subgraphs of $\Cay(X^*,X\cup \overline{X^*})$ with at least one edge.

We say that a finite accessible subgraph $\Gamma$ of $\Cay(X^*, X\cup \overline{X^*})$ is {\em $\rho_s$-closed}, provided that  $\Gamma = \Gamma^{\wedge_s}$. 
By Proposition~\ref{prop:rhos}, each $\rho_{s}$-class in 
$M(X^*, X \cup \overline{X^*})$ contains a unique representative 
$(\Gamma, a)$, where $\Gamma$ is $\rho_s$-closed. This leads to the following proposition, which provides a model for ${\mathsf{FFR}}_s(X)\simeq M(X^*, X \cup \overline{X^*})/\rho_s$.

\begin{proposition} \label{prop:models}
Elements of ${\mathsf{FFR}}_s(X)$ can be identified with  pairs $(\Gamma, a)$, where $\Gamma$ is a $\rho_s$-closed subgraph of $\Cay(X^*, X\cup \overline{X^*})$, equipped with the operations
$$
(\Gamma, a)(\Delta,  b) = \bigl((\Gamma \cup s\Delta)^{\wedge_s},\, ab\bigr),
\qquad
(\Gamma,  a)^{+} = (\Gamma, \lambda), \qquad \mx{(\Gamma, a)}=(\Gamma_{ \overline{a}}^{\wedge_s}, a).
$$
\end{proposition}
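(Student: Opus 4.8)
The plan is to transport the algebraic structure of $M(X^*, X\cup\overline{X^*})/\rho_s$ along the bijection supplied by Proposition~\ref{prop:rhos}. That proposition characterizes $\rho_s$ graph-theoretically: $(A,s)\mathrel{\rho_s}(B,t)$ if and only if $s=t$ and $A^{\wedge_s}=B^{\wedge_s}$. Hence each $\rho_s$-class has a unique representative of the form $(\Gamma,s)$ with $\Gamma$ being $\rho_s$-closed, and the assignment $[(A,s)]_{\rho_s}\mapsto(A^{\wedge_s},s)$ is a bijection from $M(X^*, X\cup\overline{X^*})/\rho_s$ onto the set of pairs $(\Gamma,s)$ where $\Gamma$ is a $\rho_s$-closed finite accessible subgraph of $\Cay(X^*,X\cup\overline{X^*})$ and $s\in V(\Gamma)$. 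First I would record this bijection; then it remains only to check that the three operations of the quotient correspond, under this identification, to the formulas claimed in the statement, after which the identification is automatically a $(\cdot\,,^+,\mx{},\lambda)$-isomorphism.

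For the product I would start from $\rho_s$-closed representatives $(\Gamma,s)$ and $(\Delta,t)$, multiply them in $M(X^*, X\cup\overline{X^*})$ using \eqref{eq:operations} to obtain $(\Gamma\cup s\Delta, st)$, and invoke Proposition~\ref{prop:rhos} to conclude that its unique $\rho_s$-closed representative is $((\Gamma\cup s\Delta)^{\wedge_s}, st)$, which is exactly the stated formula. For $^+$, the computation $(\Gamma,s)^+=(\Gamma,\lambda)$ in $M(X^*, X\cup\overline{X^*})$ alters only the distinguished vertex; since $\rho_s$-closedness is a property of the underlying graph alone and $\lambda\in V(\Gamma)$ by accessibility, the pair $(\Gamma,\lambda)$ is already $\rho_s$-closed, so no closure is required. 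For $\mx{}$, I would use the identity $\mx{u}=\overline{\sigma^{\natural}(u)}$ valid in our model (Remark~\ref{rem:generated}) together with the fact that the maximum reduced quotient of the model is $X^*$, so that $\sigma^{\natural}(\Gamma,s)=s$; this gives $\mx{(\Gamma,s)}=\overline{s}$, whose value in $M(X^*, X\cup\overline{X^*})$ is $(\Gamma_{\overline{s}},s)$ and whose $\rho_s$-closed representative is $(\Gamma_{\overline{s}}^{\wedge_s},s)$.

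The few well-definedness points I would verify along the way are that $\Gamma\cup s\Delta$ is again a finite accessible subgraph (this is just the closure of $M(X^*, X\cup\overline{X^*})$ under multiplication, already available) and that the closure $\Gamma\mapsto\Gamma^{\wedge_s}$ adds only edges, never vertices, so that $st$ remains a vertex of $(\Gamma\cup s\Delta)^{\wedge_s}$ and the distinguished vertex is preserved throughout. The fact that $\Gamma\mapsto\Gamma^{\wedge_s}$ is a closure operator, noted after Proposition~\ref{prop:rhos}, guarantees canonicity of the representatives and independence of the transported operations from the chosen representatives. I do not expect a genuine obstacle here: all the substantive work---identifying the generators $N_s$ of $\rho_s$, proving $\rho_s=\rho_{N_s}$ (Proposition~\ref{prop:mod2}), and detecting $\rho_s$ through the $\wedge_s$-closure (Proposition~\ref{prop:rhos})---has already been done, and this statement merely repackages those results into an explicit graph model, in exact parallel with the passage from Proposition~\ref{prop:rho} to Proposition~\ref{prop:model} in the non-strong case.
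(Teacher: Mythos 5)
Your proposal is correct and follows exactly the route the paper takes: Proposition~\ref{prop:rhos} gives the unique $\rho_s$-closed representative in each class, and the three operation formulas are obtained by computing in $M(X^*, X\cup\overline{X^*})$ and passing to that representative (the paper states the proposition without further proof for precisely this reason). Your additional well-definedness checks — that the transformations ($T_1$)–($T_4$) add only edges and never vertices, and that $(\Gamma,\lambda)$ is already $\rho_s$-closed — are accurate and harmless elaborations of what the paper leaves implicit.
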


In the next lemma we observe an important property of $\rho_s$-closed subgraphs of $\Cay(X^*, X\cup \overline{X^*})$.

\begin{lemma}\label{lem:d1a}
Let $\Gamma$ be a finite $\rho_s$-closed accessible subgraph of $\Cay(X^*,X\cup \overline{X^*})$ with at least one edge and let $u$, $uv$ be distinct vertices of $\Gamma$. Then $\Gamma$ contains the edge $(u, \overline{v}, uv)$.
\end{lemma}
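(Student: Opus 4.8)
The plan is to reduce the statement to the special case $u = \lambda$, namely to first show that a $\rho_s$-closed $\Gamma$ contains the \emph{root edge} $(\lambda, \overline{w}, w)$ for \emph{every} vertex $w \neq \lambda$, and then to recover the general edge $(u, \overline{v}, uv)$ by a single application of closure under $(T_4)$. Indeed, in the non-degenerate case $u \neq \lambda$ we have $u, uv \in X^+$ and $v \in X^+$ (the latter since $u \neq uv$), so once both $(\lambda, \overline{u}, u)$ and $(\lambda, \overline{uv}, uv)$ are known to lie in $\Gamma$, the equality $\Gamma = \Gamma^{\wedge_s}$ forces $(u, \overline{v}, uv) \in \Gamma$: otherwise $(T_4)$, applied with its root $w$ set to $\lambda$ and the roles of its parameters played by our $u, v$, would still be applicable to $\Gamma$. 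The degenerate case $u = \lambda$ gives $(u,\overline v, uv)=(\lambda,\overline v, v)$, which is exactly the special case applied to the vertex $v$.

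To establish the special case I would first record that, since $\Gamma$ is closed under $(T_2)$, every solid edge $(w', x, w'x)$ of $\Gamma$ is accompanied by its dashed parallel $(w', \overline{x}, w'x)$ with $x \in X \subseteq X^+$. Combining this with accessibility, I would then show that every vertex $w \neq \lambda$ is the endpoint of a \emph{dashed} directed path from $\lambda$, i.e.\ a path all of whose edges carry labels in $\overline{X^+}$: take any directed path from $\lambda$ to $w$ in $\Gamma$, replace each solid edge by its dashed parallel, and delete the $\overline{\lambda}$-loops, which are precisely the dashed edges with label $\overline{\lambda}$ and which may be removed from a path without affecting its endpoints.

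Writing the label of such a dashed path as $\overline{t_1}\,\overline{t_2}\cdots \overline{t_k}$ with all $t_i \in X^+$, so that it visits the vertices $\lambda, t_1, t_1 t_2, \dots, t_1\cdots t_k = w$, I would then collapse it to a single edge by iterating closure under $(T_3)$: closure forces $(\lambda, \overline{t_1 t_2}, t_1 t_2) \in \Gamma$ from the first two edges; then $(\lambda, \overline{t_1 t_2 t_3}, t_1 t_2 t_3) \in \Gamma$ from this new edge together with the third; and so on, each intermediate label lying in $X^+$ so that $(T_3)$ is legitimately applicable, until we reach $(\lambda, \overline{w}, w) \in \Gamma$. (If $k=1$ the root edge is already present and no collapse is needed.) This yields the special case and, via the reduction above, completes the argument.

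The main obstacle is precisely this passage from an arbitrary access path to a single root edge $(\lambda, \overline{w}, w)$: it relies on converting solid edges into their dashed parallels through $(T_2)$ so that $(T_3)$ becomes applicable, and on the careful inductive collapse of the resulting dashed path through $(T_3)$, tracking that every intermediate label stays in $X^+$. Once the root edges are in hand, the concluding $(T_4)$ step is immediate.
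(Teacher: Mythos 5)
Your proposal is correct and follows essentially the same route as the paper's proof: accessibility plus closure under $(T_2)$ and $(T_3)$ yields the edges $(\lambda,\overline{u},u)$ and $(\lambda,\overline{uv},uv)$, and closure under $(T_4)$ then forces $(u,\overline{v},uv)$. You merely spell out in detail (converting solid edges to dashed parallels, discarding $\overline{\lambda}$-loops, and inductively collapsing the dashed path via $(T_3)$) what the paper compresses into a single sentence.
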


\begin{proof} Since $\Gamma$ is accessible, there exists a directed path from $\lambda$ to $u$ and from $\lambda$ to $uv$. Since ($T_2$) and ($T_3$) can not be applied to $\Gamma$, it contains the edges $(\lambda,\overline{u},u)$ and $(\lambda,\overline{uv},uv)$. Further, since ($T_4$) can not be applied to $\Gamma$, it contains the edge $(u, \overline{v}, uv)$.
\end{proof}

\begin{theorem}\label{thm:words}
The word problem for $\mathsf{FFR}_s(X)$ is decidable with an algorithm of the time complexity $O(n^2)$ where $n = {\mathrm{max}}\{{\mathsf{c}}(u), {\mathsf{c}}(v)\}$ and $u,v\in {\mathbb T}(X)$ are the input terms.
\end{theorem}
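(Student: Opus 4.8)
The plan is to follow the algorithm devised in the proof of Theorem~\ref{thm:word}, but to exploit the rigidity of $\rho_s$-closed graphs recorded in Lemma~\ref{lem:d1a} in order to avoid constructing the (quadratically large) closure explicitly. First I would reduce, exactly as in Theorem~\ref{thm:word}, each input term to data in the Gould expansion: replacing subterms $\mx{v}$ with $v \notin X^*$ by $\mx{[v]_{X^*}}$, then rewriting over the alphabet $X \cup \overline{X^*}$ to obtain a $(\cdot\,,^+)$-term $\tilde{u}$, and finally computing its value $(\Gamma_{\tilde{u}}, [\tilde{u}]_{X^*})$ in $M(X^*, X\cup \overline{X^*})$. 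By Propositions~\ref{prop:rhos} and~\ref{prop:models} the value of $u$ in $\mathsf{FFR}_s(X) \simeq M(X^*, X\cup\overline{X^*})/\rho_s$ is then $(\Gamma_{\tilde{u}}^{\wedge_s}, [\tilde{u}]_{X^*})$, and as in Theorem~\ref{thm:word} the graph $\Gamma_{\tilde{u}}$ has at most $\mathsf{c}(u)$ vertices and $O(n)$ edges and is produced in $O(n)$ steps. Thus deciding whether $u$ and $v$ have the same value in $\mathsf{FFR}_s(X)$ amounts to testing $[\tilde{u}]_{X^*} = [\tilde{v}]_{X^*}$ together with $\Gamma_{\tilde{u}}^{\wedge_s} = \Gamma_{\tilde{v}}^{\wedge_s}$.

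The crucial step, and the place where the complexity drops from $O(n^3)$ to $O(n^2)$, is a reduction that removes any need to build the closures. I would first observe that each of the transformations ($T_1$)--($T_4$) only adds edges labelled by elements of $\overline{X^*}$, so passing from a graph to its $\rho_s$-closure changes neither the vertex set nor the set of edges labelled by $X$ (the \emph{solid} edges). Next, Lemma~\ref{lem:d1a} shows that in a $\rho_s$-closed graph every pair of distinct vertices $u$, $uv$ is joined by the edge $(u,\overline{v}, uv)$; since every $\overline{X^*}$-labelled edge of a subgraph of $\Cay(X^*, X\cup\overline{X^*})$ necessarily has the form $(u,\overline{w}, uw)$ with $u, uw$ vertices, and since the $\overline{\lambda}$-loops sit exactly at the non-sink vertices, the entire set of dashed edges of a $\rho_s$-closed graph is determined by its vertex set alone. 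Combining these two observations yields the key equivalence
$$
\Gamma_{\tilde{u}}^{\wedge_s} = \Gamma_{\tilde{v}}^{\wedge_s} \quad\Longleftrightarrow\quad V(\Gamma_{\tilde{u}}) = V(\Gamma_{\tilde{v})} \text{ and } \Gamma_{\tilde{u}}, \Gamma_{\tilde{v}} \text{ have the same solid edges},
$$
so that the comparison can be carried out directly on the unclosed graphs $\Gamma_{\tilde{u}}$ and $\Gamma_{\tilde{v}}$, each of which carries only $O(n)$ vertices and $O(n)$ solid edges.

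It then remains to bound the cost of this comparison. Testing $V(\Gamma_{\tilde{u}}) = V(\Gamma_{\tilde{v}})$ reduces to checking the two inclusions; each vertex of one graph is searched for among the $O(n)$ vertices of the other, giving $O(n^2)$ steps, and the same estimate applies to the $O(n)$ solid edges. Together with the $O(n)$ cost of comparing $[\tilde{u}]_{X^*}$ with $[\tilde{v}]_{X^*}$ and the $O(n)$ preprocessing that produces $\tilde{u}$ and $\Gamma_{\tilde{u}}$, this gives the announced bound $O(n^2)$. I expect the main obstacle to be the justification of the displayed equivalence, i.e.\ checking carefully that the strong closure adds no solid edges and that Lemma~\ref{lem:d1a} (together with the non-sink description of the $\overline{\lambda}$-loops) really pins down \emph{all} dashed edges from the vertex set; this rigidity is precisely what fails in the non-strong case of Theorem~\ref{thm:word}, where the dashed edges depend on more than the vertices and one is forced to compare the $O(n^2)$ edges of the closures.
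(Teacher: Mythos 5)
Your overall strategy is the paper's: reduce to the Gould expansion, observe that the closure $\wedge_s$ adds only dashed edges (hence preserves the vertex set and the $X$-labelled edges), and use Lemma~\ref{lem:d1a} to avoid building the quadratically large closure. The preprocessing and the complexity accounting are fine. However, your key displayed equivalence is false, and the error is exactly where you suspected it: the claim that ``the entire set of dashed edges of a $\rho_s$-closed graph is determined by its vertex set alone.'' Lemma~\ref{lem:d1a} does determine all edges labelled by elements of $\overline{X^+}$ from the vertex set, but it says nothing about the loops labelled by $\overline{\lambda}$, and your appeal to ``the $\overline{\lambda}$-loops sit exactly at the non-sink vertices'' is circular: a vertex whose only outgoing edge is a $\overline{\lambda}$-loop is by definition a non-sink, so being a non-sink is not a property of the vertex set. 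In a $\rho_s$-closed graph, every vertex that is a proper prefix of another vertex does carry a $\overline{\lambda}$-loop (by ($T_1$) combined with Lemma~\ref{lem:d1a}), but a maximal vertex carries such a loop if and only if the original graph already did --- none of ($T_1$)--($T_4$) can create an outgoing edge at a maximal vertex. Concretely, the terms $\mx{x}$ and $\mx{x}\lambda$ produce graphs with the same vertex set $\{\lambda,x\}$ and no solid edges, yet their closures differ by the loop $(x,\overline{\lambda},x)$; these elements are genuinely distinct in $\mathsf{FFR}_s(X)$ (this is precisely why $\mathsf{FFR}_s(X)$ fails to be a monoid), and your criterion would wrongly identify them.

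The repair is what the paper does: retain the $\overline{\lambda}$-labelled edges in the comparison, i.e.\ delete from each graph only the edges labelled by elements of $\overline{X^+}$ and compare the resulting (possibly disconnected) graphs together with their vertex sets. To do this correctly one should compare the $\overline{\lambda}$-loops of the \emph{closures}, which are still computable in $O(n^2)$ time: the closure has a $\overline{\lambda}$-loop at $w$ exactly when $w$ has an outgoing edge in the original graph or is a proper prefix of another vertex. This extra bookkeeping costs nothing asymptotically, so the $O(n^2)$ bound survives; but as written your algorithm decides the wrong equivalence relation.
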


\begin{proof}
Let $u\in{\mathbb T}(X)$ be a term over $X$ in the signature $(\cdot\,,^+,\mx{},\lambda)$. We consider the term $\tilde{u}$  in the signature $(\cdot\,,^+)$ over the alphabet $X\cup \overline{X^*}$ 
and its value $(\Gamma_{\tilde{u}},[\tilde{u}]_{X^*})$  in $M(X^*,X\cup \overline{X^*})$, as is described in the proof of Theorem~\ref{thm:word}. 
By Lemma \ref{lem:d1a}, 
if $\Gamma$ is a $\rho_s$-closed graph then any edge in $\Cay(X,X\cup \overline{X^*})$ labeled by an element of $\overline{X^+}$ is in $\Gamma$ whenever its incident vertices belong to $\Gamma$.
It follows that two $\rho_s$-closed subgraphs $\Gamma$ and $\Delta$ of $\Cay(X^*,X\cup \overline{X^*})$ coincide if and only if they have the same edges labeled by $X \cup \{\overline{\lambda}\}$.
If $\Gamma$ is a finite accessible subgraph of $\Cay(X^*,X\cup \overline{X^*})$ with at least one edge,  by $\Gamma'$ we denote the graph obtained from $\Gamma$ by removing all its edges labeled by elements of $\overline{X^+}$. We obtain a not necessarily connected graph (which may also have isolated vertices).
Then the terms $u$ and $v$ of ${\mathbb T}(X)$ represent the same element of $\mathsf{FFR}_s(X)$ if and only if 
$(\Gamma_{\tilde{u}})' = (\Gamma_{\tilde{v}})'$ and $[\tilde{u}]_{X^*} = [\tilde{v}]_{X^*}$.
As is argued in the proof of Theorem~\ref{thm:word}, we construct $\tilde{u}$ and $\tilde{v}$ in $O(n)$ steps. Since each of $\Gamma_{\tilde{u}}$ and $\Gamma_{\tilde{v}}$ has at most $O(n)$ edges, to obtain $(\Gamma_{\tilde{u}})'$ from $\Gamma_{\tilde{u}}$ and $(\Gamma_{\tilde{v}})'$ from $\Gamma_{\tilde{v}}$, one requires $O(n)$ steps. Given that the graphs $(\Gamma_{\tilde{u}})'$ and $(\Gamma_{\tilde{v}})'$ have each $O(n)$ edges and $O(n)$ vertices, we can check if they are equal in $O(n^2)$ steps.
\end{proof}

\begin{remark} 
To obtain the model of the free $X$-generated strong $F$-restriction monoid, we must also take into account the relation which says that the generator $\overline{\lambda}$ is the identity element. Similarly as in Remark \ref{rem:d1b}, we can exclude $\overline{\lambda}$ from the generators and obtain the model, such that the involved subgraphs $\Gamma$ do not have loops labeled by $\overline{\lambda}$ at all of their vertices. Going further and removing also all the edges labeled by elements of ${\overline{X^+}}$, we obtain the model for the free $X$-generated strong $F$-restriction monoid involving finite but not necessarily connected subgraphs of $\Cay(X^*,X)$. Since $\Cay(X^*,X)$ is a subgraph of $\Cay ({\mathsf{FG}}(X), X)$, where 
${\mathsf{FG}}(X)$ is the free $X$-generated group, it follows that the free $X$-generated strong $F$-restriction monoid $X$-canonically $(\cdot\,,^+,\mx{},\lambda)$-embeds into the free $X$-generated $F$-inverse monoid, described in \cite{AKSz21}.
\end{remark}

\section{The free \texorpdfstring{$X$}{X}-generated perfect \texorpdfstring{$F$}{F}-restriction monoid}\label{sec:perfect}
In this section we provide a  model of the free $X$-generated perfect $F$-restriction monoid\footnote{Recall that by Proposition \ref{prop:perfmonoid} a perfect 
$F$-restriction semigroups is necessarily a monoid.} $\mathsf{FFR}_p(X)$ and solve the word problem for it.

We start from the following useful observation.

\begin{lemma} \label{lem:p1} Let $R$ be an $X$-generated perfect $F$-restriction monoid. Then $R$ is $(X\cup \overline{X})$-generated as a restriction monoid.
\end{lemma}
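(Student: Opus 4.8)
The plan is to derive the generation from the general statement of Remark~\ref{rem:generated} and then use the perfect identity \eqref{eq:perfect} to shrink the generating set $\overline{S}$ down to $\overline{X}=\{\overline{x}\colon x\in X\}$, where $\overline{x}=\mx{x}$. Write $S=R/\sigma$. First I would observe that $S$ is an $X$-generated monoid (compare Lemma~\ref{lem:free}): being reduced, it is a monoid on which $^+$ and $\mx{}$ act trivially, so its $X$-generation in the signature $(\cdot\,,^+,\mx{},\lambda)$ amounts to $X$-generation as a monoid. By Remark~\ref{rem:generated}, $R$ is then $(X\cup\overline{S})$-generated as a restriction semigroup, where $\overline{S}=\{\overline{s}\colon s\in S\}$ and $\overline{s}=\mx{a}$ for any $a\in\sigma^{-1}(s)$; a fortiori $R$ is $(X\cup\overline{S})$-generated as a restriction monoid.

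The key step is to show that the map $\phi\colon S\to R$, $s\mapsto\overline{s}$, is a monoid homomorphism. It is well defined because each $\sigma$-class has a unique maximum, so $\mx{a}$ depends only on $\sigma^{\natural}(a)$. Given $s,t\in S$, choose $a\in\sigma^{-1}(s)$ and $b\in\sigma^{-1}(t)$; then $ab\in\sigma^{-1}(st)$ since $\sigma$ is a congruence, and the first identity of \eqref{eq:perfect} gives
\[
\overline{s}\,\overline{t}=\mx{a}\,\mx{b}=\mx{(ab)}=\overline{\sigma^{\natural}(ab)}=\overline{st}.
\]
Moreover the $\sigma$-class of the identity $\lambda$ of $R$ is $P(R)$, whose maximum is $\lambda$ itself, so $\phi$ sends the identity of $S$ to the identity of $R$ and is therefore a monoid homomorphism.

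Finally I would invoke the $X$-generation of $S$ as a monoid: every $s\in S$ can be written as $s=[x_1]_S\cdots[x_n]_S$ with $x_i\in X$ and $n\geq 0$. Applying $\phi$ yields $\overline{s}=\overline{x_1}\cdots\overline{x_n}$, the empty product being read as $\lambda$. Hence every element of $\overline{S}$ lies in the submonoid of $R$ generated by $\overline{X}$. Combining this with the first paragraph, the restriction-monoid subalgebra of $R$ generated by $X\cup\overline{X}$ contains $X\cup\overline{S}$ and therefore equals $R$; that is, $R$ is $(X\cup\overline{X})$-generated as a restriction monoid, as required.

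I do not expect a serious obstacle here; the only delicate point is the well-definedness and multiplicativity of $\phi$, and this is exactly where perfectness enters through \eqref{eq:perfect}. This also clarifies the contrast with the general $F$-restriction case of Section~\ref{sec:F-restr}, where $\overline{uv}$ need not equal $\overline{u}\,\overline{v}$ and the full set $\overline{X^*}$ is genuinely required.
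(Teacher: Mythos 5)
Your proposal is correct and follows essentially the same route as the paper: invoke Remark~\ref{rem:generated} to get $(X\cup\overline{S})$-generation and then use the perfect identity $\mx{(xy)}=\mx{x}\,\mx{y}$ to express each $\overline{s}$ with $s=[x_1]_S\cdots[x_k]_S$ as $\overline{x_1}\cdots\overline{x_k}$ (with $\overline{\lambda}=\lambda$ handled by the constant). Your extra step of packaging this as the multiplicativity of the map $s\mapsto\overline{s}$ is a harmless elaboration of the same argument.
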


\begin{proof}
As is pointed out in Remark \ref{rem:generated}, $R$ is $(X \cup \overline{S})$-generated as a restriction monoid, where $S=R/\sigma$. The maximum elements of the $\sigma$-classes of $R$ are the elements $\overline{u}$, where $u$ runs through $S$. Since $S$ is an $X$-generated monoid, we have that $u=\lambda$ or $u=x_1\cdots x_k$, where $k\geq 1$ and $x_i\in X$ for all $i\in \{1,\dots, k\}$. In the latter case we have  $\overline{u} = \overline{x_1}\cdots \overline{x_k}$ due to the identity $\mx{(xy)} = \mx{x}\,\mx{y}$. The statement follows.
\end{proof}

We are therefore seeking a model of $\mathsf{FFR}_p(X)$, which is a quotient of $M^{\lambda}(X^*,X\cup \overline{X})$.
Similarly as in Lemma \ref{lem:free}, we conclude that $\mathsf{FFR}_p(X)/\sigma \simeq X^*$. Since  $\mathsf{FFR}_p(X)$ is proper, the universal property of $M^{\lambda}(X^*, X \cup \overline{X})$ implies that $\mathsf{FFR}_p(X)$ is an $(X \cup \overline{X})$-canonical $(\cdot\,,^+)$-quotient of $M^{\lambda}(X^*,\, X \cup \overline{X})$.

Let $\rho_p$ be the $(\cdot\,,^+)$-congruence on $M^{\lambda}(X^*, X\cup \overline{X})$ such that $M^{\lambda}(X^*, X\cup \overline{X})/\rho_p$ is $(X\cup \overline{X})$-canonically isomorphic to $\mathsf{FFR}_p(X)$. That is, $\rho_p$ is the coarsest congruence on $M^{\lambda}(X^*, X\cup \overline{X})$ such that the quotient by it is an $X$-generated perfect $F$-restriction monoid and $\overline{x}$ is the maximum element of its $\sigma$-class for every $x\in X$. Similarly as in the previous sections, we have $\rho_p \subseteq \sigma$.
Since $M^{\lambda}(X^*, X \cup \overline{X})/\rho_p$ is a perfect $F$-restriction monoid, it must satisfy the following identities:
\begin{enumerate}
\item $\overline{x} \geq x$;
\item $\overline{x}^+ = \lambda$.
\end{enumerate}
We consider the relation
$$ 
\to_{N_p}=\{(x,x^+\overline{x}) \colon x\in X\}\cup \{(\lambda,\overline{x}^+) \colon x\in X\}
$$
on $M^{\lambda}(X^*, X \cup \overline{X})$ and denote by $\rho_{N_p}$ the congruence on $M^{\lambda}(X^*, X \cup \overline{X})$  generated by this relation. By definition, the relation $\to_{N_p}$ is contained in $\rho_p$, therefore  $\rho_{N_p}\subseteq \rho_p$.

\begin{proposition}
$M^{\lambda}(X^*, X\cup \overline{X})/\rho_{N_p}$ is an $X$-generated perfect $F$-restriction monoid. Consequently, $\rho_{N_p}=\rho_p$ and $M^{\lambda}(X^*, X\cup \overline{X})/\rho_{N_p}$ is $X$-canonically $(\cdot\,, ^+, \mx{}, \lambda)$-isomorphic to ${\mathsf{FFR}}_p(X)$.
\end{proposition}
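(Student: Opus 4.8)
The plan is to follow the template of Propositions~\ref{prop:mod1} and~\ref{prop:mod2}: first verify directly that the quotient $Q := M^{\lambda}(X^*, X\cup \overline{X})/\rho_{N_p}$ is an $X$-generated perfect $F$-restriction monoid, and then exploit the coarsest-congruence characterisation of $\rho_p$ to deduce $\rho_p\subseteq\rho_{N_p}$, which together with the already-noted inclusion $\rho_{N_p}\subseteq\rho_p$ yields equality. A preliminary observation I would record is that $N_p\subseteq\sigma$ (both defining relations pair $\sigma$-equivalent elements), so $\rho_{N_p}\subseteq\sigma$ and hence $Q/\sigma\simeq X^*$; in particular the $\sigma$-class of an element $s$ is determined by $\sigma^{\natural}(s)\in X^*$. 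Since $Q$ is a quotient of the monoid $M^{\lambda}(X^*, X\cup\overline{X})$, it is itself a monoid, with identity the image $\lambda$.

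For $w=x_1\cdots x_k\in X^+$ I set $\overline{w}:=\overline{x_1}\cdots\overline{x_k}\in Q$ and put $\overline{\lambda}:=\lambda$. To show that every $\sigma$-class of $Q$ has a maximum, I argue as in Case~1 of Proposition~\ref{prop:mod1}. Given $u\in Q$ that is not a projection, Lemma~\ref{lem:generation}(1) writes $u=ev$ with $v$ a product $g_1\cdots g_n$ of generators from $X\cup\overline{X}$. The first relation of $N_p$ gives $y\leq\overline{y}$ for each $y\in X$, so each factor satisfies $g_i\leq\overline{y_i}$, where $y_i\in X$ is the underlying letter of $g_i$; compatibility of $\leq$ with multiplication then gives $u=ev\leq v\leq\overline{y_1}\cdots\overline{y_n}=\overline{\sigma^{\natural}(u)}$. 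As $\overline{\sigma^{\natural}(u)}$ maps under $\sigma^{\natural}$ back to $\sigma^{\natural}(u)$, it lies in $[u]_\sigma$ and is an upper bound of it, hence its maximum. For a projection $u$ I would note that $\lambda=\overline{\lambda}$ is the identity of $Q$, hence its maximum projection, so $u\leq\lambda$. This establishes that $Q$ is an $F$-restriction monoid with $\mx{u}=\overline{\sigma^{\natural}(u)}$.

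It then remains to verify the two identities of \eqref{eq:perfect}. The equality $\mx{s}\mx{t}=\mx{(st)}$ is immediate from $\mx{s}=\overline{\sigma^{\natural}(s)}$, since $\overline{w_1}\,\overline{w_2}=\overline{w_1w_2}$ holds by construction as a concatenation of products. For $(\mx{s})^+=\lambda$ I would prove by induction on $k$ that $(\overline{x_1}\cdots\overline{x_k})^+=\lambda$: the base case is the second relation of $N_p$, and the inductive step uses $(ab)^+=(ab^+)^+$ (identity~\eqref{eq:aaa1}) together with $\overline{x_k}^+=\lambda$ and the fact that $\lambda$ is the identity of $Q$, reducing $(\overline{x_1}\cdots\overline{x_k})^+$ to $(\overline{x_1}\cdots\overline{x_{k-1}})^+$. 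Finally, $Q$ is $X$-generated in the enriched signature because $\overline{x}=\mx{x}$, so $Q$ is an $X$-generated perfect $F$-restriction monoid in which each $\overline{x}$ is the maximum of its $\sigma$-class.

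With this established, the coarsest-congruence description of $\rho_p$ forces $\rho_p\subseteq\rho_{N_p}$, whence $\rho_{N_p}=\rho_p$; the asserted $X$-canonical $(\cdot\,,^+,\mx{},\lambda)$-isomorphism with ${\mathsf{FFR}}_p(X)$ is then exactly the definition of $\rho_p$. I expect the only genuinely delicate point to be the verification of $(\mx{s})^+=\lambda$, where it is essential that we work in the \emph{monoid} $M^{\lambda}(X^*, X\cup\overline{X})$ rather than in $M(X^*,\,\cdot\,)$: the induction collapses precisely because $\lambda$ can be absorbed as an identity, and it is this feature, combined with the reduced generating set $X\cup\overline{X}$ justified by Lemma~\ref{lem:p1}, that distinguishes the perfect case from the previous two.
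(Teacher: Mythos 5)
Your proposal is correct and follows essentially the same route as the paper: use Lemma \ref{lem:generation}(1) and $x\le\overline{x}$ to identify $\overline{\sigma^{\natural}(u)}$ as the maximum of each $\sigma$-class, check the identities of \eqref{eq:perfect}, observe $X$-generation via $\overline{x}=\mx{x}$, and conclude $\rho_p=\rho_{N_p}$ from the two inclusions. The only difference is that you spell out the induction establishing $(\mx{s})^+=\lambda$ from the relations $(\lambda,\overline{x}^+)$, a step the paper leaves implicit; this is a welcome (and correct) addition rather than a change of approach.
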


\begin{proof}
Let $x_1\cdots x_n \in X^*$ be a non-empty word. We show that $\overline{x_1}\, \overline{x_2}\cdots \overline{x_n}$ is the maximum element of the $\sigma$-class of $M^{\lambda}(X^*, X\cup \overline{X})/\rho_{N_p}$ which projects onto this word. 
Let $u$ be an arbitrary element in this $\sigma$-class. By Lemma \ref{lem:generation}(1) we can write
$u=ev$, where $e$ is a projection and $v$ is a non-empty  term written in generators $X\cup \overline{X}$ using only the product operation.  Since $x\leq \overline{x}$ for all $x\in X$ and the natural partial order is compatible with the multiplication, it follows that
$u \leq v \leq \overline{x_1}\, \overline{x_2}\cdots \overline{x_n}$. 
Therefore, $\overline{x_1} \,\overline{x_2}\cdots \overline{x_n}$ is the maximum element of the $\sigma$-class which project onto $x_1\cdots x_n$. Moreover, as $\lambda$ is the maximum projection in $M^{\lambda}(X^*, X\cup \overline{X})$, it is also the maximum projection in  $M^{\lambda}(X^*, X\cup \overline{X})/\rho_{N_p}$.

Let further $u, v \in M^{\lambda}(X^*, X\cup \overline{X})/\rho_{N_p}$, where $\sigma^\natural (u)= x_1\cdots x_n$ and $\sigma^\natural (v)= y_1\cdots y_m$. The maximum element of the $\sigma$-class of $uv$ is then $\overline{x_{1}}\cdots \overline{x_{n}}\,\overline{y_{1}}\cdots \overline{y_{m}}$,
which is precisely the product of the maximum elements $\overline{x_{1}}\cdots \overline{x_{n}}$ and $\overline{y_{1}}\cdots \overline{y_{m}}$ of the $\sigma$-classes of $u$ and $v$.  
It follows that $M^{\lambda}(X^*, X \cup \overline{X})/\rho_{N_p}$ is perfect.  
Moreover, since $\overline{x} = \mx{x}$ for all $x\in X$, it is in fact $X$-generated as an $F$-restriction monoid.
Therefore, $\rho_{p} \subseteq \rho_{N_p}$.  
Given our previous observation that $\rho_{N_p}\subseteq \rho_{p}$, we conclude that $\rho_{p} = \rho_{N_p}$. This completes the proof.
\end{proof}

We now aim to provide a model of $M^{\lambda}(X^*, X \cup \overline{X})/\rho_p$ based on the Cayley graph $\Cay(X^*, X)$. 
We start from observing the following sufficient condition for two elements of
$M^{\lambda}(X^*, X \cup \overline{X})$ to be $\rho_p$-related.

\begin{lemma}\label{lem:n28b}
Let $(A,s) \in M^{\lambda}(X^*, X \cup \overline{X})$. Let further $v$ be a vertex of $A$ and  $B$ be a graph obtained from $A$ by adding to it the edge $(v,\overline{x},v x)$ (and the vertex $vx$, if required) for some $x\in X$.
Then $(A,s)\mathrel{\rho_p} (B,s).$
\end{lemma}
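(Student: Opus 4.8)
The plan is to derive the relation directly from the single defining relation $(\lambda,\overline{x}^{+})\in N_p\subseteq \rho_p$, by relocating it to the vertex $v$ and then stripping off the second coordinate with the $^+$ operation. In the model this relation reads $\overline{x}^{+}=(\Gamma_{\overline{x}},\lambda)\mathrel{\rho_p}(\Gamma_{\lambda},\lambda)$, where the right-hand side is the identity element of $M^{\lambda}(X^*,X\cup\overline{X})$.

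First I would left-multiply this relation by the element $(A,v)$, which is a legitimate element of $M^{\lambda}(X^*,X\cup\overline{X})$ precisely because $v\in V(A)$ and $A$ is accessible. The key computation is $(A,v)\,\overline{x}^{+}=(A\cup v\Gamma_{\overline{x}},v)$; since the generator $\overline{x}$ acts as $x$ on the vertex set $X^*$, the translated graph $v\Gamma_{\overline{x}}$ consists of exactly the single edge $(v,\overline{x},vx)$ together with its endpoints, so $A\cup v\Gamma_{\overline{x}}=B$ and hence $(A,v)\,\overline{x}^{+}=(B,v)$. On the other side, multiplication by the identity gives $(A,v)(\Gamma_{\lambda},\lambda)=(A,v)$. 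As $\rho_p$ is a congruence, this already yields $(B,v)\mathrel{\rho_p}(A,v)$.

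Next I would apply the unary operation $^+$, which is likewise compatible with $\rho_p$: from $(B,v)\mathrel{\rho_p}(A,v)$ we obtain $(B,\lambda)=(B,v)^{+}\mathrel{\rho_p}(A,v)^{+}=(A,\lambda)$. Finally, right-multiplying by $(A,s)$ (here $s\in V(A)\subseteq V(B)$, so both products are defined) and using $(A,\lambda)(A,s)=(A,s)$ together with $(B,\lambda)(A,s)=(B,s)$ — the latter because $A\subseteq B$ — gives $(A,s)\mathrel{\rho_p}(B,s)$, as required.

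The substance of the argument lies entirely in the first step: the one genuine idea is to multiply by $(A,v)$ itself, rather than by a subgraph spanned by a path from the origin to $v$, which simultaneously places the new edge at the prescribed vertex $v$ and keeps all of $A$ inside the ambient graph, so that no separate ``inflation'' from a path subgraph to $A$ is needed. I expect the only point requiring care to be the bookkeeping in the graph computation $v\Gamma_{\overline{x}}=\{(v,\overline{x},vx)\}$ and the verification that $B$ is again a finite accessible subgraph, so that $(B,v)$ and $(B,s)$ are honest elements of the Gould expansion; both are immediate once one recalls that $\overline{x}$ maps to $x$ in $X^*$.
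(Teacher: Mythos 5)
Your argument is correct. Every step checks out: $(A,v)$ is a genuine element of $M^{\lambda}(X^*,X\cup\overline{X})$ because $v\in V(A)$; the relation $(\lambda,\overline{x}^{+})\in N_p$ reads $(\Gamma_{\lambda},\lambda)\mathrel{\rho_p}(\Gamma_{\overline{x}},\lambda)$ in the expansion; left multiplication by $(A,v)$ gives $(A,v)$ on one side and $(A\cup v\Gamma_{\overline{x}},v)=(B,v)$ on the other, since $\overline{x}$ has value $x$ in $X^*$ and hence $v\Gamma_{\overline{x}}$ is exactly the edge $(v,\overline{x},vx)$ with its endpoints; applying $^{+}$ and then right-multiplying by $(A,s)$, using $A\subseteq B$, yields $(A,s)\mathrel{\rho_p}(B,s)$. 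The accessibility of $B$ is clear since $vx$ is reachable through $v$.

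Your route is, however, genuinely different from the one in the paper. The paper disposes of this lemma in one sentence by appealing to the technique of its Lemma~\ref{lem:n26e}: one takes a decomposition of $(A,s)$ into generators via paths from the origin (Proposition~\ref{prop:M_generators}), locates a prefix whose value is $v$, and inserts the factor $\overline{x}^{+}$ there, so that $(B,s)$ arises from $(A,s)$ by a single application of the relation $(\lambda,\overline{x}^{+})$. You instead transport the relation to the vertex $v$ by multiplying by the element $(A,v)$ itself and then recover the second coordinate $s$ with a $^{+}$ and a right multiplication. What your version buys is self-containedness: it is a pure congruence computation inside the Gould expansion, with no appeal to word decompositions or to the earlier rewriting lemma, and it makes completely explicit why the new edge lands at $v$ rather than at the origin. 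What the paper's version buys is uniformity with the surrounding machinery, since the same decomposition-into-generators argument is reused for all the relation types in Sections~\ref{sec:F-restr}--\ref{sec:perfect}. Both are valid proofs of the same statement resting on the same defining relation of $N_p$.
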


\begin{proof}
As in the proof of Lemma \ref{lem:n26e}, it can be shown that $(B,s)$ is obtained from $(A,s)$ by an application of a rewriting rule $\lambda\to_{N_p}\overline{x}^+$. Since $\to_{N_p}$ generates $\rho_p$, it follows that $(A,s)\mathrel{\rho_p} (B,s)$.
\end{proof}

We now characterize when $(A,s)$ and $(B,t)$ 
are $\rho_p$-related. 

\begin{proposition}\label{prop:n28c}
Let $(A,s), (B,t) \in M^{\lambda}(X^*, X \cup \overline{X})$.  
Then $(A,s) \mathrel{\rho_p} (B,t)$ if and only if $s = t$ and the sets of edges of $A$ and $B$ labeled by elements of $X$ coincide.
\end{proposition}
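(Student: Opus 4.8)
The plan is to prove Proposition \ref{prop:n28c} by establishing the two directions separately, using the characterization of $\rho_p$ as the congruence generated by $N_p$ and the partial-order structure supplied by Proposition \ref{prop:properties}.

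\textbf{The easy direction.} First I would show that $(A,s)\mathrel{\rho_p}(B,t)$ implies $s=t$ and that $A$ and $B$ have the same $X$-labeled edges. The equality $s=t$ is immediate since $\rho_p\subseteq\sigma$ and $(A,s)\mathrel{\sigma}(B,t)$ forces $s=t$ by Proposition \ref{prop:properties}(3). For the edges, the natural strategy is to exhibit a $(\cdot\,,^+)$-morphism from $M^{\lambda}(X^*,X\cup\overline{X})$ onto some structure that forgets everything except the $X$-labeled edges (and $s$), and check that this morphism annihilates every generating relation in $N_p$. Concretely, I would map $(\Gamma,s)$ to the pair $(\Gamma',s)$ where $\Gamma'$ is obtained by deleting all $\overline{X}$-labeled edges; one then verifies directly that both relations $(x,x^+\overline{x})$ and $(\lambda,\overline{x}^+)$ are collapsed by this map, since adding or deleting a single $\overline{x}$-edge does not change the set of $X$-labeled edges. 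Because $\rho_p$ is the \emph{smallest} congruence containing $N_p$, it must lie in the kernel of this morphism, giving the claim.

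\textbf{The hard direction.} Conversely, suppose $s=t$ and $A,B$ have the same $X$-labeled edges; I must show $(A,s)\mathrel{\rho_p}(B,s)$. The key tool is Lemma \ref{lem:n28b}, which says that adjoining a single $\overline{x}$-edge at an existing vertex $v$ (landing at $vx$, which is automatically a vertex once $v$ is) does not change the $\rho_p$-class. So the plan is to define a canonical ``saturation'' $\widehat{A}$ of $A$: the graph with the same vertices and the same $X$-labeled edges as $A$, but with \emph{every} edge $(v,\overline{x},vx)$ adjoined whenever both $v$ and $vx$ are already vertices of $A$. Repeatedly applying Lemma \ref{lem:n28b} shows $(A,s)\mathrel{\rho_p}(\widehat{A},s)$, and likewise $(B,s)\mathrel{\rho_p}(\widehat{B},s)$; one should note the adjunctions keep the graph accessible so all intermediate pairs genuinely live in $M^{\lambda}$. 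Since $A$ and $B$ share the same $X$-edges, they share the same vertices (accessibility means every vertex lies on an $X$-labeled path from $\lambda$), hence $\widehat{A}=\widehat{B}$, and transitivity of $\rho_p$ yields $(A,s)\mathrel{\rho_p}(B,s)$.

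\textbf{Main obstacle.} The subtle point I expect to need care is the claim that $A$ and $B$ have the same \emph{vertices} once they have the same $X$-labeled edges. A vertex of an accessible subgraph of $\Cay(X^*,X\cup\overline{X})$ need not be reached by an $X$-labeled path a priori, since $\overline{X}$-edges also contribute to accessibility; however, every $\overline{x}$-edge $(v,\overline{x},vx)$ has the same terminal vertex $vx=v[x]_{X^*}$ as the corresponding $X$-edge, so one can argue that the vertex set is determined by the $X$-edge set together with the requirement that $s$ be a vertex. I would handle this by observing that if $w$ is a vertex of $A$ reachable only through some $\overline{x}$-edges, each such edge can be replaced by the parallel $X$-edge to reach the same vertex, so $w$ is still reached by a path whose successive terminal vertices all appear as terminal vertices of $X$-labeled edges already present. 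Making this replacement argument precise, and confirming that the saturation $\widehat{A}$ is well-defined and accessible, is the one place where the routine verifications must be carried out with attention.
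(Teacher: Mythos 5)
Your forward direction is sound and is essentially the paper's argument in different clothing: instead of tracking how each relation of $N_p$ acts on a factorization (the paper's Cases 1 and 2), you package the same computation as the statement that the map $(\Gamma,s)\mapsto(\Gamma',s)$, which deletes all $\overline{X}$-labeled edges, annihilates $N_p$; either way one only needs to observe that applying $(x,x^+\overline{x})$ or $(\lambda,\overline{x}^+)$ adds or removes a single $\overline{x}$-edge and never touches an $X$-labeled edge.

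The backward direction, however, has a genuine gap, and it sits exactly where you flagged the "main obstacle." Your saturation argument needs $V(A)=V(B)$, and your proposed justification --- that every vertex reached through an $\overline{x}$-edge can instead be reached through "the parallel $X$-edge" --- fails because that parallel $X$-edge need not belong to the graph. Concretely, take $A=\Gamma_{\overline{x}}$ (vertices $\lambda$ and $x$, single edge $(\lambda,\overline{x},x)$) and $B=\Gamma_{\lambda}$ (the one-vertex graph). Both have empty sets of $X$-labeled edges, and indeed $(A,\lambda)=\overline{x}^{+}\mathrel{\rho_p}\lambda=(B,\lambda)$ by the very relation $(\lambda,\overline{x}^{+})\in N_p$; yet $V(A)\neq V(B)$, and your saturations satisfy $\widehat{A}=A\neq B=\widehat{B}$, so your criterion would wrongly separate them. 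The underlying issue is that Lemma \ref{lem:n28b} allows adjoining an edge $(v,\overline{x},vx)$ whose terminal vertex $vx$ is \emph{new}, so the vertex set is not a $\rho_p$-class invariant (the paper remarks on precisely this after the proposition, noting there is no finite analogue of $A^{\wedge}$ here); by restricting your saturation to edges whose both endpoints already lie in $A$, you have discarded exactly the part of the lemma that makes the converse work. The repair is the paper's: pass to $A\cup B$, observe that it is obtained from $A$ by adjoining finitely many $\overline{X}$-labeled edges (added in an order following directed paths from $\lambda$ in $B$, so that each new edge starts at an existing vertex), conclude $(A,s)\mathrel{\rho_p}(A\cup B,s)$ from Lemma \ref{lem:n28b} in its full strength, and finish by symmetry and transitivity.
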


\begin{proof}
We first suppose that $(A,s) \mathrel{\rho_p} (B,t)$.
Since $\rho_p \subseteq \sigma$, we have that $s=t$. If $A=B$, the statement clearly holds. Suppose now that $A\neq B$. Since $\to_{N_p}$ generates $\rho_p$, there exists $n\geq 1$ and 
a sequence
$$
(A,s)=(A_0,s), (A_1,s),\ldots, (A_n,s)=(B,s),
$$
such that for all $i=0,\dots, n-1$ we have
$$(A_i,s) \to_{N_p} (A_{i+1},s) \quad {\text{ or }} \quad (A_{i+1},s) \to_{N_p} (A_i,s),
$$
where $A_{i+i}\neq A_i$ for all $i$.

Observe that an application of $x \to_{N_p} x^+\overline{x}$ or $\lambda \to_{N_p} \overline{x}^+$ results in adding an edge labeled by $\overline{x}$.
Since this does not affect edges labeled  by elements of $X$, we conclude that the edges of $A$ and $B$ labeled by elements of $X$ coincide.

We now suppose that the sets of edges of $A$ and $B$ labeled by elements of $X$ coincide.  
Since the graph $A \cup B$ has the same set of edges labeled by $X$ as $A$, it is obtained from $A$ by adding to it finitely many edges labeled by elements of $\overline{X}$.
Lemma \ref{lem:n28b}  implies that $(A,s) \mathrel{\rho_p} (A\cup B,s)$. By symmetry we also have $(B,s) \mathrel{\rho_p} (A\cup B,s)$. It follows that $(A,s) \mathrel{\rho_p} (B, s)$, as desired.
\end{proof}

We remark that if $v$ is a leaf of $A$ then $A$ does not have the edge $(v, \overline{x}, vx)$. By adding this edge to $A$ we obtain the graph $B$ which has one more vertex than $A$ such that $(A,s) \mathrel{\rho_p} (B,s)$ (as follows from Case 2 of the proof of Proposition \ref{prop:n28c}). This is in contrast with the situation of the previous two sections where the number of vertices of $A$ is the same for all the elements $(A,s)$ of a $\rho$-class (or a $\rho_s$-class). In particular, there is no analogue of $A^{\wedge}$ or $A^{\wedge_s}$ which would be a finite graph in the perfect case. 
 
To be able to provide a model for $M^{\lambda}(X^*, X \cup \overline{X})/\rho_p$ based on finite graphs, we make use of Proposition \ref{prop:n28c}. Let $(A,s)\in M^{\lambda}(X^*, X \cup \overline{X})$. We assign to $A$ its subgraph $\tilde{A}$ induced by the edges of $A$ labeled by $X$.
This is a finite but not necessarily connected subgraph of $\Cay(X^*,X)$ without isolated vertices (and which not necessarily contains the origin). Proposition \ref{prop:n28c} implies the following. 

\begin{corollary}\label{cor:m28}
Let $(A,s), (B,t) \in M^{\lambda}(X^*, X \cup \overline{X})$. Then $(A,s) \mathrel{\rho_p} (B,t)$ if and only if $s=t$ and $\tilde{A} = \tilde{B}$.
\end{corollary}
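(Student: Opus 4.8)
The plan is to derive this Corollary directly from Proposition~\ref{prop:n28c}, which already characterizes $\rho_p$ in terms of the sets of $X$-labeled edges. Since Proposition~\ref{prop:n28c} tells us that $(A,s)\mathrel{\rho_p}(B,t)$ if and only if $s=t$ and the sets of edges of $A$ and $B$ labeled by elements of $X$ coincide, it suffices to show that the condition ``the $X$-labeled edge sets of $A$ and $B$ coincide'' is equivalent to the condition ``$\tilde A=\tilde B$''. Everything then reduces to understanding how the graph $\tilde A$ encodes exactly the $X$-labeled edges of $A$.

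First I would unwind the definition of $\tilde A$: by construction $\tilde A$ is the subgraph of $\Cay(X^*,X)$ induced by the edges of $A$ labeled by elements of $X$, with isolated vertices discarded. Consequently the edge set $E(\tilde A)$ is precisely the set of $X$-labeled edges of $A$, and the vertex set $V(\tilde A)$ consists exactly of the initial and terminal vertices of these edges. The crucial observation is that, because $\tilde A$ has no isolated vertices by construction, its vertex set is completely determined by its edge set, namely $V(\tilde A)=\{\alpha(e),\omega(e)\colon e\in E(\tilde A)\}$.

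From this the equivalence is immediate in both directions. If the $X$-labeled edge sets of $A$ and $B$ coincide, then $E(\tilde A)=E(\tilde B)$, and since the vertex sets are recovered as the endpoints of these common edges, $V(\tilde A)=V(\tilde B)$ as well, so $\tilde A=\tilde B$ as labeled directed graphs. Conversely, if $\tilde A=\tilde B$ then in particular $E(\tilde A)=E(\tilde B)$, which is exactly the statement that the $X$-labeled edge sets of $A$ and $B$ agree. Combining this equivalence with Proposition~\ref{prop:n28c} yields the claim.

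I do not anticipate a genuine obstacle here, as the statement is essentially a reformulation of Proposition~\ref{prop:n28c}. The only point requiring a small amount of care is the bookkeeping involving vertices: one must confirm that equality of the labeled graphs $\tilde A$ and $\tilde B$ (which formally includes equality of vertex sets, not merely edge sets) is not strictly stronger than equality of the $X$-labeled edge sets. This is precisely where the ``no isolated vertices'' clause in the definition of $\tilde A$ is used, since it guarantees that the vertex set is a function of the edge set and hence adds no extra constraint.
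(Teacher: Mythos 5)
Your proposal is correct and matches the paper's reasoning: the paper states this corollary as an immediate consequence of Proposition~\ref{prop:n28c}, since $\tilde{A}$ is by definition the subgraph induced by the $X$-labeled edges of $A$ and, having no isolated vertices, is determined by that edge set. Your extra care about vertex sets being recovered from edge sets is a reasonable (if routine) point that the paper leaves implicit.
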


Let $\overline{\mathcal X}_X$ be the semilattice of all finite but not necessarily connected subgraphs of $\Cay(X^*, X)$ without isolated vertices (including the empty subgraph  $\varnothing$). Setting
\begin{equation}
{\mathsf{Perf}}(X)=\{(\Gamma,s)\colon \Gamma \in \overline{\mathcal X}_X\},
\end{equation}
the map $(\Gamma, s)\mapsto (\tilde{\Gamma},s)$ is a surjection from  $M^{\lambda}(X^*, X \cup \overline{X})$ to ${\mathsf{Perf}}(X)$. Due to Corollary \ref{cor:m28} it gives rise to a well-defined bijection $M^{\lambda}(X^*, X \cup \overline{X})/\rho_p  \to {\mathsf{Perf}}(X)$. We define the operations on ${\mathsf{Perf}}(X)$ so that they agree with those on $M^{\lambda}(X^*, X \cup \overline{X})/\rho_p$, as follows:
\begin{equation*}
(A,s)(B,t) = (A\cup sB,st), \quad (A,s)^+ = (A,\lambda), \quad \mx{(A,s)}=(\varnothing,s),
\end{equation*}
and the identity element is $(\varnothing,\lambda)$. 
We obtain the following result, which provides a model for ${\mathsf{FFR}}_p(X)$ based on the Cayley graph $\Cay(X^*,X)$.

\begin{theorem}\label{thm:modelp}
The map $M^{\lambda}(X^*,X\cup \overline{X})/\rho_p\to{\mathsf{Perf}}(X),\,\, [(\Gamma,s)]_{\rho_p}\mapsto (\tilde{\Gamma},s)$ is an $X$-canonical $(\cdot\,,^+,\mx{},\lambda)$-isomorphism. It follows that ${\mathsf{Perf}}(X)$ is $X$-canonically $(\cdot\,,^+,\mx{},\lambda)$-isomorphic to ${\mathsf{FFR}}_p(X)$.
\end{theorem}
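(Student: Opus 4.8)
The plan is to verify that the map $\Phi\colon M^{\lambda}(X^*,X\cup\overline{X})/\rho_p\to{\mathsf{Perf}}(X)$, $[(\Gamma,s)]_{\rho_p}\mapsto(\tilde\Gamma,s)$, is a well-defined bijection that commutes with the assignment maps and preserves each of the operations $\cdot$, $^+$, $\mx{}$ and the constant $\lambda$. Well-definedness and injectivity are immediate from the Corollary preceding the theorem, which states that $(A,s)\mathrel{\rho_p}(B,t)$ holds exactly when $s=t$ and $\tilde A=\tilde B$. For surjectivity (already noted above), given $(\Gamma,s)\in{\mathsf{Perf}}(X)$ I would build an accessible preimage by adjoining to $\Gamma$, for every vertex $y_1\cdots y_k$ of $\Gamma$ as well as for $s$, the $\overline{X}$-labeled path $\lambda\xrightarrow{\overline{y_1}}y_1\xrightarrow{\overline{y_2}}\cdots\to y_1\cdots y_k$ from the origin; the resulting graph $A$ is accessible with $s\in V(A)$, and since only $\overline{X}$-edges were added we have $\tilde A=\Gamma$ (the auxiliary vertices carry no $X$-edge and are discarded as isolated vertices).

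The essential point is multiplicativity, for which I would establish the graph identity
$$
\widetilde{A\cup sB}=\tilde A\cup s\tilde B .
$$
This gives $\Phi\bigl([(A,s)]_{\rho_p}[(B,t)]_{\rho_p}\bigr)=(\widetilde{A\cup sB},st)=(\tilde A\cup s\tilde B,st)=(\tilde A,s)(\tilde B,t)$, as required. The identity itself follows by bookkeeping: the $X$-labeled edges of $A\cup sB$ are those of $A$ together with $s\cdot(\text{$X$-edges of }B)$, the latter being precisely the edges of $s\tilde B$; and a vertex is incident to some $X$-edge of $A\cup sB$ iff it is incident to an $X$-edge of $A$ or of $sB$, so that the vertex sets after deleting isolated vertices agree with $V(\tilde A)\cup V(s\tilde B)$. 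The unary operation $^+$ and the constant transfer at once: $[(A,s)]_{\rho_p}^{+}=[(A,\lambda)]_{\rho_p}$ maps to $(\tilde A,\lambda)=(\tilde A,s)^{+}$, while the identity $(\Gamma_\lambda,\lambda)$ maps to $(\varnothing,\lambda)$.

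For $\mx{}$, write $z=[(A,s)]_{\rho_p}$ and let $s=x_1\cdots x_n$. Since the quotient is a perfect $F$-restriction monoid, the maximum element of the $\sigma$-class of $z$ is $\overline{s}=\overline{x_1}\cdots\overline{x_n}$ by the perfect identity; lifting this product to $M^{\lambda}(X^*,X\cup\overline{X})$ yields a pair $(C,s)$ whose graph $C$ is an $\overline{X}$-labeled path, so $\tilde C=\varnothing$. Hence $\Phi(\mx{z})=(\varnothing,s)=\mx{(\tilde A,s)}$ (and when $s=\lambda$ the maximum element is the identity, mapping again to $(\varnothing,\lambda)$). Finally, $\Phi$ is $X$-canonical because the generator $x$, represented by $[(\Gamma_x,x)]_{\rho_p}$, is sent to $(\widetilde{\Gamma_x},x)=(\Gamma_x,x)$, the corresponding generator of ${\mathsf{Perf}}(X)$. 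Composing $\Phi$ with the canonical isomorphism $M^{\lambda}(X^*,X\cup\overline{X})/\rho_p\simeq{\mathsf{FFR}}_p(X)$ established earlier then gives the asserted isomorphism ${\mathsf{Perf}}(X)\simeq{\mathsf{FFR}}_p(X)$. I expect the multiplicativity identity $\widetilde{A\cup sB}=\tilde A\cup s\tilde B$ — in particular the careful handling of isolated vertices under both the union and the shift by $s$ — to be the main, albeit routine, obstacle.
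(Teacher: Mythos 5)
Your proposal is correct and follows essentially the same route as the paper, which states the theorem as an immediate consequence of the corollary to Proposition~\ref{prop:n28c} (giving well-definedness and bijectivity) and of the operations on ${\mathsf{Perf}}(X)$ being defined as the push-forwards of those on the quotient. Your explicit verifications — the surjectivity construction via $\overline{X}$-labeled paths, the identity $\widetilde{A\cup sB}=\tilde A\cup s\tilde B$, and the computation of $\mx{}$ via the perfect identity — correctly fill in the routine details the paper leaves implicit.
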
 

The definition of ${\mathsf{Perf}}(X)$ (or Theorem \ref{th:perf_structure}) yields that it decomposes as the semidirect product $\overline{\mathcal{X}}_X \rtimes X^*$, where the action of $X^*$ on $\overline{\mathcal{X}}_X$ is given by left translation: $u\cdot \Gamma = u\Gamma$ for all $u\in X^*$ and $\Gamma \in \overline{\mathcal{X}}_X$.

We now turn to the word problem for ${\mathsf{FFR}}_p(X)$.

\begin{theorem}\label{thm:wordp}
The word problem for $\mathsf{FFR}_p(X)$ is decidable with an algorithm of the time complexity $O(n^2)$, where $n = {\mathrm{max}}\{{\mathsf{c}}(u), {\mathsf{c}}(v)\}$ and $u,v\in {\mathbb T}(X)$ are the input terms.
\end{theorem}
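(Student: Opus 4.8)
The plan is to reuse the reduction to the Cayley-graph model employed in the proofs of Theorems~\ref{thm:word} and~\ref{thm:words}, now based on $M^{\lambda}(X^*, X\cup\overline{X})$ together with the isomorphism $\mathsf{FFR}_p(X)\simeq{\mathsf{Perf}}(X)$ of Theorem~\ref{thm:modelp}. Given a term $u\in{\mathbb T}(X)$, I first produce a term $\tilde u$ in the signature $(\cdot\,,^+)$ over the alphabet $X\cup\overline{X}$ as follows. Since the value of $\mx{v}$ depends only on $[v]_{X^*}$, I locate the maximal subterms of the form $\mx{v}$; for each of them I compute $[v]_{X^*}=x_1\cdots x_k$ and, invoking Lemma~\ref{lem:p1} (equivalently, the identity $\mx{(xy)}=\mx{x}\,\mx{y}$), replace $\mx{v}$ by the product $\overline{x_1}\cdots\overline{x_k}$, and by the identity when $[v]_{X^*}=\lambda$; the constant $\lambda$ is likewise replaced by the identity of $M^{\lambda}(X^*,X\cup\overline{X})$. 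I then assign to $\tilde u$ its value $(\Gamma_{\tilde u},[\tilde u]_{X^*})$ in $M^{\lambda}(X^*,X\cup\overline{X})$.

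By Theorem~\ref{thm:modelp} the value of $u$ in $\mathsf{FFR}_p(X)$ is the image $(\widetilde{\Gamma_{\tilde u}},[\tilde u]_{X^*})\in{\mathsf{Perf}}(X)$, where $\widetilde{\Gamma_{\tilde u}}$ is the subgraph of $\Gamma_{\tilde u}$ induced by the edges labelled by $X$, with isolated vertices removed. Proposition~\ref{prop:n28c} guarantees that this $X$-labelled part is a complete invariant of the $\rho_p$-class, so two terms $u,v$ represent the same element of $\mathsf{FFR}_p(X)$ if and only if $[\tilde u]_{X^*}=[\tilde v]_{X^*}$ and $\widetilde{\Gamma_{\tilde u}}=\widetilde{\Gamma_{\tilde v}}$. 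In contrast with the situation of Section~\ref{sec:F-restr}, one here \emph{discards} the $\overline{X}$-labelled edges rather than completing the graph, so no analogue of the closure $\Gamma^{\wedge}$ is needed; this is exactly what brings the running time down from cubic to quadratic.

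For the complexity, the maximal $\mx{}$-subterms are pairwise disjoint in the parse tree of $u$, so $\sum_i |[v_i]_{X^*}|\le\sum_i{\mathsf{c}}(v_i)\le{\mathsf{c}}(u)=n$; hence $\tilde u$ has length $O(n)$ and is produced in $O(n)$ steps, exactly as in the proof of Theorem~\ref{thm:word}. Consequently $\Gamma_{\tilde u}$, and a fortiori $\widetilde{\Gamma_{\tilde u}}$, has $O(n)$ vertices and $O(n)$ edges, and is extracted in $O(n)$ further steps. Comparing the words $[\tilde u]_{X^*}$ and $[\tilde v]_{X^*}$ takes $O(n)$ steps, while comparing the graphs $\widetilde{\Gamma_{\tilde u}}$ and $\widetilde{\Gamma_{\tilde v}}$, each edge of one being searched for among the $O(n)$ edges of the other, takes $O(n^2)$ steps, just as in the proof of Theorem~\ref{thm:words}. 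The total is $O(n^2)$.

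I expect the only genuinely delicate point to be the bookkeeping that keeps $\tilde u$ of linear size while the reduction remains correct: because the relevant alphabet is $\overline{X}$ rather than $\overline{X^*}$, the subterm $\mx{v}$ no longer collapses to a single generator but expands to the word $\overline{x_1}\cdots\overline{x_k}$, so one must verify both that this expansion is legitimate (Lemma~\ref{lem:p1}) and that the disjointness of the maximal $\mx{}$-subterms prevents it from producing a super-linear blow-up. The remaining steps are routine adaptations of Theorems~\ref{thm:word} and~\ref{thm:words} and of Theorem~\ref{thm:modelp}, together with the handling of possibly disconnected subgraphs not containing the origin, which the model ${\mathsf{Perf}}(X)$ accommodates.
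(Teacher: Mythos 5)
Your proposal is correct and follows essentially the same route as the paper: reduce to the model ${\mathsf{Perf}}(X)$ by expanding each $\mx{v}$ into $\overline{x_1}\cdots\overline{x_k}$ with $x_1\cdots x_k=[v]_{X^*}$, evaluate in $M^{\lambda}(X^*,X\cup\overline{X})$, keep only the $X$-labelled edges via Proposition~\ref{prop:n28c}, and compare the resulting $O(n)$-size graphs in $O(n^2)$ steps. Your explicit remark that the maximal $\mx{}$-subterms are disjoint, so the expansion stays linear, is a point the paper leaves implicit but is entirely consistent with its argument.
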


\begin{proof}
Let $u\in {\mathbb T}(X)$ be a term over $X$ in the signature $(\cdot\,,^+,\mx{},\lambda)$. Suppose that $\mx{v}$, where $v\in {\mathbb T}(X)$, occurs as a subterm of $u$.  
If $[v]_{X^*} = \lambda$, then the value of $\mx{v}$ in ${\mathsf{FFR}}_p(X)$ is $\lambda$. If $[v]_{X^*}=x_1\cdots x_n$, where $n\geq 1$, then $\mx{v}$ and $\mx{x_1}\cdots \mx{x_n}$ have the same value in ${\mathsf{FFR}}_p(X)$. We replace $u$ by another term with the same value in ${\mathsf{FFR}}_p(X)$ by updating a subterm of the form $\mx{v}$ in the described way (if $u$ has such a subterm). In $O(n)$ steps we obtain a term  $u' \in {\mathbb T}(X)$ in which the operation $\mx{}$ is applied only to generators from $X$ and which has the same value in ${\mathsf{FFR}}_p(X)$ as $u$. We now create a new term, $\tilde{u}$, in the signature $(\cdot\,,^+,\lambda)$ over the alphabet $X\cup \overline{X}$ by replacing every occurrence of $\mx{x}$, where $x\in X$, by $\overline{x}$. We then assign to $\tilde{u}$ its value $(\Gamma_{\tilde{u}}, [\tilde{u}]_{X^*})$ in $M^ \lambda(X^*, X\cup \overline{X})$.

Similarly as in the proof of Theorem \ref{thm:word}, we need at most $O(n)$ steps to compare $[\tilde{u}]_{X^*}$ with $[\tilde{v}]_{X^*}$. We also need $O(n)$ steps to obtain the graph $
\Gamma_{\tilde{u}}$ and then again at most $O(n)$ steps to obtain $\tilde{\Gamma}_{\tilde{u}}$.
Since the latter graph has $O(n)$ edges, we can compare if $\tilde{\Gamma}_{\tilde{u}}$ and $\tilde{\Gamma}_{\tilde{v}}$ coincide in $O(n^2)$ steps.
\end{proof}

\begin{remark} 
Let $E$ be the semilattice of all finite but not necessarily connected subgraphs of the Cayley graph $\Cay({\mathsf{FG}}(X),X)$ without isolated vertices. Since ${\mathsf{Perf}}(X) \simeq \overline{\mathcal{X}}_X \rtimes X^*$, it   is contained in $E \rtimes {\mathsf{FG}}(X)$, which is the model of the free perfect $F$-inverse monoid ${\mathsf{FFI}}_p(X)$, see\cite{AKSz21}. Therefore,  ${\mathsf{FFR}}_p(X)$ $X$-canonically $(\cdot\,,^+,\mx{},\lambda)$-embeds into ${\mathsf{FFI}}_p(X)$.
\end{remark}

We conclude this section by a brief discussion of various approaches to the notion for the perfect $F$-birestriction monoid. 

\begin{remark} \label{rem:p} In our work \cite{KLF25} we defined an $F$-birestriction monoid $R$ to be perfect if it satisfies the identity $\mx{x}\mx{y} = \mx{(xy)}$ which says that the product of two $\sigma$-classes is a whole $\sigma$-class. Note that for $F$-inverse monoids (see \cite{AKSz21,KLF24}) this condition automatically implies that the inverse of a $\sigma$-class is a whole $\sigma$-class. Since $x^+ = xx^{-1}$ and $x^* = x^{-1}x$, it follows that the $^+$ and the $^*$ of a $\sigma$-class is a whole $\sigma$-class. It is readily seen that our model of the free perfect $F$-birestriction monoid given in \cite{KLF25} does not satisfy the conditions $(\mx{x})^+ = \mx{(x^+)}=\lambda$ and $(\mx{x})^* = \mx{(x^*)}=\lambda$. It seems, however, natural to call an $F$-birestriction monoid {\em perfect} if these two conditions are imposed. The version of the perfect $F$-birestriction monoids of \cite{KLF25} is then naturally termed {\em weakly perfect}.  Denoting by ${\mathsf{FFBR}}_{p}(X)$  the free $X$-generated perfect $F$-birestriction monoid (with the identities $(\mx{x})^+ = \mx{(x^+)}=\lambda$ and $(\mx{x})^* = \mx{(x^*)}=\lambda$ included in the definition), one can show that ${\mathsf{FFBR}}_{p}(X)$ is isomorphic to the semidirect product $E \rtimes X^*$ (where $E$ is the same as in the previous remark).
\end{remark}

We obtain a series of inclusions $
\overline{\mathcal{X}}_X \rtimes X^* \subseteq E\rtimes X^* \subseteq E \rtimes {\mathsf{FG}}(X)$,
which illustrates the $X$-canonical embeddings
$$
{\mathsf{FFR}}_p(X) \hookrightarrow {\mathsf{FFBR}}_{p}(X) \hookrightarrow {\mathsf{FFI}}_p(X).
$$

\section{The free \texorpdfstring{$X$}{X}-generated \texorpdfstring{$F$}{F}-restriction semigroup and its strong and perfect analogues are left ample}\label{sec:ample}

\begin{definition}
A restriction semigroup will be called {\em left ample}\footnote{We retain the term left ample for consistency with the literature, see \cite{GG00,G96}.}
if it satisfies the quasi-identity
\begin{equation}\label{eq:ample}
xz = yz \;\rightarrow\; xz^{+} = yz^{+}.    
\end{equation}    
\end{definition}

\begin{proposition}\label{prop:d4a}
${\mathsf{FFR}}(X)$, ${\mathsf{FFR}}_s(X)$ and ${\mathsf{FFR}}_p(X)$ are left ample.   
\end{proposition}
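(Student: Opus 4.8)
The plan is to verify the left ample quasi-identity \eqref{eq:ample} directly in each of the three models constructed in Sections~\ref{sec:F-restr}, \ref{sec:strong} and \ref{sec:perfect}. Since all three free objects are realized as quotients of a Gould expansion $M(X^*, X\cup \overline{X^*})$ (or $M^{\lambda}(X^*, X\cup \overline{X})$ in the perfect case), and since Proposition~\ref{prop:properties}(4) tells us that the Gould expansion itself is a proper restriction (semi)group, the cleanest route is to first establish that every Gould expansion $M(S,X)$ and $M^{\lambda}(S,X)$ is left ample, and then transport this property to the quotients via the explicit models.

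First I would prove the base case. Take $(A,s),(B,t),(C,u)$ in $M(S,X)$ and suppose $(A,s)(C,u) = (B,t)(C,u)$, that is $(A\cup sC, su) = (B\cup tC, tu)$. The second coordinate gives $su = tu$, and since $S$ is a free-monoid quotient one cancels (or, more robustly, one simply reasons with the first coordinate) to handle the $\sigma$-part. The first coordinate gives $A\cup sC = B\cup tC$. We must show $(A,s)(C,u)^+ = (B,t)(C,u)^+$, i.e. $(A\cup sC, s) = (B\cup tC, t)$; the second coordinates already agree once we know $s=t$, so the content is the equality of the vertex sets and edge sets, which is immediate from $A\cup sC = B\cup tC$. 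The only genuine point to nail down is that $su=tu$ forces $s=t$: this holds because in a Cayley graph of $X^*$ the vertices are elements of the free monoid, and right multiplication by $u$ is injective on $X^*$. Hence $M(S,X)$ and $M^{\lambda}(S,X)$ satisfy \eqref{eq:ample}.

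Next I would note that the left ample quasi-identity is inherited by the three free objects because in each case the model gives a faithful concrete description of the quotient in which the operations are computed by the same graph-union formula followed by a closure operator ($\Gamma\mapsto\Gamma^{\wedge}$, $\Gamma\mapsto\Gamma^{\wedge_s}$, or $\Gamma\mapsto\tilde\Gamma$). Concretely, working in the model of Proposition~\ref{prop:model}, suppose $(\Gamma,s)(\Lambda,u) = (\Delta,t)(\Lambda,u)$, i.e. $((\Gamma\cup s\Lambda)^{\wedge}, su) = ((\Delta\cup t\Lambda)^{\wedge}, tu)$. As above $su=tu$ yields $s=t$, and the equality of closed graphs gives $(\Gamma\cup s\Lambda)^{\wedge} = (\Delta\cup t\Lambda)^{\wedge}$. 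Then $(\Gamma,s)(\Lambda,u)^+ = ((\Gamma\cup s\Lambda)^{\wedge}, s)$ and $(\Delta,t)(\Lambda,u)^+ = ((\Delta\cup t\Lambda)^{\wedge}, t)$ coincide, as required. The identical computation works verbatim with $^{\wedge_s}$ for ${\mathsf{FFR}}_s(X)$ via Proposition~\ref{prop:models}, and with the induced-subgraph operator $\Gamma\mapsto\tilde\Gamma$ for ${\mathsf{FFR}}_p(X)$ via Theorem~\ref{thm:modelp}, where $(A,s)(B,u) = (A\cup sB, su)$ and $(A,s)(B,u)^+ = (A\cup sB, s)$ are computed by the same formulas.

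The step I expect to require the most care is confirming that the closure operators do not spoil the argument: one needs that the second coordinate $su=tu$ genuinely forces $s=t$ (so that the $^{+}$-truncated products land in the same $\sigma$-class), and that equality of the two full products as closed graphs is preserved when we truncate the second coordinate to its left-identity witness. Both facts are transparent once the products are written out, since the first coordinates of the product and of the product-times-$z^{+}$ are literally the same graph and the closure operator is applied to that same graph. Thus the main obstacle is purely bookkeeping, and no essentially new idea beyond the base-case verification is needed. I would therefore present the proof as: establish \eqref{eq:ample} for the Gould expansion, then observe it passes to each model by the displayed one-line computations.
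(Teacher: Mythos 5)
Your proposal is correct and follows essentially the same route as the paper: verify the quasi-identity \eqref{eq:ample} directly in the graph models, using right cancellativity of $X^*$ to deduce $s=t$ from $su=tu$ and observing that the first coordinates of $xz$ and $xz^{+}$ are literally the same (closed) graph. The only caveat is your opening claim that \emph{every} Gould expansion $M(S,X)$ is left ample, which fails for non-right-cancellative $S$; this does not affect your argument, since the verification you actually carry out uses only $S=X^*$.
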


\begin{proof} We prove the statement for 
${\mathsf{FFR}}(X)$, the arguments for ${\mathsf{FFR}}_s(X)$ and ${\mathsf{FFR}}_p(X)$ being similar.
We work with the canonically isomorphic copy $M(X^*, X\cup \overline{X^*})/\rho$ of ${\mathsf{FFR}}(X)$. 
Its elements are canonical representatives of $\rho$-classes of the form $(A,a)$ where $A$ is a $\rho$-closed subgraph of $\Cay(X^*,X\cup \overline{X^*})$. 
We need to show that $M(X^*, X\cup \overline{X^*})/\rho$ satisfies the quasi-identity \eqref{eq:ample}.
Let $(A,a),(B,b),(C,c)\in M(X^*, X\cup \overline{X^*})/\rho$ and suppose $(A,a)(C,c)=(B,b)(C,c)$. By the definition of multiplication in Proposition \ref{prop:model}, this implies that $(A\cup aC)^{\wedge}=(B\cup bC)^{\wedge}$ and $ac=bc$. Then $a=b$ and 
$$
(A,a)(C,c)^+= ((A\cup aC)^{\wedge},a)=((B\cup bC)^{\wedge},b) = (B,b)(C,c)^+,
$$
as required.
\end{proof}

Let ${\mathbf{FR}}^a$ be the quasi-variety in the signature $(\cdot\,,^+,\mx{},\lambda)$ defined by all the identities which define the variety ${\mathbf{FR}}$ of $F$-restriction semigroups with the maximum projection $\lambda$ and the additional quasi-identity \eqref{eq:ample}. Similarly, let ${\mathbf{FR}}_s^a$ be the quasi-variety ${\mathbf{FR}}^a \cap {\mathbf{FR}}_s$ and ${\mathbf{FR}}_p^a$ be the quasi-variety ${\mathbf{FR}}^a \cap {\mathbf{FR}}_p$.

\begin{proposition}\label{prop:ample}
The following statements hold:
\begin{enumerate}
\item The free $X$-generated object of the quasi-variety ${\mathbf{FR}}^a$ coincides with ${\mathsf{FFR}}(X)$.
\item The free $X$-generated object of the quasi-variety ${\mathbf{FR}}_s^a$ coincides with ${\mathsf{FFR}}_s(X)$. 
\item The free $X$-generated object of the quasi-variety ${\mathbf{FR}}_p^a$ coincides with ${\mathsf{FFR}}_p(X)$.
\end{enumerate}
\end{proposition}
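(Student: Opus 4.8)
The plan is to derive all three statements from a single soft observation about free objects, proving (1) carefully and noting that (2) and (3) follow by the identical argument with the subscripts $s$ and $p$ attached throughout. The guiding principle I would invoke is the following: if $\mathcal{K}' \subseteq \mathcal{K}$ are two classes of algebras of the same signature, both admitting free $X$-generated objects, and if the free $X$-generated object of $\mathcal{K}$ happens to belong to $\mathcal{K}'$, then it is automatically the free $X$-generated object of $\mathcal{K}'$ as well.

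First I would record the two routine prerequisites. On the one hand, ${\mathbf{FR}}^a$ is defined by the identities defining ${\mathbf{FR}}$ together with the single extra quasi-identity \eqref{eq:ample}, so every algebra in ${\mathbf{FR}}^a$ lies in ${\mathbf{FR}}$; that is, ${\mathbf{FR}}^a \subseteq {\mathbf{FR}}$. On the other hand, ${\mathbf{FR}}^a$ is a quasi-variety, hence closed under $\mathbb{S}$ and $\mathbb{P}$, and therefore admits a free $X$-generated object by the standard theory of quasi-varieties. Neither of these points requires calculation.

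The heart of the argument is the universal-property check. Let $A \in {\mathbf{FR}}^a$ be arbitrary, together with an assignment map $\iota \colon X \to A$. Since $A \in {\mathbf{FR}}^a \subseteq {\mathbf{FR}}$, the universal property of ${\mathsf{FFR}}(X)$ as the free object of ${\mathbf{FR}}$ supplies a unique $(\cdot\,,^+,\mx{},\lambda)$-morphism ${\mathsf{FFR}}(X) \to A$ extending $\iota$. This is exactly the universal property demanded of the free object of ${\mathbf{FR}}^a$ --- provided we know that ${\mathsf{FFR}}(X)$ itself is a member of ${\mathbf{FR}}^a$. But this membership is precisely Proposition \ref{prop:d4a}, which asserts that ${\mathsf{FFR}}(X)$ is left ample and hence satisfies \eqref{eq:ample}. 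Since free objects in a given class are unique up to canonical isomorphism, it follows that ${\mathsf{FFR}}(X)$ coincides with the free $X$-generated object of ${\mathbf{FR}}^a$, proving (1).

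I do not anticipate any genuine obstacle: the entire weight of the proposition rests on Proposition \ref{prop:d4a}, which is already established, and everything else is the formal manipulation of universal properties. The only care needed is bookkeeping, namely the class inclusion ${\mathbf{FR}}^a \subseteq {\mathbf{FR}}$ and the existence of free objects in a quasi-variety. For (2) and (3) I would repeat the argument verbatim, replacing $({\mathbf{FR}}, {\mathbf{FR}}^a, {\mathsf{FFR}}(X))$ by $({\mathbf{FR}}_s, {\mathbf{FR}}_s^a, {\mathsf{FFR}}_s(X))$ and by $({\mathbf{FR}}_p, {\mathbf{FR}}_p^a, {\mathsf{FFR}}_p(X))$ respectively, invoking the corresponding cases of Proposition \ref{prop:d4a}.
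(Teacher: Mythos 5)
Your proposal is correct and rests on exactly the same pivot as the paper's proof, namely Proposition \ref{prop:d4a}; the paper merely phrases the formal step as ``$F_X$ and ${\mathsf{FFR}}(X)$ are canonical quotients of each other, hence coincide,'' while you verify the universal property of the free object of ${\mathbf{FR}}^a$ directly, which is the same argument in different clothing.
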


\begin{proof}
(1) Let $F_X$ be the free $X$-generated object of the quasi-variety ${\mathbf{FR}}^a$. By definition, we have that $F_X$ is a canonical quotient of ${\mathsf{FFR}}(X)$. By Proposition \ref{prop:d4a} we have that
${\mathsf{FFR}}(X)$ is left ample, so that ${\mathsf{FFR}}(X)$ is a canonical quotient of~$F_X$. The statement follows.

Parts (2) and (3) follow similarly.
\end{proof}

\section*{Acknowledgments}
We are grateful to the referee for the careful reading of the paper and for the constructive suggestions that helped us to improve the exposition.

\footnotesize
\def\bibspacing{-3pt}

\end{document}